\documentclass[11pt]{amsart}
\usepackage[utf8]{inputenc}
\usepackage{enumerate,amssymb,xcolor,comment}
\bibliographystyle{plain}
\usepackage[a4paper, footskip=1cm, headheight = 16pt, top=3.7cm, bottom=3cm,  right=2.5cm,  left=2.5cm ]{geometry}
\usepackage[colorlinks,linkcolor=blue,citecolor=red]{hyperref}
\usepackage{thm-restate}

\newtheorem{question}{Question}[section]
\newtheorem{lemma}[question]{Lemma}
\newtheorem{theorem}[question]{Theorem}
\newtheorem{conjecture}[question]{Conjecture}

\usepackage[foot]{amsaddr}

\usepackage[T1]{fontenc}

\usepackage[leqno]{amsmath}

\makeatletter
\newcommand{\leqnomode}{\tagsleft@true}
\newcommand{\reqnomode}{\tagsleft@false}
\makeatother

\usepackage{latexsym}
\usepackage{amsfonts}

\usepackage{tikz}
\usepackage{float}
\usepackage{lmodern}


\def\dd{\hbox{-}}

\usepackage{marvosym}               


\DeclareMathOperator{\tw}{tw}

\DeclareMathOperator{\Hub}{Hub}

\newcounter{tbox}

\newcommand{\sta}[1]{\vspace*{0.3cm}\refstepcounter{tbox}\noindent{ \parbox{\textwidth}{(\thetbox) \emph{#1}}}\vspace*{0.3cm}}

\newcommand{\mylongtitle}[1]{%
  \ifodd\value{page}%
    \protect\parbox{0.97\linewidth}{#1}\hfill%
  \else%
    \hfill\protect\parbox{0.97\linewidth}{#1}%
  \fi%
}

\renewcommand{\S}{\mathcal{S}}

\makeatletter
\newcommand{\otherlabel}[2]{\protected@edef\@currentlabel{#2}\label{#1}}
\makeatother

\mathchardef\mh="2D


\title[Induced subgraphs and tree decompositions IV.]{Induced subgraphs and tree decompositions IV.\\(Even hole, diamond, pyramid)-free graphs}

\author{Tara Abrishami$^{\ast \dagger}$}
\author{Maria Chudnovsky$^{\ast \dagger}$}
\author{Sepehr Hajebi $^{\mathsection}$}
\author{Sophie Spirkl$^{\mathsection \parallel}$}
\address{$^{\ast}$Princeton University, Princeton, NJ, USA}
\address{$^{\mathsection}$Department of Combinatorics and Optimization, University of Waterloo, Waterloo, Ontario, Canada}
\address{$^{\dagger}$ Supported by NSF Grant DMS-1763817 and
     NSF-EPSRC Grant DMS-2120644.}
\address{$^{\parallel}$ We acknowledge the support of the Natural Sciences and Engineering Research Council of Canada (NSERC), [funding reference number RGPIN-2020-03912].
Cette recherche a \'et\'e financ\'ee par le Conseil de recherches en sciences naturelles et en g\'enie du Canada (CRSNG), [num\'ero de r\'ef\'erence RGPIN-2020-03912].}

\date {\today}

\allowdisplaybreaks

\begin{document}

\maketitle
\begin{abstract}
A \textit{hole} in a graph $G$ is an induced cycle of length at least four, and an \textit{even hole} is a hole of even length. The \textit{diamond} is the graph obtained from the complete graph $K_4$ by removing an edge. A \textit{pyramid} is a graph consisting of a triangle called the \textit{base}, a vertex called the \textit{apex}, and three internally disjoint paths starting at the apex and disjoint otherwise, each joining the apex to a vertex of the base. For a family $\mathcal{H}$ of graphs, we say a graph $G$ is $\mathcal{H}$-\textit{free} if no induced subgraph of $G$ is isomorphic to a member of $\mathcal{H}$. Cameron, da Silva, Huang, and Vu\v{s}kovi\'{c} proved that (even hole, triangle)-free graphs have treewidth at most five, which motivates studying the treewidth of even-hole-free graphs of larger clique number. Sintiari and Trotignon provided a construction of (even hole, pyramid, $K_4$)-free graphs of arbitrarily large treewidth. 
 
Here, we show that for every $t$, (even hole, pyramid, diamond, $K_t$)-free graphs have bounded treewidth. The graphs constructed by Sintiari and Trotignon contain diamonds, so our result is sharp in the sense that it is false if we do not exclude diamonds. Our main result is in fact more general, that treewidth is bounded in graphs excluding certain wheels and three-path-configurations, diamonds, and a fixed complete graph. The proof uses “non-crossing decompositions” methods similar to those in previous papers in this series. In previous papers, however, bounded degree was a necessary condition to prove bounded treewidth. The result of this paper is the first to use the method of “non-crossing decompositions” to prove bounded treewidth in a graph class of unbounded maximum degree.\end{abstract}
\section{Introduction}
All graphs in this paper are simple. Let $G = (V(G), E(G))$ be a graph. An {\em induced subgraph} of $G$ is a subgraph of $G$ formed by deleting vertices. In this paper, we use induced subgraphs and their vertex sets interchangeably. A {\em tree decomposition $(T, \chi)$ of $G$} consists of a tree $T$ and a map $\chi: V(T) \to 2^{V(G)}$ satisfying the following: 
\begin{enumerate}[(i)]
\item For all $v \in V(G)$, there exists $t \in V(T)$ such that $v \in \chi(t)$; 

\item For all $v_1v_2 \in E(G)$, there exists $t \in V(T)$ such that $v_1, v_2 \in \chi(t)$; 
\item For all $v \in V(G)$, the subgraph of $T$ induced by $\{t \in V(T) \text{ s.t. } v \in \chi(t)\}$ is connected. 
\end{enumerate}

The {\em width} of a tree decomposition $(T, \chi)$ of $G$ is $\max_{t \in V(T)} |\chi(t)| - 1$. The {\em treewidth} of $G$, denoted $\tw(G)$, is the minimum width of a tree decomposition of $G$. Treewidth was introduced by Robertson and Seymour in their study of graph minor theory. 

Treewidth is roughly a measure of how ``complicated'' a graph is: forests have treewidth one, and in general, the smaller the treewidth, the more ``tree-like'' (and thus ``uncomplicated'') the graph. Graphs with bounded treewidth have nice structural properties, and many classic NP-hard problems can be solved in polynomial time in graphs with bounded treewidth (see \cite{dynamic-programming} for more details). Understanding which graphs have bounded treewidth is an important question in the field of structural graph theory. This question is usually explored by considering substructures of graphs that, when present, cause the treewidth to be large, and when absent, guarantee that the treewidth is small. Robertson and Seymour famously gave a complete answer to this question in the case of subgraphs. By $W_{k \times k}$ we denote the {\em $(k \times k)$-wall}; see \cite{wallpaper} for a full definition. 
\begin{theorem}[\cite{RS-GMV}]
There is a function $f : \mathbb{N} \to \mathbb{N}$ such that every graph of treewidth at least
$f(k)$ contains a subdivision of $W_{k \times k}$ as a subgraph. 
\end{theorem}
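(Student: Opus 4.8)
\emph{Proof proposal.} It suffices to produce a $(3k\times 3k)$-grid as a minor of $G$ whenever $\tw(G)$ is large enough: the wall $W_{k\times k}$ is a minor of the $(3k\times 3k)$-grid, and since $W_{k\times k}$ has maximum degree three, the classical fact that a graph of maximum degree at most three is a minor of $G$ if and only if it is a topological minor of $G$ upgrades this to a subdivision of $W_{k\times k}$ occurring as a subgraph of $G$. So the statement reduces to the Excluded Grid Theorem: there is a function $f$ such that $\tw(G)\ge f(k)$ forces a $(3k\times 3k)$-grid minor in $G$.

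First I would extract a highly linked substructure from large treewidth. By the duality between treewidth and brambles (equivalently, tangles), if $\tw(G)\ge f(k)$ then $G$ has a bramble of order more than $f(k)$, and a bramble of large order yields a \emph{well-linked set}: a vertex set $Z\subseteq V(G)$, of size tending to infinity with $\tw(G)$, such that for every partition of $Z$ into two equal halves $X,Y$ there are $|X|$ pairwise vertex-disjoint paths from $X$ to $Y$ in $G$. Well-linkedness is the property that makes $Z$ a robust reservoir of connectivity: it is essentially preserved when a bounded-size vertex set is deleted, which is exactly what allows the grid to be assembled greedily.

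The heart of the argument is to build the grid inside this reservoir. I would proceed iteratively, maintaining a family of vertex-disjoint ``horizontal'' paths $P_1,\dots,P_k$, each carrying many vertices of a residual well-linked subset of $Z$, together with a growing disjoint family of ``vertical'' paths; at each step one routes one further vertical path crossing all of $P_1,\dots,P_k$, re-routing the current configuration as needed via Menger's theorem and using that deleting the current bounded-size structure leaves enough well-linkedness intact. The obstruction, and the step I expect to be the hardest, is that the paths supplied by Menger's theorem may cross one another in an uncontrolled, non-planar pattern, so the raw path system need not resemble a grid at all. Overcoming this needs a cleaning phase: by repeated applications of Menger's theorem together with a Ramsey-type linkage-reduction argument, one passes to a sub-system in which each vertical path meets the horizontal paths in the correct cyclic order, which is precisely the planar skeleton of a genuine grid. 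Making this cleaning quantitatively efficient is what separates the original tower-type bound of Robertson and Seymour from the later polynomial bounds of Chekuri--Chuzhoy and others; for the mere existence of $f$, any finite bound suffices.

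Finally, from the cleaned path system I would contract the appropriate branch sets to exhibit the $(3k\times 3k)$-grid as a minor of $G$, pass to $W_{k\times k}$ as a minor of it, and apply the maximum-degree-three remark to conclude that $G$ contains a subdivision of $W_{k\times k}$ as a subgraph. The resulting $f$ is the composition of the bramble-to-well-linked-set estimate, the well-linked-set-to-grid estimate, and the constant loss in the grid-to-wall step. The only genuinely delicate ingredient is the cleaning phase that turns an arbitrary large system of crossing disjoint paths into one that realizes the grid's planar pattern; everything else is classical connectivity duality and routine bookkeeping.
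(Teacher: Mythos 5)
The paper does not prove this statement at all: it is imported as a black box from Robertson and Seymour's Graph Minors V, so there is no in-paper proof to compare against. Judged on its own, your proposal correctly handles all the peripheral reductions: $W_{k\times k}$ is indeed a minor of a grid of comparable size; the remark that a graph of maximum degree at most three is a minor of $G$ if and only if some subdivision of it is a subgraph of $G$ is classical and correctly upgrades a wall minor to the required subdivision; and the passage from large treewidth to a large-order bramble (Seymour--Thomas duality) and thence to a large well-linked set is standard and sound.

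The genuine gap is that the entire combinatorial content of the theorem is concentrated in the one step you explicitly decline to execute: converting a well-linked set into a path system realizing the planar pattern of a grid. Saying that ``a Ramsey-type linkage-reduction argument'' passes to a sub-system in which the vertical paths meet the horizontal ones in the correct cyclic order is a description of the goal, not an argument; the difficulty is precisely that a large family of disjoint paths supplied by Menger can pairwise intersect the horizontal system in arbitrarily permuted orders, and one must show that either many of them are pairwise ``aligned'' (yielding the grid) or many are pairwise ``crossing'' (which must itself be converted into a large clique or grid minor by a separate construction). This dichotomy and its quantitative control is the heart of every known proof (Robertson--Seymour's original argument, the Diestel--Jensen--Gorbunov--Thomassen simplification, and the polynomial-bound proofs of Chekuri--Chuzhoy and Chuzhoy--Tan), and none of it is present here. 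As written, your text is an accurate road map of the standard proof rather than a proof; to close the gap you would need to carry out the alignment-versus-crossing analysis, or else simply cite the Excluded Grid Theorem as the paper does.
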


Recently, the study of which graphs have bounded treewidth has focused on {\em hereditary} graph classes; that is, classes of graphs defined by forbidden induced subgraphs. Most conjectures and theorems about the treewidth of hereditary graph classes fall into one of two categories: bounded treewidth in graph classes with bounded maximum degree, and logarithmic treewidth in graph classes with arbitrary maximum degree and bounded clique number. The main open question in the former category was the following: 
\begin{conjecture}[\cite{aboulker}]\label{conj:wallconj}
For all $k, \Delta > 0$, there exists $c = c(k, \Delta)$ such that every graph with
maximum degree at most $\Delta$ and treewidth more than $c$ contains a subdivision of $W_{k\times k}$ or the
line graph of a subdivision of $W_{k \times k}$ as an induced subgraph.
\end{conjecture}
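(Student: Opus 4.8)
\medskip

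\noindent\textbf{A proof proposal.}
Fix $k$ and $\Delta$, and suppose $G$ has maximum degree at most $\Delta$ and very large treewidth. The plan has three stages: first extract a subdivision of a huge wall as a (not necessarily induced) subgraph of $G$ using the Robertson--Seymour wall theorem; then analyse the ``noise'', i.e.\ the edges of $G$ with both ends on the extracted wall that are not edges of the wall; and finally either remove the noise to expose an induced subdivision of $W_{k \times k}$, or argue that the noise is forced to be so structured that $G$ contains an induced copy of the line graph of a subdivision of $W_{k \times k}$. That the second alternative is genuinely needed can be seen from the line graph $L$ of a subdivision of a large wall with long branch paths: $L$ has bounded degree and large treewidth, but every subdivision of a large wall has an induced claw and hence is not a line graph, so $L$ has no induced subdivision of a large wall.

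It is convenient to reduce first to the subcubic case. Replace each vertex $v$ of $G$ by a tree $T_v$ with $\deg_G(v)$ leaves and all internal degrees at most $3$, the leaves inheriting the edges at $v$; the resulting subcubic graph $G'$ satisfies $\tw(G') \ge \tw(G)$, and an induced subdivision of a wall --- or of the line graph of a subdivision of a wall --- found in $G'$ can be pushed down, after contracting the trees $T_v$, to an induced subgraph of the same type in $G$, provided one first argues that a bounded loss in the wall size lets us avoid ``collisions'' among leaves of a single $T_v$. So assume $\Delta = 3$, and let $H \subseteq G$ be a subdivision of $W_{k' \times k'}$ with $k' = k'(k)$ enormous. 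Since every branch vertex of $H$ already has degree $3$ within $H$ and $G$ is subcubic, every edge of $G[V(H)] \setminus E(H)$, which I will call a \emph{chord}, joins two subdivision vertices of $H$ and each subdivision vertex lies on at most one chord: thus $G[V(H)]$ is exactly $H$ together with a matching $M$ of chords.

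Now for the cleaning. Call a chord \emph{internal} if its two ends lie on a single branch path of $H$ and \emph{external} otherwise. Internal chords are harmless: given an internal chord on a branch path $P$, reroute $P$ through the chord and delete the excised interior of $P$; this destroys the chord and every other chord met by the deleted vertices, and after doing this greedily $H$ is still a subdivision of a wall of almost the same size with no internal chords. The real difficulty is the external chords. Naive counting does not suffice here: there may be $\Theta(|V(H)|)$ external chords --- enough that every large sub-wall is met by one --- and using an external chord as a reroute can reshape $H$ beyond recognition. What I expect is needed instead is a linkage/well-linkedness dichotomy in the spirit of the proof of the Flat Wall Theorem: one should show that either a bounded set of vertices meets all ``obstructing'' external chords, leaving behind a large flat sub-wall to which the internal-chord cleaning (now vacuous) applies, hence an induced subdivision of $W_{k \times k}$; or else the external chords together with $H$ certify another bounded-degree, large-treewidth structure, and the ``triangle signature'' of chords clustered near branch vertices --- which, by Whitney's theorem, is exactly what being inside a line graph looks like --- lets one recognise this structure, after a grid-Ramsey step making the local chord pattern uniform over the bricks, as the line graph of a subdivision of a somewhat smaller wall; then recurse.

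The heart of the proof is establishing that dichotomy with bounds strong enough that the final $k'$ depends only on $k$ and $\Delta$; the reduction to $\Delta = 3$ and the handling of internal chords are comparatively routine. I would also anticipate that keeping the parameters under control across the recursion forces one to run the argument not on a single extracted wall but on a whole family of them at once --- or, equivalently, directly in terms of balanced separators and brambles of $G$ --- so that replacing one certified obstruction by another does not degrade the bounds.
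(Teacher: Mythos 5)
You should first note that the paper does not prove this statement at all: it is stated as a conjecture, attributed to Aboulker, Adler, Kim, Sintiari and Trotignon, and the authors merely remark that it was recently proven by Korhonen (reference \cite{Korhonen}) ``by a different method''; the present paper only resolves special cases of a related flavour via balanced separators and non-crossing star decompositions. So there is no in-paper proof to compare yours against, and your proposal must stand on its own. It does not: as written it is a plan rather than a proof, and you say so yourself when you write that ``the heart of the proof is establishing that dichotomy'' for external chords. That dichotomy --- either a bounded hitting set for the obstructing external chords leaving a large flat induced sub-wall, or a recognizable line-graph structure --- is precisely the entire content of the theorem. Nothing in the proposal makes it plausible that the first alternative's hitting set can be taken of bounded size (a priori the external chords could form an expander-like pattern on the wall with no small transversal and no line-graph structure), and the appeal to ``the spirit of the Flat Wall Theorem'' does not supply an argument, since the Flat Wall Theorem controls minors/topological minors, not induced subgraphs, and the passage from ``no small transversal'' to ``induced line graph of a wall'' is exactly where all known proofs (Korhonen's included) have to work hardest.

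There is also a concrete problem earlier in the argument: the reduction to the subcubic case. If you expand each vertex $v$ into a tree $T_v$, then indeed $\tw(G') \ge \tw(G)$, but an \emph{induced} subdivision of a wall in $G'$ does not push down to an induced subdivision of a wall in $G$. The wall in $G'$ may pass through several leaves of the same $T_v$ on different branch paths without using any edge of $T_v$ between them; contracting $T_v$ identifies all of these to $v$, producing a vertex of large degree in the image and destroying the subdivision structure. Since there may be linearly many such collisions spread across the wall, ``a bounded loss in the wall size'' does not repair this, and in fact these collisions are one of the sources of the line-graph outcome rather than something that can be legislated away. So both the reduction and the main dichotomy are genuine gaps; the only parts of the proposal that are actually established are the observation that in a subcubic host the chords form a matching on subdivision vertices and the rerouting argument for internal chords.
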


In earlier papers in this series, several special cases of Conjecture \ref{conj:wallconj} were resolved (see \cite{wallpaper}, \cite{ehf}). Conjecture \ref{conj:wallconj} was recently proven in \cite{Korhonen}, by a different method.
In this paper, we use techniques similar to  \cite{wallpaper} and \cite{ehf}
to prove that a hereditary graph class of arbitrarily large maximum degree has bounded treewidth. 

Before we state our main result, we define several types of graphs.
If $G$ is a path or a cycle, the {\em length} of $G$ is $|E(G)|$. By $P_k$ we denote the path on $k$ vertices. If $P = p_1 \dd p_2 \dd \cdots \dd p_k$, then $P^* = P \setminus \{p_1, p_k\}$ denotes the {\em interior} of the path $P$. A {\em hole} of $G$ is an induced cycle of length at least four. By $C_4$ we denote the hole of length four. Let $G$ be a graph and let $v \in V(G)$. The {\em open neighborhood of $v$ in $G$}, denoted $N_G(v)$, is the set of vertices of $V(G)$ adjacent to $v$. The {\em closed neighborhood of $v$ in $G$}, denoted $N_G[v]$, is the union of $v$ and $N_G(v)$. Let $X \subseteq V(G)$. The {\em open neighborhood of $X$ in $G$}, denoted $N_G(X)$, is the set of vertices of $V(G) \setminus X$ with a neighbor in $X$. The {\em closed neighborhood of $X$ in $G$}, denoted $N_G[X]$, is the union of $X$ and $N_G(X)$. When the graph $G$ is clear, we omit the subscript $G$ from the open and closed neighborhoods. A {\em wheel} $(H, w)$ is a hole $H$ and a vertex $w \in V(G)$ such that $w$ has at least three pairwise nonadjacent neighbors in $H$. A {\em line wheel} $(H, v)$ is a hole $H$ and a vertex $v \not \in H$ such that $|H \cap N(v)|$ is the union of two disjoint edges. An {\em even wheel} is a line wheel or a wheel $(H, w)$ such that $|N(w) \cap H|$ is even and $N(w) \cap H$ is not a path of length one. If $X, Y \subseteq V(G)$, we say {\em $X$ is anticomplete to $Y$} if there are no edges with one endpoint in $X$ and one endpoint in $Y$. We say that $X$ {\em has a neighbor in $Y$} if $X$ is not anticomplete to $Y$. We say that $v$ is anticomplete to $X$ if $\{v\}$ is anticomplete to $X$. 

A {\em diamond} is the graph given by deleting an edge from $K_4$. A {\em theta} is a graph consisting of two non-adjacent vertices $a$ and $b$ and three paths $P_1$, $P_2$, $P_3$ from
$a$ to $b$ of length at least two, such that $P^*_1$, $P^*_2$, $P_3^*$ are pairwise disjoint and anticomplete to each other. We say this is a theta {\em between $a$ and $b$ through $P_1$, $P_2$, and $P_3$}. A {\em pyramid} is a graph consisting of a vertex $a$, a triangle $b_1, b_2, b_3$, and three paths $P_1$, $P_2$, $P_3$ from $a$ to $b_1, b_2, b_3$, respectively, such that $P_1 \setminus \{a\}$, $P_2 \setminus \{a\}$, and $P_3 \setminus \{a\}$ are pairwise disjoint and anticomplete to each other, and at least two of $P_1, P_2, P_3$ have length at least two. The vertex $a$ is called the {\em apex} of the pyramid. A {\em prism} is a graph consisting of two disjoint triangles $a_1a_2a_3$ and $b_1b_2b_3$ and three paths $P_1$, $P_2, P_3$, with $P_i$ from $a_i$ to $b_i$, such that for all distinct $i, j \in \{1, 2, 3\}$, the only edges between $P_i$ and $P_j$ are $a_ia_j$ and $b_ib_j$. Thetas, pyramids, and prisms are called {\em three-path configurations}. 

If $H$ is a graph, then by {\em $H$-free graphs} we mean the class of graphs which do not contain $H$ as an induced subgraph. If $\mathcal{H}$ is a set of graphs, then by {\em $\mathcal{H}$-free graphs} we mean the class of graphs which are $H$-free for every $H \in \mathcal{H}$. Let $\mathcal{C}$ be the class of ($C_4$, diamond, theta, pyramid, prism, even wheel)-free graphs, and let $\mathcal{C}_t$ be the class of ($C_4$, diamond, theta, pyramid, prism, even wheel, $K_t$)-free graphs. The main result of this paper is the following theorem.  
\begin{theorem}
\label{thm:main-nonspecific}
For all $t > 0$ there exists $c_t \geq 0$ such that $\tw(G) \leq c_t$ for every $G \in \mathcal{C}_t$. 
\end{theorem}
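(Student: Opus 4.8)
The plan is to prove Theorem~\ref{thm:main-nonspecific} by induction on $t$, driven by the observation that $\mathcal{C}_t$ is closed under ``taking a neighbourhood and dropping one unit of clique number''. Precisely: if $G \in \mathcal{C}_t$ and $v \in V(G)$, then $G[N(v)] \in \mathcal{C}_{t-1}$. Being an induced subgraph of $G$, the graph $G[N(v)]$ is $C_4$-free, diamond-free, theta-free, pyramid-free, prism-free and even-wheel-free; and since $v$ is complete to $N(v)$, a $K_{t-1}$ in $N(v)$ would extend to a $K_t$ in $G$, so $G[N(v)]$ is also $K_{t-1}$-free. (More is true: since $v$ is complete to $N(v)$, any even hole of $G[N(v)]$ together with $v$ is an even wheel of $G$, so $G[N(v)]$ is in fact even-hole-free; we expect this strengthening to be useful in the structural part.) The base case $t \le 2$ is immediate, as $\mathcal{C}_2$ is the class of edgeless graphs, so $c_2 = 0$ works.

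For the inductive step, assume $c_{t-1}$ exists. By the standard reduction (treewidth is bounded on a hereditary class precisely when the size of a smallest balanced separator is uniformly bounded, over all members and all vertex weightings), it suffices to show that for some constant depending only on $t$, every $G \in \mathcal{C}_t$ and every weighting $w$ of $V(G)$ admit a $w$-balanced separator of bounded size. Suppose not; then, after passing to a ``central bag'' $\beta$ in the usual way, we obtain an induced subgraph $G[\beta]$ in which the heavy part of $\beta$ cannot be balanced-separated by any small set, and moreover no small separation of $G[\beta]$ splits its heavy part.

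The heart of the argument is a structural dichotomy for central bags of graphs in $\mathcal{C}_t$: either \emph{(a)} $G[\beta]$ has a small separator splitting its heavy part --- which contradicts the choice of $\beta$; or \emph{(b)} the heavy part of $\beta$ is contained in $N[D]$ for some bounded set $D \subseteq V(G)$. This is proved by adapting the ``non-crossing decomposition'' analysis of the earlier papers in the series on three-path-configuration- and wheel-free graphs (\cite{wallpaper}, \cite{ehf}): forbidding thetas, pyramids, prisms, and even wheels is what forces either a ``local'' separator or domination by a bounded set, while forbidding the diamond and $K_t$ keeps common neighbourhoods under control --- for any two vertices $u,v$ of $G$, adjacent or not, $N(u)\cap N(v)$ induces a clique, hence has fewer than $t$ vertices, so distinct petals $N[v]$ $(v \in D)$ overlap only in bounded sets. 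In the bounded-degree setting of previous papers, alternative \emph{(b)} was vacuously impossible; ruling it out without a degree bound is the new contribution.

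To rule out \emph{(b)}, we use the inductive hypothesis together with the neighbourhood observation: for each $v \in D$ we have $G[N(v)] \in \mathcal{C}_{t-1}$, hence $\tw(G[N(v)]) \le c_{t-1}$ and $\tw(G[N[v]]) \le c_{t-1}+1$. One then glues these bounded-width tree decompositions of the petals $N[v]$ along their bounded pairwise intersections --- this is where the non-crossing machinery is used, both to organise the gluing and to accommodate the edges of $G[\beta]$ running between different petals --- producing a tree decomposition of $G[\beta]$ of width bounded in terms of $t$ and $c_{t-1}$, contradicting the fact that the heavy part of $\beta$ has no small balanced separator. Choosing $c_t$ large enough relative to the width thus obtained completes the induction. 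I expect the two delicate points to be, first, extracting alternative \emph{(b)} from the structural analysis in a form strong enough that the petals and their interactions genuinely fit into a bounded-width decomposition (not merely cover the heavy part), including a bounded-size handle on the cross-petal edges; and second, setting up the structural dichotomy so that the presence of the diamond and even-wheel obstructions really does upgrade the logarithmic bound available in the prism-free case to one independent of $|V(G)|$.
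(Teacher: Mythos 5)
There is a genuine gap, and your route diverges from the paper's in a way that leaves the hardest steps unproved. Your ``structural dichotomy'' --- either the central bag has a small separator splitting its heavy part, or its heavy part lies in $N[D]$ for a bounded set $D$ --- is asserted by appeal to ``adapting'' earlier papers, but no such domination statement is proved here or there, and nothing in your sketch indicates how excluding thetas, prisms, pyramids and even wheels would force alternative \emph{(b)}. The paper's actual dichotomy is different and is where all the work lies: after ordering the hub vertices by the degeneracy-type Lemma~\ref{lemma:degeneracy} and taking the central bag $\beta_M$ of the revised star separations at the minimal unbalanced hubs, either every hub is unbalanced and $\beta_M$ is wheel-free (Lemma~\ref{lemma:no-wheels-in-beta}, then Theorem~\ref{thm:wheel-free} via a Ramsey bound), or the first balanced hub $v_m$ survives into $\beta_M$ with at most $4\delta_t$ hub neighbours, and Lemma~\ref{lemma:balanced_vtx_bs} extracts a balanced separator of size $O(t+\delta_t)$ from the fact that the auxiliary bipartite ``clique--component'' graph around $v_m$ is (theta, triangle, wheel)-free and hence has treewidth at most $2$ (Theorem~\ref{thm:theta-triangle-wheel-free}). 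None of this is an instance of your dichotomy, and no induction on $t$ appears: the paper never uses $G[N(v)]\in\mathcal{C}_{t-1}$; it only uses that $N(v)$ is a disjoint union of anticomplete cliques of size less than $t$ (Lemma~\ref{lemma:clique-nbrs}).

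Even granting alternative \emph{(b)}, your method for ruling it out does not go through as stated. Bounded treewidth of each petal $G[N[v]]$, $v\in D$, together with bounded pairwise intersections of petals, does not bound the treewidth of $G[N[D]]$: the union of even two graphs of treewidth $1$ on a common vertex set can contain arbitrarily large grids, and the overlap pattern of the petals need not be tree-like. You flag the cross-petal edges and the ``genuinely fit into a bounded-width decomposition'' issue as delicate points, but these are precisely the content of the theorem in this regime, not technicalities; without a concrete mechanism (such as the paper's non-crossing star separations, the inherited weight function of Lemma~\ref{lemma:grow-a-separator}, and the neighbourhood bound of Lemma~\ref{lemma:small-nbrs} used in Theorem~\ref{thm:extending-bs}) the argument does not close.
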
 
Note that thetas, prisms, and even wheels contain even holes, so (even hole, diamond, pyramid, $K_t$)-free graphs are a subclass of $\mathcal{C}_t$. Therefore, Theorem \ref{thm:main-nonspecific} implies the following: 

\begin{theorem}
\label{thm:ehf-nonspecific}
For all $t > 0$ there exists $d_t \geq 0$ such that $\tw(G) \leq d_t$ for every (even hole, diamond, pyramid, $K_t$)-free graph $G$. 
\end{theorem}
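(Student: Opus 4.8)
We first record that Theorem~\ref{thm:ehf-nonspecific} follows immediately from Theorem~\ref{thm:main-nonspecific}, as observed in the text above, so the whole content is Theorem~\ref{thm:main-nonspecific}; fix $t$. The plan is to reduce this, in the standard way, to producing balanced separators of bounded size: it is well known that there is a function $g$ such that if, for some integer $k$, every $G\in\mathcal{C}_t$ and every weight function $w\colon V(G)\to\mathbb{Z}_{\ge 0}$ admit a set $S\subseteq V(G)$ with $|S|\le k$ for which every component of $G\setminus S$ has total weight at most $\tfrac12 w(V(G))$, then $\tw(G)\le g(k)$ for all $G\in\mathcal{C}_t$. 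So everything reduces to producing such an $S$.

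The local structure of $\mathcal{C}_t$ is easy to read off. Since $G$ is $(C_4,\text{diamond})$-free, any two non-adjacent vertices have at most one common neighbour, and any two adjacent vertices whose common neighbourhood has size at least two span a clique; with $K_t$-freeness this forces $G[N(v)]$ to be a disjoint union of cliques each of size at most $t-2$, so $\tw(G[N[v]])\le t-1$ and $G$ has no clique of size $t$. Thus small treewidth fails, if at all, not because a vertex has high degree but because of the way the rest of $G$ is glued onto the stars $N[v]$. The key lemma I would aim to prove is an \emph{attachment bound}: there is a constant $d=d(t)$ such that for every $v\in V(G)$ and every connected induced subgraph with vertex set $C\subseteq V(G)\setminus N[v]$, we have $|N(C)\cap N(v)|\le d$. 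Indeed $C$ is anticomplete to $v$, so if $N(C)\cap N(v)$ were large then, since $G[N(v)]$ is a union of cliques of bounded size, one could choose pairwise non-adjacent $a_1,a_2,a_3\in N(C)\cap N(v)$ and, inside $C$, three paths from a common vertex to (neighbours of) $a_1,a_2,a_3$ with pairwise disjoint and anticomplete interiors; together with $v$ these give three paths with the right disjointness, producing a theta, pyramid, or prism --- unless the attachments are forced to lie consecutively on a hole through $v$, in which case one gets a wheel that the even-wheel, line-wheel, and $C_4$ hypotheses rule out. This is the substitute for the bounded-degree hypothesis of \cite{wallpaper} and \cite{ehf}: $N[v]$ may be arbitrarily large, but every connected chunk of the remainder of $G$ meets it in a bounded set, so $v$ behaves like a ``near-cutvertex''.

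With the attachment bound available I would build the tree decomposition from a non-crossing family of separations, in the spirit of the earlier papers. If $\Delta(G)$ is bounded, $G$ is handled by the bounded-degree analysis: a $\mathcal{C}_t$-graph of bounded degree and large treewidth would, by the resolution of Conjecture~\ref{conj:wallconj} in \cite{Korhonen} (special cases in \cite{wallpaper}, \cite{ehf}), contain an induced subdivision of a large wall or the line graph of one, and --- taking the wall large --- the former contains a theta while the latter contains a pyramid, a contradiction. Otherwise, choose $v$ with $\deg(v)$ large, and consider the separations $(N[v]\cup C,\; V(G)\setminus C)$ as $C$ runs over the components of $G\setminus N[v]$; by the attachment bound the $v$-side boundary of each is $\{v\}\cup(N(C)\cap N(v))$, of size at most $d+1$. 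Splitting at these separations peels off $G[N[v]]$ (treewidth $\le t-1$) from the strictly smaller graphs $G[C\cup(N(C)\cap N(v))]$, on which one recurses; uncrossing the separations produced along the way yields a laminar family of bounded-order separations. Finally, take an optimal tree decomposition of each ``piece'' cut out by this family, insert into every bag of it the (boundedly many, bounded-size) separator sets incident with that piece, and glue the results along the laminar tree: this is a tree decomposition of $G$, of bounded width because the piece treewidths are bounded and each piece is incident with only boundedly many separators of the family.

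The principal obstacle is the attachment bound in its true form. It is not literally true: a connected set $C$ can attach to $N[v]$ along an unbounded set provided those attachment points occur consecutively along a single hole through $v$ --- the ``wheel'' behaviour, which the hypotheses permit in its odd-spoke guise. So the lemma, and hence the decomposition, must be phrased relative to the combinatorics of these near-wheels rather than as a clean numeric bound, and the delicate point is to certify in each degenerate case that one genuinely obtains a forbidden configuration --- a three-path configuration, a line wheel, or an even wheel --- rather than something allowed; this is where the $C_4$ and even-wheel hypotheses must be used with care, and is the step most likely to need additional ideas, such as a preliminary cleaning of the graph. The remaining difficulty is architectural: arranging that the chosen separations uncross into a laminar family, that the recursion terminates at pieces that are either bounded-degree or trivially bounded-treewidth, and --- crucially --- that each piece meets only boundedly many separators, so that the width does not accumulate over the (unboundedly deep) recursion. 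This last point is the heart of the ``non-crossing decompositions'' method, and carrying it through in the presence of unbounded-degree vertices, which as the abstract notes has not been done before, is the central new contribution.
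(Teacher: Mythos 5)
Your reduction to Theorem~\ref{thm:main-nonspecific}, the reduction to balanced separators, and the observation that neighborhoods in a $(C_4,\text{diamond},K_t)$-free graph are disjoint unions of bounded cliques are all correct and match the paper. But the proof has a genuine gap at exactly the point you flag: your ``key lemma'' (the attachment bound for every connected $C\subseteq V(G)\setminus N[v]$) is false, as you say, because of wheels, and the proposal never supplies the mechanism that replaces it. The paper does not prove any attachment bound in $G$ itself. Instead it (a) uses the da Silva--Vu\v{s}kovi\'c star-cutset lemma (Lemma~\ref{lemma:forcer_lemma}) to show that the canonical star separation at a wheel center separates that wheel; (b) restricts attention to the \emph{unbalanced} wheel centers, ordered by a degeneracy ordering of $\Hub(G)$ (Lemma~\ref{lemma:degeneracy}) so that each has at most $4\delta_t$ earlier hub-neighbors; (c) decomposes simultaneously at all $\leq_A$-minimal such vertices, after \emph{enlarging} each cutset to $\tilde{C}_u$ by adding common neighborhoods with other chosen centers, which is what makes the separations nearly non-crossing with no degree hypothesis (Lemma~\ref{lemma:new-sepns-noncrossing}); and only then, inside the central bag $\beta_M$ where those vertices are no longer wheel centers (Lemma~\ref{lemma:no-wheels-in-beta}), does it prove the bounded-neighborhood statement you want (Lemma~\ref{lemma:small-nbrs}: $|N_{\beta_M}(v)\setminus\Hub(\beta_M)|\le 2t$ for $v\in M$) and extract a balanced separator, either because $\beta_M$ is wheel-free (Theorem~\ref{thm:wheel-free}) or because it contains a balanced vertex with few hub neighbors (Lemma~\ref{lemma:balanced_vtx_bs}, Theorem~\ref{thm:mainthm-betabs}). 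None of steps (a)--(d) appears in your outline, and without them the ``near-wheel'' degeneracies you mention cannot be certified to yield forbidden configurations.

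The second gap is architectural. You propose an unboundedly deep recursion, peeling components off one high-degree vertex at a time and uncrossing the resulting separations into a laminar family, and you correctly identify that preventing width from accumulating over the recursion depth is the crux --- but you leave both the uncrossing and the non-accumulation unresolved. The paper avoids deep recursion altogether: there is a single round of decomposition, the weight is pushed into the central bag by the inherited weight function, a balanced separator of $\beta_M$ is pulled back to $G$ in one step (Lemma~\ref{lemma:grow-a-separator}, Theorem~\ref{thm:extending-bs}), and Lemma~\ref{lemma:bs-to-tw} converts this to a treewidth bound. The step you defer --- ``arranging that the chosen separations uncross into a laminar family'' in the presence of unbounded degree --- is precisely the paper's central new contribution (the revised separations $\tilde{S}_u$ and Lemma~\ref{lemma:new-sepns-noncrossing}), so the proposal is missing the main idea rather than a routine detail. (Your bounded-degree detour via \cite{Korhonen} is sound but unnecessary; note also that the line graph of a subdivided wall contains a prism, which is what is excluded here.)
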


Theorem \ref{thm:main-nonspecific} is the first result of this series that gives a constant bound on treewidth in a class of graphs with arbitrary maximum degree; the previous results have either obtained a constant bound on treewidth in graph classes with bounded degree (\cite{wallpaper}, \cite{ehf}), or given a logarithmic bound on treewidth in graph classes with bounded clique number (\cite{logpaper}).
 Bounded treewidth results for similar graph classes were also proved in \cite{cameron-vuskovic}.

In \cite{layered-wheels}, Sintiari and Trotignon construct (even hole, pyramid, $K_4$)-free graphs of arbitrarily large treewidth. These graphs contain diamonds; therefore, Theorem \ref{thm:ehf-nonspecific} is sharp in the sense that excluding diamonds is necessary to obtain bounded treewidth.  Sintiari and Trotignon also made the following conjecture: 

\begin{conjecture}[\cite{layered-wheels}]
\label{conj:ehf-diamond-k4}
(Even hole, diamond, $K_4$)-free graphs have bounded treewidth. 
\end{conjecture}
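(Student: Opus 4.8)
\medskip

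The plan is to prove Theorem~\ref{thm:main-nonspecific} by producing, in every $G\in\mathcal{C}_t$, balanced separators of size bounded in terms of $t$, and then invoking the (by now standard) fact that bounded separation number and bounded treewidth are equivalent up to a multiplicative constant. Since $\mathcal{C}_t$ is hereditary, it suffices to find $c_t'$ such that for every $G\in\mathcal{C}_t$ and every weight function $w$ on $V(G)$ there is $X\subseteq V(G)$ with $|X|\le c_t'$ so that every component of $G\setminus X$ has weight at most $\tfrac12 w(V(G))$; this is exactly the ``central bag''/balanced-separator framework of the earlier papers in this series, the new difficulty being that we may no longer assume $G$ has bounded maximum degree.

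First I would record the local structure forced by ($C_4$, diamond, $K_t$)-freeness: any two vertices of $G$ have at most $t-2$ common neighbours, and for every vertex $v$ the graph $G[N(v)]$ contains no induced $P_3$ and hence is a disjoint union of cliques $Q_1^v, Q_2^v, \dots$, each of size at most $t-2$. Thus the only way $\deg(v)$ can be large is by $v$ meeting many such cliques, and the crucial reduction is to show that the exclusion of thetas, pyramids, prisms and even wheels makes all but boundedly many of these cliques ``inert'': for each $v$ there is a set of at most $O_t(1)$ ``active'' cliques among the $Q_i^v$ such that, after deleting $v$ together with its active cliques, every other $Q_i^v$ lies in a component of the remainder whose only attachment to $\{v\}\cup(\text{active cliques})$ passes through $v$. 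Intuitively, a vertex of unbounded degree cannot be ``substantially'' joined to more than boundedly many different parts of $G$ without completing one of the forbidden configurations --- this is where $C_4$-freeness (forbidding parallel short connections) and the even-wheel condition (controlling attachments to holes) are used. It follows that $G$ has a ``pendant-free reduction'' $G'$ of bounded maximum degree, obtained by deleting the inert cliques and identifying the resulting twins, with $\tw(G)\le\tw(G')+O_t(1)$ and $G'$ still belonging to a class of the same type.

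On $G'$, which now has bounded maximum degree, I would run the non-crossing-decomposition machinery from the previous papers: starting from a bounded ``anchor'', grow a skeleton of shortest paths by a breadth-first/domination process, and consider the separations of $G'$ cut out by the bounded-radius balls around the skeleton. These balls have bounded size because $\deg$ is bounded in $G'$; and because $G'$ inherits the exclusions, a pair of crossing such separations can be opened up --- via a three-path extraction along the skeleton --- into a theta, pyramid, prism or even wheel, a contradiction. So the chosen separations form a laminar family of bounded-size separations that together dominate $V(G')$, and such a family assembles into a tree decomposition of $G'$ of bounded width, yielding in particular a bounded balanced separator of $G'$ for any weight function. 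Pulling this separator back to $G$, by adding for each of its boundedly many vertices the boundedly many active cliques in its neighbourhood, produces the balanced separator of $G$ required above and completes the proof.

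The hard part, and the genuinely new ingredient relative to the bounded-degree cases already treated in the series, is the reduction in the second paragraph: making precise the ``inertness'' of the pendant cliques and arranging the passage $G\rightsquigarrow G'$ so that it is simultaneously compatible with the skeleton, with the weight function $w$, and with the non-crossing property of the separations. I expect this to demand a careful analysis of how a single vertex of unbounded degree can see the rest of the graph in the absence of $C_4$'s, diamonds and even wheels; and it is quite possible that the cleanest route bypasses an explicit reduction to bounded degree, instead adapting the non-crossing argument so that bags are allowed to omit the inert cliques directly. Either way, taming the unbounded neighbourhoods is the crux.
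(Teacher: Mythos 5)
There is a fundamental mismatch between what you set out to prove and the statement at hand. Conjecture~\ref{conj:ehf-diamond-k4} concerns (even hole, diamond, $K_4$)-free graphs with \emph{no} exclusion of pyramids, and the paper does not prove it: it is stated as an open conjecture of Sintiari and Trotignon, and the paper explicitly says that its main result, Theorem~\ref{thm:ehf-nonspecific}, is only a special case (the case where pyramids are additionally excluded). Your proposal is aimed at Theorem~\ref{thm:main-nonspecific}, i.e.\ at the class $\mathcal{C}_t$, which excludes pyramids. That does not suffice: a pyramid in which the three paths from the apex all have lengths of the same parity contains only odd holes, so pyramids can occur in (even hole, diamond, $K_4$)-free graphs, and such graphs need not lie in $\mathcal{C}_4$. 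So even a fully rigorous version of your sketch would establish only the pyramid-free case, not the conjecture. The obstruction is not cosmetic: in the paper's argument, pyramid-freeness is exactly what makes the neighbourhood-taming step work --- Lemma~\ref{lemma:bounding-nbrhood-helper} and Lemma~\ref{lemma:small-nbrs} both require that the relevant vertex not be a pyramid apex, and Theorem~\ref{thm:mainthm-betabs} assumes the central bag is pyramid-free. Your ``inert cliques'' step, which is the analogous taming of unbounded neighbourhoods, would face the same failure mode: three cliques of $N(v)$ attaching to a common component produce a theta, a wheel, \emph{or a pyramid with apex $v$} (via Lemma~\ref{lemma:three_vtx_attachments}), and only the first two are forbidden in the conjecture's class.

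Separately, even as a proof sketch for Theorem~\ref{thm:main-nonspecific}, your route differs from the paper's and is undeveloped at its crux. You propose reducing to a bounded-degree graph $G'$ and then running the bounded-degree non-crossing machinery of the earlier papers; the paper emphasizes that its contribution is precisely to \emph{avoid} any bounded-degree reduction. Instead it works in $G$ directly: it takes canonical star separations at unbalanced hub vertices, enlarges their cut sets (the $X$-revised separations) so that the resulting collection is pairwise nearly non-crossing (Lemma~\ref{lemma:new-sepns-noncrossing}, which uses diamond-freeness and Lemma~\ref{lemma:common_nbrs}), forms a single central bag $\beta_M$, shows $\beta_M$ is either wheel-free or has a balanced vertex whose hub-neighbourhood is small by degeneracy (Lemma~\ref{lemma:degeneracy}), extracts a balanced separator there, and lifts it back via Lemma~\ref{lemma:grow-a-separator}. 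Your sketch leaves the passage $G\rightsquigarrow G'$ --- which you yourself identify as the hard part --- entirely unproved, and it is not clear that deleting ``inert'' cliques and identifying twins preserves membership in the class or interacts correctly with the weight function.
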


Theorem \ref{thm:ehf-nonspecific} is a special case of Conjecture \ref{conj:ehf-diamond-k4}. If Conjecture \ref{conj:ehf-diamond-k4} can be proven using techniques similar to those used in this paper and in the previous papers of this series, then Theorem \ref{thm:ehf-nonspecific} is the base case to prove Conjecture \ref{conj:ehf-diamond-k4}. Indeed, we conjecture the following slight generalization of Conjecture \ref{conj:ehf-diamond-k4}. Let $\mathcal{C}_t^*$ be the class of ($C_4$, diamond, theta, prism, even wheel, $K_t$)-free graphs.  
\begin{conjecture} 
For all $t > 0$ there exists $c_t \geq 0$ such that $\tw(G) \leq c_t$ for every $G \in \mathcal{C}_t^*$. 
\label{conj:generalization}
\end{conjecture}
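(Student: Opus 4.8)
The plan is to re-run the ``non-crossing decomposition'' machinery behind Theorem~\ref{thm:main-nonspecific}, but to excise every appeal to pyramid-freeness and replace it with an argument that uses only diamond-freeness and $K_t$-freeness. As in the earlier proof, I would argue by contradiction: assuming $\tw(G)$ is arbitrarily large for some $G \in \mathcal{C}_t^*$, the framework produces a weight function admitting no bounded balanced separator, and hence a bounded ``hub''/``core'' obstruction of the kind that the pyramid-free analysis was designed to rule out. The entire difficulty is thereby concentrated in the places where Theorem~\ref{thm:main-nonspecific} invoked the absence of pyramids, and the goal becomes to show that in the diamond-free world pyramids are too rigid to sustain such an obstruction.

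The key structural leverage I would exploit is the rigidity of triangles, and hence of pyramid bases, under diamond-freeness. If $b_1b_2b_3$ is a triangle and $v \notin \{b_1,b_2,b_3\}$ is adjacent to exactly two of the $b_i$, then $\{v,b_1,b_2,b_3\}$ induces a diamond; so in $G$ every vertex outside a triangle is adjacent to at most one of its vertices, or to all three. Moreover diamond-freeness forces every edge to lie in a \emph{unique} maximal clique (if an edge lay in two, choosing a vertex of one clique nonadjacent to a vertex of the other produces a diamond), so the triangle $b_1b_2b_3$ lies in a unique maximal clique $Q$, and $|Q| < t$ by $K_t$-freeness. The three base-vertices of any pyramid therefore behave as essentially independent ``ports'': the three legs leave $Q$ through distinct, pairwise nonadjacent neighbors. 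I would use this to bound, for each maximal clique $Q$, the number of pyramids whose base meets $Q$, and to show that their apexes cannot accumulate along a well-linked set without two legs of distinct pyramids combining into a forbidden theta, prism, or even wheel.

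With this in hand, the decomposition step is to organize the pyramids into a family of separations compatible with the central bags of the main framework. Concretely, I would aim to prove that after deleting a bounded hub capturing all base-cliques and apexes ``active'' near a given central bag, every component of the remainder is pyramid-free, hence lies in $\mathcal{C}_t$ and has bounded treewidth by Theorem~\ref{thm:main-nonspecific}; the non-crossing gluing lemma then reassembles these pieces together with the bounded hubs into a tree decomposition of bounded width. The rigidity of bases is what should make the relevant separations laminar rather than crossing, since two pyramids that share structure would have to share a base-clique or an apex, both of which are pinned down to bounded-size sets.

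The hard part will be exactly to rule out a diamond-free analogue of the layered wheels of \cite{layered-wheels}. Those examples are (even hole, pyramid, $K_4$)-free of unbounded treewidth, and they rely essentially on diamonds to let many pyramids share legs and bases; the content of Conjecture~\ref{conj:generalization} is that forbidding diamonds destroys this sharing. Converting the qualitative ``rigidity of bases'' observation into a quantitative bound, namely showing that apexes genuinely cannot proliferate along a highly connected set and that the induced separations are non-crossing rather than merely sparse, is where the essential new work lies, and is the reason the statement is posed here as a conjecture rather than proved.
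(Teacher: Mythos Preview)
The statement you are attempting is Conjecture~\ref{conj:generalization}, which the paper explicitly leaves \emph{open}; there is no proof in the paper to compare your proposal against. The paper proves only the pyramid-free special case (Theorem~\ref{thm:main-nonspecific}), and it flags the conjecture precisely because the authors do not know how to remove the pyramid hypothesis. Your own final paragraph concedes this, so what you have written is a research outline, not a proof.

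As an outline it correctly locates the obstacle but does not surmount it. The concrete uses of pyramid-freeness in the paper are Lemmas~\ref{lemma:bounding-nbrhood-helper}, \ref{lemma:balanced_vtx_bs}, and \ref{lemma:small-nbrs} and Theorems~\ref{thm:mainthm-betabs} and \ref{thm:extending-bs}: in each case the hypothesis ``$v$ is not a pyramid apex in $\beta$'' is what forces at most two of the neighborhood-cliques $K_1,\dots,K_\ell$ to attach to any one component of $\beta\setminus N[v]$, and this bound is what makes the auxiliary graph $H$ have maximum degree~$2$ on one side and hence small treewidth. Your ``rigidity of bases'' observation (that a triangle sits in a unique maximal clique of size $<t$) is true but does not touch this step: nothing in it limits how many cliques in $N(v)$ can reach a single component, nor how many vertices of the central bag can be pyramid apexes. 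Your proposed fix---delete a bounded hub so that every remaining component is pyramid-free---assumes exactly what needs to be proved: absent a bound on the number of apexes active in $\beta_M$, there is no reason such a hub is bounded, and the Sintiari--Trotignon layered wheels show that in the presence of diamonds it is not. In short, you have restated the difficulty rather than resolved it, which is consistent with the paper's decision to pose this as a conjecture.
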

In view of Conjecture \ref{conj:generalization}, when possible, we prove the results of this paper for graphs in $\mathcal{C}_t^*$ instead of $\mathcal{C}_t$. 
\subsection{Proof outline}
Here, we give a brief outline of the ideas used in the proof of Theorem \ref{thm:main-nonspecific}. Many of these ideas, or similar ones, appear in previous papers in this series. One major tool we use is that of balanced separators; a graph has a balanced separator for every normalized weight function on its vertices if and only if it has bounded treewidth. 

Let $G \in \mathcal{C}_t$. If $G$ is also wheel-free, then $G$ has bounded treewidth (by Theorem \ref{thm:wheel-free}). Therefore, we would like to apply decomposition techniques to $G$ to obtain an induced subgraph $\beta$ of $G$ such that: (i) the treewidth of $\beta$ is easy to compute (i.e. because $\beta$ is wheel-free), and (ii) there exists a function $f$ such that if $\tw(\beta) \leq c$, then $\tw(G) \leq f(c)$.

To obtain property (i), we make use of star cutsets associated with wheel centers. A {\em star cutset} of a graph $G$ is a set $C \subseteq V(G)$ such that $G \setminus C$ is not connected and there exists $v \in V(G)$ such that $C \subseteq N[v]$. We call $v$ the {\em center} of the star cutset $C$. Let $(H, v)$ be a wheel of $G$. Then, $v$ is the center of a star cutset $C$ of $G$, and $H$ is not contained in the closed neighborhood of any connected component of $G \setminus C$ (by Lemma \ref{lemma:forcer_lemma}). Therefore, the star cutset with center $v$ ``breaks'' the hole $H$. If $\beta$ is contained in the closed neighborhood of a connected component of $G \setminus C$, then $\beta$ does not contain the wheel $(H, v)$. Therefore, star cutsets associated with wheel centers are a promising way to construct an induced subgraph $\beta$ whose treewidth is easy to compute. 

To obtain property (ii), we make use of the relationship between treewidth and collections of decompositions with a property called ``non-crossing.'' ``Non-crossing decompositions'' interact well with treewidth, and provide a way to obtain a function $f$ such that if $\tw(\beta) \leq c$, then $\tw(G) \leq f(c)$. It turns out that there are natural decompositions corresponding to star cutsets, and in the case of graphs in $\mathcal{C}_t$, these decompositions are ``nearly non-crossing'' (a slight generalization of non-crossing that also cooperates with treewidth). Because of the way the decompositions corresponding to star cutsets are constructed, we obtain an induced subgraph $\beta$ of $G$ such that either $\beta$ is wheel-free or $\beta$ has a balanced separator. To prove that $\beta$ has a balanced separator if it is not wheel-free, we use degeneracy to bound the degree of vertices which are wheel centers in $\beta$ (a similar technique was used in \cite{logpaper}). In either case, $\beta$ has bounded treewidth, and so $G$ has bounded treewidth. 

In previous papers in this series, decompositions corresponding to star cutsets were also used to reduce the problem of bounding the treewidth of $G$ to bounding the treewidth of a ``less complicated'' induced subgraph $\beta$ of $G$. But the collections of decompositions used in previous results were not nearly non-crossing; instead, we used that the graph classes had bounded degree to partition the decompositions into a bounded number of nearly non-crossing collections. In $\mathcal{C}_t$, we were able to slightly modify the star cutsets we consider in order to obtain a single collection of non-crossing decompositions. This eliminated the need for the bounded degree condition required for previous results of this series, and allowed us to prove for the first time that the treewidth of a graph class with unbounded maximum degree is bounded. 

\subsection{Organization} 
The remainder of the paper is organized as follows. In Section \ref{sec:tools}, we define several tools needed to prove that graphs $G \in \mathcal{C}_t$ have bounded treewidth. In Section \ref{sec:bs-in-cbags}, we construct a useful induced subgraph $\beta$ of $G$ and prove that $\beta$ has bounded treewidth. Finally, in Section \ref{sec:extending-bs}, we use that $\beta$ has bounded treewidth to prove that $G$ has bounded treewidth.

\section{Tools}
\label{sec:tools}
In this section, we describe the tools needed to prove that graphs in $\mathcal{C}_t$ have bounded treewidth. These tools fall into four categories: balanced separators, separations, central bags, and cutsets obtained from wheels. 
\subsection{Balanced separators} 
Let $G$ be a graph. A {\em weight function on $G$} is a function $w:V(G) \to \mathbb{R}$. For $X \subseteq V(G)$, we let $w(X) = \sum_{x \in X} w(x)$. 
Let $G$ be a graph, let $w: V(G) \to [0, 1]$ be a weight function on $G$ with $w(G) = 1$, and let $c \in [\frac{1}{2}, 1)$. A set $X \subseteq V(G)$ is a  {\em $(w, c)$-balanced separator} if $w(D) \leq c$ for every component $D$ of $G \setminus X$. The next two lemmas state how $(w, c)$-balanced separators relate to treewidth. The first result was originally proven by Harvey and Wood in \cite{params-tied-to-tw} using different language, and was restated and proved in the language of $(w, c)$-balanced separators in \cite{wallpaper}. 
\begin{lemma}[\cite{wallpaper}, \cite{params-tied-to-tw}]\label{lemma:bs-to-tw}
Let $G$ be a graph, let $c \in [\frac{1}{2}, 1)$, and let $k$ be a positive integer. If $G$ has a $(w, c)$-balanced separator of size at most $k$ for every weight function $w: V(G) \to [0, 1]$ with $w(G) = 1$, then $\tw(G) \leq \frac{1}{1-c}k$. 
\end{lemma}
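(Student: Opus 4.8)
The plan is to construct, recursively and top-down, a tree decomposition of $G$ all of whose bags have size at most $\frac{k}{1-c}$; the bound on $\tw(G)$ then follows immediately. I would first record that the hypothesis passes to induced subgraphs, which is what makes the recursion go through: if $U\subseteq V(G)$ and $w'$ is a weight function on $U$ with $w'(U)=1$, extend $w'$ by $0$ to $V(G)$, take a $(w',c)$-balanced separator $X$ of $G$ with $|X|\le k$, and observe that $X\cap U$ is a $(w',c)$-balanced separator of $G[U]$ of size at most $k$ — each component of $G[U]\setminus(X\cap U)$ is contained in a component of $G\setminus X$, hence has $w'$-weight at most $c$. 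Thus every $G[U]$ also satisfies the hypothesis.

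Next I would process pairs $(W,S)$, called \emph{fragments}, with $S\subseteq W\subseteq V(G)$ and $W\setminus S$ anticomplete to $V(G)\setminus W$, starting from $(V(G),\varnothing)$. To handle a fragment $(W,S)$: if $|W|\le \frac{k}{1-c}$, create a leaf with bag $W$; otherwise choose a weight function $w$ on $V(G)$ supported on $W$, apply the (hereditary) hypothesis to get a $(w,c)$-balanced separator $X$ of $G[W]$ with $|X|\le k$, set the bag at this node to $B:=S\cup X$, and attach one child fragment $(D\cup N_{G[W]}(D),\,N_{G[W]}(D))$ for each component $D$ of $G[W]\setminus B$. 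Since $D$ is a component of $G[W]\setminus B$ we have $N_{G[W]}(D)\subseteq B$, so these are again fragments, and the first coordinate strictly decreases, so the process halts. Checking the tree-decomposition axioms for the resulting $(T,\chi)$ is routine: every vertex and every edge of $G$ lies within some fragment, hence in some bag; at a node with bag $B$ the child fragments are pairwise anticomplete after deleting $B$; and the nodes whose bag contains a fixed vertex $v$ form a subtree, because declaring the child boundary to be $N_{G[W]}(D)$ is exactly what guarantees that a vertex of $B$ reappearing below is reintroduced in the child's root bag.

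The heart of the matter is the size bound $|B|\le\frac{k}{1-c}$, equivalently $|S|\le\frac{ck}{1-c}$ for every fragment, and this is where the choice of $w$ at the parent step is crucial: $w$ must be chosen so that a $(w,c)$-balanced separator cannot leave a large share of the current boundary inside any single component of $G[W]\setminus X$ (for instance a weight function spread uniformly over $S$, or over the parent's bag). With the right choice, the boundary sizes satisfy a contraction of the form $|S_{\mathrm{child}}|\le c\,|B_{\mathrm{parent}}|$ up to lower-order terms, and since the root fragment has empty boundary, this recursion keeps every $|S|$ below the fixed point $\frac{ck}{1-c}$; hence every bag has size at most $\frac{ck}{1-c}+k=\frac{k}{1-c}$. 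Pinning down the weight function and the accompanying bookkeeping so that this geometric series closes at exactly $\frac{k}{1-c}$ (rather than at a larger value) is the main technical obstacle, and is precisely the point handled by the estimates of Harvey and Wood; everything else in the argument is bookkeeping.
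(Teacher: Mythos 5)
The paper does not actually prove this lemma---it is quoted from \cite{params-tied-to-tw} and \cite{wallpaper}---so the comparison below is with the argument in those sources. Your reduction to induced subgraphs, the fragment bookkeeping, and the verification of the tree-decomposition axioms are all fine, but there is a genuine gap exactly where you flag ``the main technical obstacle'': the contraction $|S_{\mathrm{child}}|\le c\,|B_{\mathrm{parent}}|$ is not available, and without it the recursion does not close at $\frac{1}{1-c}k$. With the natural choice of $w$ uniform on the boundary $S$, a $(w,c)$-balanced separator $X$ only guarantees that each component of $G[W]\setminus X$ contains at most $c|S|$ vertices of $S$, hence $|S_{\mathrm{child}}|=|N_{G[W]}(D)|\le c|S|+|X|\le c|S|+k$. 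The fixed point of $s\mapsto cs+k$ is $\frac{k}{1-c}$, not $\frac{ck}{1-c}$, so the bags $S\cup X$ only come out of size at most $\frac{k}{1-c}+k$, giving $\tw(G)\le\frac{2-c}{1-c}k-1$ (for $c=\frac12$, that is $3k-1$ rather than $2k$, and the clique $K_n$ shows $2k$ is essentially tight, so the loss is real). To get $|N(D)|\le c|S\cup X|$ you would need $X$ to be balanced with respect to a weight function supported on $S\cup X$ itself, which is circular because $w$ must be fixed before $X$ is produced; the ``$+k$'' is not a lower-order term, it is precisely the quantity you are trying to avoid. So what you have is a correct proof of the weaker bound $\tw(G)\le\frac{k}{1-c}+k-1$, with the stated constant deferred to the very reference the lemma cites.

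The cited proof reaches $\frac{1}{1-c}k$ by an entirely different, non-recursive route. By the Seymour--Thomas duality, if $\tw(G)+1=p$ then $G$ has a bramble of order $p$; let $H$ be a minimum hitting set of such a bramble, so $|H|=p$, and let $w$ be uniform on $H$. If $X$ is any $(w,c)$-balanced separator, then all bramble elements disjoint from $X$ lie in a single component $D_0$ of $G\setminus X$, and $(H\cap D_0)\cup X$ is again a hitting set, so $p\le|H\cap D_0|+|X|\le cp+|X|$, whence $|X|\ge(1-c)p$. Applying the hypothesis to this one weight function gives $(1-c)(\tw(G)+1)\le k$, which is the claimed bound. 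If you want a self-contained proof of the lemma as stated, this is the argument to reproduce; your construction is the right tool for converting balanced separators into an explicit tree decomposition, but only at the cost of an additive $k$ in the width.
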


\begin{lemma}[\cite{cygan}]
\label{lemma:tw-to-weighted-separator}
Let $G$ be a graph and let $k$ be a positive integer. If $\tw(G) \leq k$, then $G$ has a $(w, c)$-balanced separator of size at most $k+1$ for every $c \in [\frac{1}{2}, 1)$ and for every weight function $w:V(G) \to [0, 1]$ with $w(G) = 1$.
\end{lemma}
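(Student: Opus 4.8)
The plan is to run the classical ``heavy-edge orientation'' argument on a tree decomposition, and in fact to prove the slightly stronger statement that $G$ has a $(w,\tfrac12)$-balanced separator (which is automatically a $(w,c)$-balanced separator for every $c\in[\tfrac12,1)$). Fix a tree decomposition $(T,\chi)$ of $G$ of width at most $k$, so $|\chi(t)|\le k+1$ for all $t\in V(T)$. For an edge $e=t_1t_2$ of $T$, deleting $e$ splits $T$ into two subtrees; let $A_e$ (resp.\ $B_e$) be the union of the bags $\chi(t)$ over the nodes $t$ of the subtree containing $t_1$ (resp.\ $t_2$). From the tree-decomposition axioms one gets $A_e\cup B_e=V(G)$, and, using that the nodes whose bag contains a fixed vertex induce a connected subtree of $T$, also $A_e\cap B_e\subseteq \chi(t_1)\cap\chi(t_2)$. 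In particular $A_e\setminus B_e$ and $B_e\setminus A_e$ are disjoint, so $w(A_e\setminus B_e)+w(B_e\setminus A_e)\le w(G)=1$.

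Next I would orient each edge $e=t_1t_2$ of $T$ toward $t_2$ if $w(B_e\setminus A_e)\ge w(A_e\setminus B_e)$, and toward $t_1$ otherwise. Since $T$ is a finite tree (it has one fewer edge than it has nodes, so the out-degrees cannot all be positive), this orientation has a sink $t^\ast$: a node all of whose incident edges point to it. I claim $\chi(t^\ast)$ is the desired separator. Let $D$ be a component of $G\setminus\chi(t^\ast)$. The set of nodes whose bag meets $V(D)$ forms a nonempty connected subtree of $T$ avoiding $t^\ast$, hence lies in a single component of $T-t^\ast$; let $t_1$ be the neighbour of $t^\ast$ on whose side this subtree lies, and put $e=t_1t^\ast$. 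Then $V(D)\subseteq A_e$, and since $V(D)\cap\chi(t^\ast)=\emptyset$ while $A_e\cap B_e\subseteq\chi(t^\ast)$, we get $V(D)\subseteq A_e\setminus B_e$. As $t^\ast$ is a sink, $e$ is oriented toward $t^\ast$, i.e.\ $w(B_e\setminus A_e)\ge w(A_e\setminus B_e)$; together with $w(A_e\setminus B_e)+w(B_e\setminus A_e)\le 1$ this forces $w(D)\le w(A_e\setminus B_e)\le\tfrac12\le c$. Thus every component of $G\setminus\chi(t^\ast)$ has weight at most $c$, and $|\chi(t^\ast)|\le k+1$, as required.

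I do not expect a real obstacle: the argument is standard, and the only points needing care are the two structural facts about $A_e$ and $B_e$ and the claim that the nodes meeting a connected vertex set span a connected subtree — all immediate from axiom (iii) of tree decompositions — together with the elementary fact that any orientation of the edges of a finite tree has a sink. The hypothesis $c\ge\tfrac12$ is used only to deduce $\tfrac12\le c$ at the very end; the proof really produces a $(w,\tfrac12)$-balanced separator of size at most $k+1$.
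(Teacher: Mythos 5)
The paper gives no proof of this lemma --- it is quoted from Cygan et al.\ --- so there is nothing internal to compare against; your argument is the standard sink-of-an-orientation proof that the citation refers to, and it is correct and complete. In particular, the two structural facts $A_e\cup B_e=V(G)$ and $A_e\cap B_e\subseteq\chi(t_1)\cap\chi(t_2)$, the connectedness of the set of nodes whose bags meet a component $D$, and the existence of a sink are all correctly justified, and the tie-breaking in your orientation rule is harmless because whichever way an edge ends up oriented, the exclusive weight on the side away from the arrowhead is at most $\tfrac{1}{2}$.
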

\subsection{Separations} 
A {\em separation} of a graph $G$ is a triple $(A, C, B)$, where $A, B, C \subseteq V(G)$, $A \cup C \cup B = V(G)$, $A$, $B$, and $C$ are pairwise disjoint, and $A$ is anticomplete to $B$. If $S = (A, C, B)$ is a separation, we let $A(S) = A$, $B(S) = B$, and $C(S) = C$. Two separations $(A_1, C_1, B_1)$ and $(A_2, C_2, B_2)$ are {\em nearly non-crossing} if every component of $A_1 \cup A_2$ is a component of $A_1$ or a component of $A_2$. A separation $(A, C, B)$ is a {\em star separation} if there exists $v \in C$ such that $C \subseteq N[v]$. Let $S_1 = (A_1, C_1, B_1)$ and $S_2 = (A_2, C_2, B_2)$ be separations of $G$. We say $S_1$ is a {\em shield for $S_2$} if $B_1 \cup C_1 \subseteq B_2 \cup C_2$. 

\begin{lemma}
\label{lemma:shields}
Let G be a ($C_4$, diamond)-free graph with no clique cutset, let $v_1, v_2 \in V(G)$, and let $S_1 = (A_1, C_1, B_1)$ and $S_2 = (A_2, C_2, B_2)$ be star separations of $G$ such that $v_i \subseteq C_i \subseteq N[v_i]$, $B_i$ is connected, and $N(B_i) = C_i \setminus \{v_i\}$ for $i = 1, 2$. Suppose that $v_2 \in A_1$ and $B_2 \cap (B_1 \cup (C_1 \setminus \{v_1\})) \neq \emptyset$. Then, $S_1$ is a shield for $S_2$. 
\end{lemma}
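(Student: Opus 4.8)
The plan is to prove the equivalent inclusion $B_1\cup C_1\subseteq B_2\cup C_2$ by locating, one at a time, the three pieces $B_1$, $C_1\setminus\{v_1\}$, and finally $v_1$ inside $B_2\cup C_2$. The starting point is the observation that $v_2\in A_1$ forces $v_2$ to have no neighbour in $B_1$ (since $A_1$ is anticomplete to $B_1$), so $N(v_2)\subseteq A_1\cup C_1$; as $C_2\setminus\{v_2\}=N(B_2)\subseteq N(v_2)$, this yields $(C_2\setminus\{v_2\})\cap B_1=\emptyset$. Since $N(B_2)=C_2\setminus\{v_2\}$ and $B_2$ is connected, $B_2$ is a connected component of $G\setminus(C_2\setminus\{v_2\})$; and since $B_1$ is connected and disjoint from $C_2\setminus\{v_2\}$, it lies in a single component of that same graph.

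Next I would deduce $B_1\subseteq B_2$ from the hypothesis that $B_2$ meets $B_1\cup(C_1\setminus\{v_1\})$. If $B_2$ meets $B_1$, then $B_2$ coincides with the component containing $B_1$, so $B_1\subseteq B_2$. If instead $B_2$ meets $C_1\setminus\{v_1\}=N(B_1)$, pick such a vertex $b$ and a neighbour $b'\in B_1$ of $b$; then $b'\notin C_2\setminus\{v_2\}$, so $b'$ lies in the same component of $G\setminus(C_2\setminus\{v_2\})$ as $b\in B_2$, i.e.\ $b'\in B_1\cap B_2$, and we are back in the first case. Hence $B_1\subseteq B_2$, and therefore $C_1\setminus\{v_1\}=N(B_1)\subseteq N[B_2]=B_2\cup(C_2\setminus\{v_2\})\subseteq B_2\cup C_2$.

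The remaining step --- and the one I expect to be the only genuine obstacle --- is to show $v_1\in B_2\cup C_2$; this is where the hypotheses that $G$ is ($C_4$, diamond)-free and has no clique cutset should be used. Suppose for contradiction that $v_1\in A_2$. Then $v_1$ has no neighbour in $B_2$, so from $C_1\setminus\{v_1\}\subseteq N(v_1)$ and the previous paragraph we get $C_1\setminus\{v_1\}\subseteq C_2\setminus\{v_2\}\subseteq N(v_2)$; thus every vertex of $C_1\setminus\{v_1\}$ is a common neighbour of $v_1$ and $v_2$, and note $v_1\neq v_2$ while $B_1\neq\emptyset$ (since $B_2$ meets $B_1\cup N(B_1)$). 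Now I split on the size of $C_1\setminus\{v_1\}$: if $|C_1\setminus\{v_1\}|\leq 1$, then $C_1\setminus\{v_1\}$ is a clique cutset separating the nonempty set $B_1$ from $v_1$, contradicting the hypothesis; and if $C_1\setminus\{v_1\}$ contains two distinct vertices $u,u'$, then $\{v_1,v_2,u,u'\}$ induces a diamond when $u\sim u'$ or $v_1\sim v_2$, and an induced $C_4$ otherwise --- a contradiction in every case. Therefore $v_1\in B_2\cup C_2$, and combining this with the previous paragraph gives $B_1\cup C_1\subseteq B_2\cup C_2$, that is, $S_1$ is a shield for $S_2$.
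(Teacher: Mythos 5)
Your argument tracks the paper's proof closely for the first two steps: establishing $B_1\subseteq B_2$ from the hypothesis that $B_2$ meets $B_1\cup N(B_1)$, and then pushing $C_1\setminus\{v_1\}$ into $B_2\cup C_2$ via $N(B_1)\subseteq N[B_2]$. Those steps are correct (your route through components of $G\setminus(C_2\setminus\{v_2\})$ is a harmless variant of the paper's route through components of $G\setminus C_2$). The problem is in the final step. When $C_1\setminus\{v_1\}$ contains two distinct vertices $u,u'$, your claim that $\{v_1,v_2,u,u'\}$ ``induces a diamond when $u\sim u'$ or $v_1\sim v_2$'' fails in the case where \emph{both} adjacencies hold: there the four vertices induce $K_4$, which is neither a diamond nor a $C_4$ and is not excluded by the hypotheses (the lemma assumes only ($C_4$, diamond)-freeness, not $K_4$-freeness). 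So the case analysis as written does not close.

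The repair is the one the paper uses, and it is already implicit in your treatment of the degenerate case: $C_1\setminus\{v_1\}=N(B_1)$ separates the nonempty set $B_1$ from $v_1$, so by the no-clique-cutset hypothesis $C_1\setminus\{v_1\}$ is \emph{not} a clique (this subsumes your $|C_1\setminus\{v_1\}|\le 1$ case). Hence you may choose $u,u'\in C_1\setminus\{v_1\}$ with $u\not\sim u'$, and then $\{v_1,v_2,u,u'\}$ is a diamond if $v_1\sim v_2$ and a $C_4$ otherwise --- a contradiction either way. With that single change your proof is complete and essentially identical to the paper's.
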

\begin{proof}
Since $v_2 \in A_1$ and $A_1$ is anticomplete to $B_1$, it follows that $C_2 \subseteq A_1 \cup C_1$, and thus $B_1$ is contained in a connected component of $G \setminus C_2$. First, we show that $B_1 \subseteq B_2$. If there exists $x \in B_1 \cap B_2$, then it holds that $B_1 \subseteq B_2$, so we may assume that $B_1 \cap B_2 = \emptyset$. Consequently, $B_1 \subseteq A_2$ and there exists $x \in (C_1 \setminus \{v_1\}) \cap B_2$. But $x \in B_2$, $B_1 \subseteq A_2$, and $A_2$ is anticomplete to $B_2$, a contradiction. This proves that $B_1 \subseteq B_2$. 

Since every vertex of $C_1 \setminus \{v_1\}$ has a neighbor in $B_1$, and thus in $B_2$, it follows that $C_1 \setminus \{v_1\} \subseteq B_2 \cup C_2$. Now consider $v_1$. If there exists $x \in C_1 \setminus \{v_1\}$ such that $x \in B_2$, then $v_1$ has a neighbor in $B_2$ and so $v_1 \in B_2 \cup C_2$, as required. Thus we may assume that $C_1 \setminus \{v_1\} \subseteq C_2$, and so $v_2$ is complete to $C_1 \setminus \{v_1\}$. 

Since $G$ has no clique cutset and $N(B_1) = C_1$, there exist $x, y \in C_1 \setminus \{v_1\}$ such that $x$ and $y$ are non-adjacent. But now $\{x, y, v_1, v_2\}$ is either a diamond or a $C_4$, a contradiction. 
\end{proof}

Let $G$ be a graph and let $w: V(G) \to [0, 1]$ be a weight function on $G$ with $w(G) = 1$. A vertex $v \in V(G)$ is called {\em balanced} if $w(D) \leq \frac{1}{2}$ for every component $D$ of $G \setminus N[v]$, and {\em unbalanced} otherwise. Let $U$ denote the set of unbalanced vertices of $G$. Let $v \in U$. The {\em canonical star separation for $v$}, denoted $S_v = (A_v, C_v, B_v)$, is defined as follows: $B_v$ is the connected component of $G \setminus N[v]$ with largest weight, $C_v = \{v\} \cup N(v) \cap N(B_v)$, and $A_v = V(G) \setminus (B_v \cup C_v)$. Note that $B_v$ is well-defined since $v \in U$. 

Let $\leq_A$ be the relation on $U$ where for $x, y \in U$, $x \leq_A y$ if and only if $x = y$ or $y \in A_x$. 

\begin{lemma}
\label{lemma:leqA-partial-order}
Let $G$ be a ($C_4$, diamond)-free graph with no clique cutset, let $w:V(G) \to [0, 1]$ be a weight function on $G$ with $w(G) = 1$, let $U$ be the set of unbalanced vertices of $G$, and let $\leq_A$ be the relation on $U$ defined above. Then, $\leq_A$ is a partial order. 
\end{lemma}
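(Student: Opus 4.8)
The plan is to check the three axioms of a partial order on $U$, with essentially all the work funneled through Lemma \ref{lemma:shields}. Reflexivity is immediate: $x \leq_A x$ holds by definition. The key preliminary fact I would establish is that $w(B_v) > \tfrac12$ for every $v \in U$: since $v$ is unbalanced, some component of $G \setminus N[v]$ has weight exceeding $\tfrac12$, and $B_v$ is chosen to have largest weight among these components. Because $w \geq 0$ and $w(G)=1$, this forces $B_x \cap B_y \neq \emptyset$ for all $x,y \in U$ (two disjoint sets of weight more than $\tfrac12$ would overshoot the total weight).

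Next I would verify that each canonical star separation $S_v=(A_v,C_v,B_v)$ satisfies the structural hypotheses of Lemma \ref{lemma:shields}. By construction $v \in C_v \subseteq N[v]$, and $B_v$ is connected since it is a component of $G \setminus N[v]$. Every vertex not in $B_v$ with a neighbour in $B_v$ lies in $N[v]$, and no such vertex can be $v$ itself (as $B_v \cap N[v]=\emptyset$), so $N(B_v) \subseteq N(v)$; hence $N(B_v) = N(v)\cap N(B_v) = C_v \setminus \{v\}$, and $A_v$ is anticomplete to $B_v$, so $S_v$ is a star separation of exactly the required form. With this in hand, the central claim is: \emph{if $x,y\in U$ with $x\neq y$ and $y\in A_x$, then $B_x\cup C_x \subseteq B_y\cup C_y$}. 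To prove it, apply Lemma \ref{lemma:shields} with $S_1=S_x$, $S_2=S_y$, $v_1=x$, $v_2=y$: the hypothesis $v_2\in A_1$ is precisely $y\in A_x$, and $B_2\cap(B_1\cup(C_1\setminus\{v_1\}))\neq\emptyset$ follows from $B_x\cap B_y\neq\emptyset$. Thus $S_x$ is a shield for $S_y$, i.e. $B_x\cup C_x\subseteq B_y\cup C_y$.

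Antisymmetry and transitivity then follow quickly. For antisymmetry, if $x\neq y$ with $y\in A_x$ and $x\in A_y$, the claim gives $B_x\cup C_x\subseteq B_y\cup C_y$ and $B_y\cup C_y\subseteq B_x\cup C_x$, hence equality; but $x\in C_x\subseteq B_y\cup C_y$ contradicts $x\in A_y$, since the three parts of the separation $S_y$ are pairwise disjoint. For transitivity, suppose $x\leq_A y$ and $y\leq_A z$; if two of $x,y,z$ coincide the conclusion is immediate from reflexivity, and using antisymmetry the remaining case has $x,y,z$ pairwise distinct, so $y\in A_x$ and $z\in A_y$. The claim yields $B_x\cup C_x\subseteq B_y\cup C_y$, while $z\in A_y$ forces $z\notin B_y\cup C_y$, hence $z\notin B_x\cup C_x$, i.e. $z\in A_x$, so $x\leq_A z$. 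The only delicate point—and the step I would be most careful about—is the bookkeeping verifying that $S_v$ meets every hypothesis of Lemma \ref{lemma:shields}, in particular that $N(B_v)=C_v\setminus\{v\}$; once that and the weight bound $w(B_v)>\tfrac12$ are in place, the rest is routine.
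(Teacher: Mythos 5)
Your proof is correct and follows essentially the same route as the paper: both arguments funnel antisymmetry and transitivity through Lemma \ref{lemma:shields} via the shield containment $B_x \cup C_x \subseteq B_y \cup C_y$. You additionally spell out the verification that the hypotheses of Lemma \ref{lemma:shields} hold (the weight bound $w(B_v) > \tfrac12$ forcing $B_x \cap B_y \neq \emptyset$, and $N(B_v) = C_v \setminus \{v\}$), which the paper leaves implicit.
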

\begin{proof}
We will show that $\leq_A$ is reflexive, antisymmetric, and transitive. The relation is reflexive by definition. Let $x, y \in U$ be such that $x \neq y$ and suppose that $x \leq_A y$. By Lemma \ref{lemma:shields}, it holds that $S_x$ is a shield for $S_y$, and so $B_x \cup C_x \subseteq B_y \cup C_y$. But $x \in C_x$, so $x \not \in A_y$. Since $x \neq y$, it follows that $y \not \leq_A x$, and so the relation is antisymmetric.

Finally, suppose that $x, y, z \in U$ such that $x \leq_A y$ and $y \leq_A z$, so $y \in A_x$ and $z \in A_y$. By Lemma \ref{lemma:shields}, it follows that $S_x$ is a shield for $S_y$, so $B_x \cup C_x \subseteq B_y \cup C_y$. Since $z \in A_y$, it follows that $z \not \in B_x \cup C_x$, so $z \in A_x$. Therefore, $x \leq_A z$, and the relation is transitive.
\end{proof} 
\subsection{Central bags} 
Let $G$ be a graph. We call a collection $\S$ of separations of $G$ {\em smooth} if the following hold: 
\begin{enumerate}[(i)]
    \item $S_1$ and $S_2$ are nearly non-crossing for all distinct $S_1, S_2 \in \S$; 
    \item There is a set of vertices $v(\S) \subseteq V(G)$ such that there is a bijection $f$ from $v(\S)$ to $\S$ with $v \in C(f(v)) \subseteq N[v]$;
    \item \label{smooth:v-cap-A-empty} $v(\S) \cap A(S) = \emptyset$ for all $S \in \S$. 
\end{enumerate} 

Let $\S$ be a smooth collection of separations of $G$. Then, the {\em central bag for $\S$}, denoted $\beta_\S$, is defined as follows: 
$$\beta_\S = \bigcap_{S \in \S} (B(S) \cup C(S)).$$

Let $G$ be a graph and let $w:V(G) \to [0, 1]$ be a weight function on $G$ with $w(G) = 1$. Let $\S$ be a smooth collection of separations of $G$, and let $\beta_\S$ be the central bag for $\S$. By property \eqref{smooth:v-cap-A-empty} of smooth collections of separations, it holds that $v(\S) \subseteq \beta_\S$. We now define the {\em inherited weight function $w_\S$ on $\beta_\S$} as follows. Fix an ordering $\{v_1, \hdots, v_k\}$ of $v(\S)$. For every $f(v_i) \in \S$, let $A^*(f(v_i))$ be the union of all connected components $D$ of $\bigcup_{1 \leq i \leq k} A(f(v_i))$ such that $D \not \subseteq A(f(v_j))$ for every $j < i$. In particular, $(A^*(f(v_1)), \hdots, A^*(f(v_k))$ is a partition of $\bigcup_{S \in \S} A(S)$.  Now, $w_\S(v_i) = w(v_i) + w(A^*(f(v_i)))$ for all $v_i \in v(\S)$, and $w_\S(v) = w(v)$ for all $v \not \in v(\S)$.

 \begin{lemma}
 \label{lemma:grow-a-separator}
 Let $G$ be a ($C_4$, diamond)-free graph with no clique cutset, let $w: V(G) \to [0, 1]$ be a weight function on $G$ with $w(G) = 1$, and let $c \in [\frac{1}{2}, 1)$. Let $\S$ be a smooth collection of separations of $G$, let $\beta_\S$ be the central bag for $\S$, and let $w_\S$ be the inherited weight function on $\beta_\S$. Suppose that $X \subseteq \beta_\S$ is a $(w_\S, c)$-balanced separator of $\beta_\S$. Then, $Y = X \cup (N[X \cap v(\S)] \cap \beta_\S)$ is a $(w, c)$-balanced separator of $G$.
 \end{lemma}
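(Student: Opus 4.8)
The plan is to bound the weight of every component $D$ of $G \setminus Y$ by $w_\S(D^*)$ for a suitable component $D^*$ of $\beta_\S \setminus X$, and then to invoke the hypothesis that $X$ is a $(w_\S,c)$-balanced separator. The point is that $w_\S(v) = w(v) + w(A^*(f(v)))$ for $v \in v(\S)$ and $w_\S = w$ elsewhere, so moving the weight of the ``off-bag'' pieces $A^*(f(v))$ onto their centres matches exactly the way $\beta_\S$ is built.

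First I would record the structure of $V(G) \setminus \beta_\S = \bigcup_{S \in \S} A(S)$. Since the separations in $\S$ are pairwise nearly non-crossing, a short induction on $|\S|$ (from the two-separation case) shows that every component of $\bigcup_{S \in \S} A(S)$ is a component of $A(S)$ for a single $S$; hence each $A^*(f(v_i))$ is a union of components of the graph $G[V(G) \setminus \beta_\S]$. In particular distinct sets $A^*(f(v_i))$, $A^*(f(v_j))$ are anticomplete, and each component of $A^*(f(v_i))$ is a component of $A(f(v_i))$, so its $G$-neighbourhood lies in $C(f(v_i)) \subseteq N[v_i]$ and, by the previous sentence, inside $\beta_\S$; thus $N_G(A^*(f(v_i))) \subseteq N[v_i] \cap \beta_\S$. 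Also $Y \subseteq \beta_\S$, so $D \cap \beta_\S$ avoids $Y$ for every component $D$ of $G \setminus Y$.

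Next, fix a component $D$ of $G \setminus Y$ with $D \cap \beta_\S \neq \emptyset$ and write $D_\beta = D \cap \beta_\S$. Observe that if $v_i \in X \cap v(\S)$ then $N[v_i] \cap \beta_\S \subseteq Y$, so $A^*(f(v_i))$ is a union of components of $G \setminus Y$, and since $D_\beta \neq \emptyset$ this forces $A^*(f(v_i)) \cap D = \emptyset$. Hence whenever $A^*(f(v_i))$ meets $D$ we have $v_i \in \beta_\S \setminus X$. I would then show $D_\beta$ lies in a single component $D^*$ of $\beta_\S \setminus X$: given $x,y \in D_\beta$, take an $x$--$y$ path in $G \setminus Y$ and reroute each maximal subpath inside an $A^*(f(v_i))$ through $v_i$ --- legitimate since both of its $\beta_\S$-ends lie in $N[v_i] \cap \beta_\S$ and $v_i \in \beta_\S \setminus X$ --- producing an $x$--$y$ walk in $\beta_\S \setminus X$. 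The same reasoning (connectivity of $D$, which meets both $\beta_\S$ and $A^*(f(v_i))$) gives $v_i \in D^*$ for every such $i$. Since $D \subseteq D_\beta \cup \bigcup\{A^*(f(v_i)) : A^*(f(v_i)) \cap D \neq \emptyset\}$ and all these $v_i$ lie in $D^* \cap v(\S)$, a short computation gives $w(D) \leq w_\S(D^*) \leq c$.

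Finally there is the case $D \cap \beta_\S = \emptyset$: then $D$ is a component of $G[V(G) \setminus \beta_\S]$, so $D \subseteq A^*(f(v_m))$ for one $m$ and $D$ is a component of $A(f(v_m))$. If $v_m \notin X$, then $v_m$ lies in a component $D^*$ of $\beta_\S \setminus X$ and $w(D) \leq w(A^*(f(v_m))) \leq w_\S(v_m) \leq w_\S(D^*) \leq c$. The subcase $v_m \in X$ is the main obstacle: here $D$ is sealed off from $\beta_\S$ by $Y$, and nothing in $\beta_\S \setminus X$ has $w_\S$-weight dominating $w(D)$, so one must use that each component of $A(S)$ has weight at most $c$. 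This holds for the collections $\S$ to which the lemma is applied, since each member of $\S$ arises from the canonical star separation $S_v$ of an unbalanced vertex $v$, and then $w(B_v) > \tfrac12$ forces $w(A_v) < \tfrac12 \leq c$. Apart from this subcase, the fussiest parts are the reduction to single-$A(S)$ components and checking that the rerouting in the middle step stays inside $\beta_\S \setminus X$.
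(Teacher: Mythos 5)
Your proof is correct and follows essentially the same route as the paper's: group the sets $A^*(f(v_i))$ according to the component of $\beta_\S \setminus X$ containing $v_i$, and use $w_\S(Q) = w(Q) + w\bigl(\bigcup A^*(f(v_i))\bigr)$ to transfer the balancedness of $X$ in $\beta_\S$ to that of $Y$ in $G$. Your worry about the subcase $D \subseteq A^*(f(v_m))$ with $v_m \in X$ is exactly on point: the paper's proof disposes of it by the same device you propose, silently invoking $w(A_v) \le \tfrac12$ for unbalanced $v$ --- a property of the canonical star separations underlying the intended applications rather than a consequence of the stated hypotheses on arbitrary smooth collections.
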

 \begin{proof}
Let $Q_1, \hdots, Q_m$ be the connected components of $\beta_\S \setminus X$. 
 Let $A_i = \bigcup_{v_j \in Q_i \cap v(\S)} A^*(f(v_j))$ for $1 \leq i \leq m$. Since $N(A^*(f(v)) \subseteq C_v$ and $C_v \subseteq N[v]$ for all $v \in v(\S)$, it follows that for every connected component $D'$ of $G \setminus Y$, either $D' \subseteq Q_i \cup A_i$ or $D' \subseteq A^*(f(v))$ for some $v \in v(\S) \cap X$. Let $D'$ be a component of $G \setminus Y$. Since $A^*(f(v)) \subseteq A_v$ and $w(A_v) \leq \frac{1}{2}$ for every unbalanced vertex $v \in V(G)$, if $D' \subseteq A^*(f(v))$ for some $v \in v(\S)$, then $w(D') \leq \frac{1}{2} \leq c$, so we may assume $D' \not \subseteq A^*(f(v))$. Therefore, $D' \subseteq A_i$ for some $1 \leq i \leq m$. By the definition of $w_\S$, it holds that 
 \begin{align*}
     w_\S(Q_i) &= w(Q_i) +\sum_{v \in Q_i \cap v(\S)} w(A^*f(v))\\
    &= w(Q_i) + w(A_i).
 \end{align*} Since $w_\S(Q_i) \leq c$, it follows that $w(Q_i \cup A_i) \leq c$. Since $D' \subseteq Q_i \cup A_i$, it follows that $w(D') \leq c$ for every connected component $D'$ of $G \setminus Y$. 
 \end{proof}
 
 \subsection{Wheels}
Recall that a {\em wheel} $(H, w)$ is a hole $H$ and a vertex $w \in V(G)$ such that $w$ has at least three pairwise non-adjacent neighbors in $H$. If $(H, w)$ is a wheel, a {\em sector of $(H, w)$} is a path $P \subseteq H$ of length at least one such that the ends of $P$ are adjacent to $w$ and $P^*$ is anticomplete to $w$. A sector of $(H, w)$ is {\em long} if it has length greater than one. 

\begin{lemma}
\label{lemma:common_nbrs}
Let $G$ be an even-wheel-free graph, let $H$ be a hole of $G$, and let $v_1, v_2 \in V(G)$ be adjacent vertices each with at least two non-adjacent neighbors in $H$. Then, $v_1$ and $v_2$ have a common neighbor in $H$. 
\end{lemma}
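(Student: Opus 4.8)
The plan is to argue by contradiction: suppose $v_1,v_2$ are adjacent, each has two non-adjacent neighbors in $H$, but $N(v_1)\cap N(v_2)\cap H=\emptyset$. The key objects to analyze are the attachments $N(v_1)\cap H$ and $N(v_2)\cap H$, viewed as subsets of the cyclic sequence $H$. First I would dispose of the case where $v_1$ (or $v_2$) has three pairwise non-adjacent neighbors in $H$, i.e.\ $(H,v_i)$ is a wheel: then I want to use the even-wheel-free hypothesis to pin down the parity/structure of $N(v_i)\cap H$. Recall $(H,v_i)$ being a wheel that is not an even wheel forces $|N(v_i)\cap H|$ to be odd (and at least $3$); if $(H,v_i)$ is not a wheel but $v_i$ still has two non-adjacent neighbors in $H$, then $N(v_i)\cap H$ has exactly two components (since three pairwise non-adjacent would give a wheel, and with only two ``blocks'' any two non-consecutive attachments suffice), so $|N(v_i)\cap H|$ is a union of at most two sub-paths of $H$ — and being a \emph{line wheel} is forbidden, so it cannot be exactly two disjoint edges.

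Next I would set up the ``interleaving'' picture. Walk around $H$ and record the cyclic pattern of which of $v_1,v_2$ each vertex of $H$ is adjacent to. Since $N(v_1)\cap N(v_2)\cap H=\emptyset$, every vertex of $H$ is adjacent to at most one of $v_1,v_2$. The hole $H$ is thus partitioned into maximal arcs that are ``$v_1$-only'', ``$v_2$-only'', or ``neither''. Because each $v_i$ has two non-adjacent neighbors in $H$, each $v_i$ either owns $\ge 2$ arcs, or owns a single arc of length $\ge 2$ (i.e.\ spanning $\ge 3$ vertices) that contains two non-adjacent vertices — wait, a single arc is a sub-path, and non-adjacent vertices in it are at distance $\ge 2$, which is fine as long as the arc has $\ge 3$ vertices. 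I then build a new hole: take a shortest sub-path $R_1$ of $H$ between two non-adjacent neighbors of $v_1$ such that $R_1^*$ misses $v_2$, and similarly $R_2$ for $v_2$, chosen so that $R_1$ and $R_2$ together with $v_1$ and $v_2$ close up into a cycle. Concretely, since $N(v_1)\cap N(v_2)\cap H=\emptyset$, I can find two vertices $a,b\in H$ with $a\in N(v_1)\setminus N(v_2)$, $b\in N(v_2)\setminus N(v_1)$, and one of the two arcs of $H$ from $a$ to $b$, call it $Q$, such that $v_1$'s neighbors strictly inside $Q$ are confined to one end and $v_2$'s to the other — then $v_1 \dd a \dd Q \dd b \dd v_2 \dd v_1$ is a hole $H'$, and $v_1$ (resp.\ $v_2$) has a controlled, small attachment to $H'$.

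The main obstacle — and where the real work is — is the bookkeeping on $H'$: I must ensure $H'$ is genuinely a hole (length $\ge 4$: this uses that $v_1,v_2$ have \emph{non-adjacent} neighbors in $H$, so the arc $Q$ is long enough and no unwanted chords appear because $H$ was induced and $N(v_i)\cap H$ has the restricted shape from step~one), and then compute $N(v_1)\cap H'$ and $N(v_2)\cap H'$ and derive a forbidden configuration. The target contradictions are: either $v_1$ has exactly two non-adjacent neighbors on $H'$ forming two disjoint edges with the rest (a line wheel), or $|N(v_1)\cap H'|$ is even and not a single edge (an even wheel), or symmetric statements for $v_2$ — and I expect that after choosing $Q$ to make the attachments as small as possible, $v_1$ and $v_2$ each attach to $H'$ in at most a couple of vertices near their ``own'' end, so a short parity/case check on the possible attachment sizes ($2,3,4,\dots$) produces one of these forbidden wheels. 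I would organize this final case analysis by the sizes $|N(v_1)\cap H|$ and $|N(v_2)\cap H|$ modulo $2$ and whether either is a wheel in $H$, reducing to a bounded number of small configurations.
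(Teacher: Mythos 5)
Your proposal has a genuine gap at its crucial final step, and the gap is built into the construction of the auxiliary hole. You form $H' = v_1 \dd a \dd Q \dd b \dd v_2 \dd v_1$, which contains both $v_1$ and $v_2$ as vertices of the hole, and then propose to reach a contradiction by showing that $v_1$ or $v_2$ forms a line wheel or even wheel with $H'$. But a (line or even) wheel requires the center to be a vertex attaching to the hole from outside: a vertex of $H'$ has exactly two neighbors on $H'$ and can never be a wheel center for it, so none of the ``target contradictions'' you list is available, and your plan never identifies a third vertex that could play the role of the center. On top of this structural problem, the proposal explicitly defers the substantive verification (that $H'$ is induced, the attachment bookkeeping, the final case analysis), so it is a sketch rather than a proof.

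The repair is a re-routing of the same idea, and it is what the paper does: build the auxiliary hole through $v_1$ \emph{only}, keeping $v_2$ outside so that $v_2$ can serve as the center. For each long sector $Q$ of $(H,v_1)$, the set $Q\cup\{v_1\}$ is a hole containing the neighbor $v_1$ of $v_2$; since $v_1$ and $v_2$ have no common neighbor in $H$, the neighbors of $v_2$ in this hole cannot form a single edge, so even-wheel-freeness forces $|N(v_2)\cap(Q\cup\{v_1\})|$ to be odd, i.e.\ $|N(v_2)\cap Q|$ to be even. Because every neighbor of $v_2$ in $H$ lies in the interior of a sector of $(H,v_1)$ (again by the absence of common neighbors), summing over sectors shows $|N(v_2)\cap H|$ is even; as $v_2$ has two non-adjacent neighbors in $H$ by hypothesis, $(H,v_2)$ is itself an even wheel, a contradiction. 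Note that this, like the parity analysis in your first step, relies on the convention that a vertex with an even number of neighbors in a hole, not forming a single edge, already yields an even wheel.
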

\begin{proof}
Suppose that $v_1$ and $v_2$ have no common neighbors in $H$. Let $Q \subseteq H$ be a long sector of $(H, v_1)$. Then, $Q \cup \{v_1\}$ is a hole. Since $v_2$ is adjacent to $v_1$ and $G$ is even-wheel-free, it follows that $v_2$ has an odd number of neighbors in $Q \cup \{v_1\}$, and thus $v_2$ has an even number of neighbors in $Q$. Since $v_1$ and $v_2$ have no common neighbors in $H$, every neighbor of $v_2$ in $H$ is in the interior of a sector of $(H, v_1)$. Therefore, $v_2$ has an even number of neighbors in $H$. Since $G$ is even-wheel-free, $v$ has exactly two neighbors in $H$ and they are adjacent, a contradiction. 
\end{proof}


 A wheel $(H, w)$ is a {\em twin wheel} if $N(w) \cap H$ is a path of length two. A wheel $(H, w)$ is a {\em short pyramid} if $|N(w) \cap H| = 3$ and $w$ has exactly two adjacent neighbors in $H$. A {\em proper wheel} is a wheel that is not a twin wheel or a short pyramid.  A wheel $(H, w)$ is a {\em universal wheel} if $w$ is complete to $H$. We will use the following result about wheels and star cutsets. In \cite{daSilvaVuskovic}, Lemma \ref{lemma:forcer_lemma} is proven for a class of graphs called {\em $C_4$-free odd-signable graphs}. It is also shown in \cite{daSilvaVuskovic} that ($C_4$, even wheel, theta, prism)-free graphs are $C_4$-free odd-signable graphs. Since we are interested in ($C_4$, even wheel, theta, prism)-free graphs, we state Lemma \ref{lemma:forcer_lemma} about ($C_4$, even wheel, theta, prism)-free graphs. 

\begin{lemma}[\cite{daSilvaVuskovic}]
Let $G$ be a ($C_4$, even wheel, theta, prism)-free graph that contains a proper wheel $(H, x)$
that is not a universal wheel. Let $x_1$ and $x_2$ be the endpoints of a long sector $Q$ of $(H, x)$. Let $W$
be the set of all vertices $h \in H \cap N(x)$ such that the subpath of $H \setminus \{x_1\}$ from $x_2$ to $h$ contains
an even number of neighbors of $x$, and let $Z = H \setminus (Q \cup N(x))$. Let $N' = N(x) \setminus W$. Then,
$N' \cup \{x\}$ is a cutset of $G$ that separates $Q^*$
from $W \cup Z$.
\label{lemma:forcer_lemma}
\end{lemma}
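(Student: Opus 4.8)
This is the forcer lemma of \cite{daSilvaVuskovic}: it is proved there for $C_4$-free odd-signable graphs, and, as also shown in \cite{daSilvaVuskovic}, every $(C_4,\text{even wheel},\text{theta},\text{prism})$-free graph is $C_4$-free odd-signable, so for the purposes of this paper one simply invokes that result. For a self-contained argument within these hypotheses, here is the plan. Label the neighbours of $x$ on $H$ as $h_1=x_1,h_2,\dots,h_k=x_2$ in cyclic order, so that $Q$ is the sector joining $h_k$ to $h_1$, and write $\sigma_i$ for the interior of the sector between $h_i$ and $h_{i+1}$. A direct check shows that in $H\setminus(N'\cup\{x\})$ the set $Q^*$ is exactly one connected component while every other surviving vertex of $H$ lies in $W\cup Z$; so it is enough to show that no path of $G$ joins $Q^*$ to $W\cup Z$ while avoiding $N'\cup\{x\}$.

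Suppose such a path exists and let $R$ be one with $|V(R)|$ minimum, say from $q\in Q^*$ to $p\in W\cup Z$. The standard consequences of minimality give: $R$ is induced, $V(R)\cap H=\{q,p\}$, $x\notin V(R)$, $R^*$ is disjoint from $N[x]$ (so $x$ has a neighbour on $R$ only possibly at $p$), and, after a routine cleaning, $R^*$ is anticomplete to $H$; also $|R|\ge 2$, since $q$ and $p$ are non-adjacent on the hole $H$. Let $L$ and $L'$ be the two arcs of $H$ from $q$ to $p$, with $L$ the one through $h_k$. Then $C:=L\cup R$ and $C':=L'\cup R$ are holes, the sets $N(x)\cap V(C)$ and $N(x)\cap V(C')$ are complementary segments of the neighbour-sequence $h_1,\dots,h_k$, and the definition of $W$ is arranged precisely so that $|N(x)\cap V(C)|$ is even whenever $p\in W$.

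I would then derive a contradiction by a case analysis on a hole among $C,C'$ that carries an even number of neighbours of $x$ --- this is $C$ when $p\in W$, and when $p\in Z$ one produces such a hole from the parity of $k$ together with the complementarity of the two segments, the residual parities being handled directly. On such a hole: if $x$ has three pairwise non-adjacent neighbours, then the hole together with $x$ is an even wheel (its neighbour set has even size $\ge 2$ and is not a single edge), a contradiction; if $x$ has exactly two neighbours $h,h'$ on the hole and they are non-adjacent, then the two sub-arcs of the hole between $h$ and $h'$ together with the length-two path through $x$ form a theta, again a contradiction. The leftover configurations --- $x$ having a ``path-like'' neighbour set on the hole, or exactly two adjacent neighbours, or $p\in Z$ with the unfavourable parity --- are disposed of by playing $C$ and $C'$ off against each other, using that $Q$ is long (so $h_1\not\sim h_k$, which keeps the relevant interiors anticomplete) and that $(H,x)$ is a proper, non-universal wheel (so neither a twin wheel nor a short pyramid), to exhibit a $C_4$, a prism, a theta, or an even wheel in $G$.

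I expect this last bundle of degenerate cases to be the main obstacle: once the obvious hole fails to contain three pairwise non-adjacent neighbours of $x$, one is forced to combine both $C$ and $C'$ and track the parities of the neighbour-counts carefully. That bookkeeping is the technical heart of the argument, and is the reason the lemma is cleanest in the odd-signable setting, where exactly this parity condition is built in.
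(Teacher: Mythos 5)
Your proposal takes the same route as the paper: Lemma~\ref{lemma:forcer_lemma} is stated here purely as an imported result from \cite{daSilvaVuskovic}, with the only content being the observation (which you correctly reproduce) that ($C_4$, even wheel, theta, prism)-free graphs are $C_4$-free odd-signable, so the forcer lemma of that paper applies verbatim. The additional self-contained sketch you offer is not needed for the paper's purposes and is, as you acknowledge, incomplete in its degenerate-case analysis, but the citation alone already matches what the paper does.
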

In particular, Lemma \ref{lemma:forcer_lemma} implies the following. 
\begin{lemma}
\label{lemma:no_wheels}
Let $G$ be a ($C_4$, even wheel, theta, prism)-free graph and let $(H, v)$ be a proper wheel of $G$ that is not a universal wheel. Let $(A, C, B)$ be a separation of $G$ such that $v \in C \subseteq N[v]$, $B$ is connected, and $N(B) = C \setminus \{v\}$. Then, $H \not \subseteq B \cup C$. 
\end{lemma}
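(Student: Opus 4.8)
The plan is to argue by contradiction: assume $H \subseteq B \cup C$, apply Lemma \ref{lemma:forcer_lemma} to $(H,v)$ to get a star cutset separating two nonempty parts of $H$, and then exhibit a path between those two parts lying inside $B$ together with one extra vertex, which avoids the cutset and hence contradicts Lemma \ref{lemma:forcer_lemma}.

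First I would collect the easy observations. Since $(H,v)$ is a wheel, $v$ has at least three pairwise non-adjacent neighbours in $H$, so $v \notin V(H)$ (a vertex of a hole has exactly two neighbours in it, and they are non-adjacent). Since $(H,v)$ is not a universal wheel, some sector of $(H,v)$ has length at least two, for otherwise all sectors are edges and $H \subseteq N(v)$; fix such a long sector $Q$ with ends $x_1, x_2$. Now Lemma \ref{lemma:forcer_lemma} applies with $x = v$ and produces sets $W$, $Z$ and $N' = N(v) \setminus W$ such that $K := N' \cup \{v\}$ is a cutset of $G$ separating $Q^*$ from $W \cup Z$. Here $Q^* \neq \emptyset$ because $Q$ is long, and $W \neq \emptyset$: letting $x_3$ be the neighbour of $v$ in $H$ immediately after $x_2$ along $H \setminus \{x_1\}$ (it exists as $|N(v) \cap H| \geq 3$), the subpath of $H \setminus \{x_1\}$ from $x_2$ to $x_3$ contains exactly the two $v$-neighbours $x_2, x_3$, so $x_3 \in W$. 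Finally, $Q^*$ and $Z$ are anticomplete to $v$ and $W$ is disjoint from $N'$, so each of $Q^*, W, Z$ is disjoint from $K \subseteq N[v]$.

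Next I would read off what the separation gives. From $N(B) = C \setminus \{v\}$ we get that $v$ has no neighbour in $B$ and $v \notin B$, so $K \subseteq N[v]$ is disjoint from $B$; since $B$ is connected, $G[B]$ is a connected subgraph of $G \setminus K$. From $C \subseteq N[v]$ we get $C \setminus \{v\} \subseteq N(v)$, and since $v \notin V(H)$ this gives $(H \setminus N(v)) \cap C = \emptyset$; hence, under the assumption $H \subseteq B \cup C$, every vertex of $H$ not adjacent to $v$ lies in $B$. In particular $Q^* \subseteq B$ and $Z \subseteq B$.

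Finally I would derive the contradiction. Fix $q \in Q^* \subseteq B$ and any $s \in W \cup Z$. If $s \in Z$ then $s \in B$, and a $q$--$s$ path inside the connected graph $G[B]$ avoids $K$. If $s \in W$ then $s \in N(v) \cap H$, so $s \notin B$, and since $s \neq v$ and $H \subseteq B \cup C$ we get $s \in C \setminus \{v\} = N(B)$; choosing a neighbour $b$ of $s$ in $B$, the walk $s, b, \dots, q$ (finishing with a $b$--$q$ path in $G[B]$) has all vertices but $s$ in $B$, and $s \in W$ is not in $K$, so this walk avoids $K$. Either way we obtain a path in $G \setminus K$ joining $Q^*$ to $W \cup Z$, contradicting Lemma \ref{lemma:forcer_lemma}; therefore $H \not\subseteq B \cup C$. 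The only points requiring care are the bookkeeping that $W \cup Z \neq \emptyset$ and that $Q^*, W, Z$ together with the constructed paths stay off the cutset $K$; everything else is immediate from Lemma \ref{lemma:forcer_lemma} and the definition of the separation.
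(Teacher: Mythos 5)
Your proof is correct and takes essentially the same approach as the paper's: both apply Lemma \ref{lemma:forcer_lemma} to get two vertices of $H$ separated by the star cutset $N' \cup \{v\}$, then use that $B$ is connected and $N(B) = C \setminus \{v\}$ to construct a path between them whose interior lies in $B$ (hence is anticomplete to $v$ and avoids the cutset), a contradiction. Your write-up merely makes explicit a few points the paper leaves implicit, namely that $Q^*$ and $W$ are nonempty and the case split between $s \in W$ and $s \in Z$.
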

\begin{proof}
By Lemma \ref{lemma:forcer_lemma}, there exist $x, y \in H$ such that there is no path from $x$ to $y$ with interior anticomplete to $v$. Suppose that $x, y \in B \cup C$. Since $B$ is connected and every vertex of $C \setminus \{v\}$ has a neighbor in $B$, it follows that there is a path $P$ from $x$ to $y$ with $P^* \subseteq B$. But $N(B) = C \setminus \{v\}$, and so $v$ is anticomplete to $P^*$, a contradiction. It follows that $\{x, y\} \not \subseteq B \cup C$, and so $H \not \subseteq B \cup C$. 
\end{proof}

 \section{Balanced separators and central bags}
 \label{sec:bs-in-cbags}
In this section, we construct a useful central bag for graphs in $\mathcal{C}_t$ and prove that the central bag has bounded treewidth. First, we state the observation that clique cutsets do not affect treewidth (this is a special case of Lemma 3.1 from \cite{clique-cutsets-tw}). 
 
\begin{lemma}
\label{lemma:clique-cutsets-tw}
Let $G$ be a graph. Then, the treewidth of $G$ is equal to the maximum treewidth over all induced subgraphs of $G$ with no clique cutset. 
\end{lemma}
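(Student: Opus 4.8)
The plan is to reduce the statement to the classical fact that tree decompositions can be glued along clique cutsets. First I would recall the standard observation: if $G$ has a clique cutset $K$ and $G_1, G_2$ are two induced subgraphs with $G_1 \cup G_2 = G$, $G_1 \cap G_2 = K$, and no edges between $G_1 \setminus K$ and $G_2 \setminus K$, then $\tw(G) = \max(\tw(G_1), \tw(G_2))$. The inequality $\tw(G) \geq \max(\tw(G_1), \tw(G_2))$ is immediate since $G_1, G_2$ are induced subgraphs. For the other direction, take optimal tree decompositions $(T_1, \chi_1)$ of $G_1$ and $(T_2, \chi_2)$ of $G_2$; since $K$ is a clique, there is a node $t_i \in V(T_i)$ with $K \subseteq \chi_i(t_i)$ for $i = 1, 2$, and we glue $T_1$ and $T_2$ by adding an edge between $t_1$ and $t_2$. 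One checks directly that this yields a valid tree decomposition of $G$ of width $\max(\tw(G_1),\tw(G_2))$: every vertex and edge of $G$ lies in one of the two pieces, and connectivity of $\chi^{-1}(v)$ for $v \in K$ holds because both pieces contain $v$ in the bags $\chi_i(t_i)$ adjacent to the new edge.

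Next I would set up the induction that gives the lemma. Let $m$ denote the maximum treewidth over all induced subgraphs of $G$ with no clique cutset. Clearly $\tw(G) \geq m$, since every such induced subgraph is an induced subgraph of $G$ and treewidth is monotone under taking induced (indeed, any) subgraphs. For the reverse inequality $\tw(G) \leq m$, I would induct on $|V(G)|$. If $G$ has no clique cutset, then $G$ itself is one of the subgraphs considered, so $\tw(G) \leq m$ and we are done. Otherwise, let $K$ be a clique cutset of $G$, and write $G = G_1 \cup G_2$ as above, where $G_1, G_2$ are proper induced subgraphs of $G$ (each being $K$ together with the union of some, but not all, components of $G \setminus K$). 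Every induced subgraph of $G_i$ with no clique cutset is also an induced subgraph of $G$ with no clique cutset, so the quantity $m_i$ defined for $G_i$ satisfies $m_i \leq m$. By the induction hypothesis applied to $G_1$ and $G_2$, we get $\tw(G_i) \leq m_i \leq m$, and then the gluing fact gives $\tw(G) = \max(\tw(G_1), \tw(G_2)) \leq m$.

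The only mildly delicate point — and the step I would expect to need the most care — is verifying that the glued tree decomposition genuinely satisfies condition (iii) (connectivity of the set of bags containing a fixed vertex): for vertices in $V(G_1) \setminus K$ and $V(G_2) \setminus K$ this is inherited from the respective pieces, but for $v \in K$ one must use that $v$ appears in $\chi_1(t_1)$ and $\chi_2(t_2)$, which are exactly the bags at the two endpoints of the newly added edge, so the union of the two connected subtrees remains connected. Everything else is bookkeeping. Since the paper cites this as a special case of Lemma 3.1 of \cite{clique-cutsets-tw}, I would keep the write-up brief, essentially recording the gluing argument and the induction, rather than reproving the cited lemma in full generality.
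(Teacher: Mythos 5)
Your proof is correct. The paper itself gives no proof of this lemma---it is stated as a special case of Lemma 3.1 of \cite{clique-cutsets-tw} (Bodlaender and Koster's ``safe separators'' result)---and your gluing-plus-induction argument is precisely the standard proof underlying that citation: the key facts you use (every clique of $G_i$ is contained in some bag of any tree decomposition of $G_i$, and the glued decomposition satisfies the connectivity condition via the new edge $t_1t_2$) are exactly the right ones, and the induction on $|V(G)|$ with the observation that clique-cutset-free induced subgraphs of $G_i$ are clique-cutset-free induced subgraphs of $G$ closes the argument.
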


Because of Lemma \ref{lemma:clique-cutsets-tw}, we often assume that the graphs we work with do not have clique cutsets. The next lemma examines how three vertices can have neighbors in a connected subgraph. 
 \begin{lemma}[\cite{wallpaper}]
\label{lemma:three_vtx_attachments} 
Let $x_1, x_2, x_3$ be three distinct vertices of a graph $G$. Assume that $H$ is a connected
induced subgraph of $G \setminus \{x_1, x_2, x_3\}$ such that $H$ contains at least one neighbor of each of $x_1, x_2,
x_3$, and that subject to these conditions $V(H)$ is minimal subject to inclusion. Then one of the
following holds:
\begin{enumerate}[(i)]
    \item For distinct $i, j, k \in \{1, 2, 3\}$, there exists $P$ that is either a path from $x_i$ to $x_j$ or a
hole containing the edge $x_ix_j$ such that
\begin{itemize}
\item $H = P \setminus \{x_i, x_j\}$, and
\item either $x_k$ has at least two non-adjacent neighbors in $H$ or $x_k$ has exactly two neighbors
in H and its neighbors in H are adjacent.
\end{itemize}
\item There exists a vertex $a \in H$ and three paths $P_1, P_2, P_3$, where $P_i$ is from $a$ to $x_i$, such that
\begin{itemize}
\item $H = (P_1 \cup P_2 \cup P_3) \setminus \{x_1, x_2, x_3\}$, and
\item the sets $P_1 \setminus \{a\}$, $P_2 \setminus \{a\}$ and $P_3 \setminus \{a\}$ are pairwise disjoint, and
\item for distinct $i, j \in \{1, 2, 3\}$, there are no edges between $P_i \setminus \{a\}$ and $P_j \setminus \{a\}$, except
possibly $x_ix_j$.
\end{itemize}
\item There exists a triangle $a_1a_2a_3$ in $H$ and three paths $P_1, P_2, P_3$, where $P_i$ is from $a_i$ to $x_i$,
such that
\begin{itemize}
\item $H = (P_1 \cup P_2 \cup P_3) \setminus \{x_1, x_2, x_3\}$, and
\item the sets $P_1$, $P_2$, and $P_3$ are pairwise disjoint, and
\item for distinct $i, j \in \{1, 2, 3\}$, there are no edges between $P_i$ and $P_j$ , except $a_ia_j$ and
possibly $x_ix_j$. 
\end{itemize}
\end{enumerate}
\end{lemma}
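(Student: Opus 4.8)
The plan is to read off the structure of $H$ from its minimality: a connected subgraph that is minimal subject to meeting $N(x_1)$, $N(x_2)$, and $N(x_3)$ must be ``tree-like'', namely an induced path, an induced subdivision of $K_{1,3}$, or the latter with its branch point blown up to a triangle. Write $A_i = N_G(x_i) \cap V(H)$; each $A_i$ is nonempty by hypothesis, and since $H$ is connected and each $x_i$ has a neighbour in $H$, the graph $G[V(H) \cup \{x_1, x_2, x_3\}]$ is connected. I would first clear away the degenerate situations: if $H$ has a vertex lying in two of the $A_i$, or a single vertex meeting all three, then minimality forces $H$ to be a single vertex or a very short induced path, landing immediately in outcome (i) or (ii). So from now on the $A_i$ may be assumed pairwise disjoint and $|V(H)| \geq 2$.

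The engine of the proof is to pass to a skeleton. Choose a minimal connected subgraph $T$ of $G$ with $\{x_1, x_2, x_3\} \subseteq V(T)$ and $V(T) \setminus \{x_1, x_2, x_3\} \subseteq V(H)$; such a $T$ exists by the connectivity just noted. Being minimal, $T$ is a tree, and any leaf of $T$ outside $\{x_1,x_2,x_3\}$ could be deleted without destroying connectivity or the containment of the $x_i$, so every leaf of $T$ is one of $x_1, x_2, x_3$. Hence $T$ has at most three leaves: either $T$ is a path containing all three $x_i$ (two of them as its ends, the third internal), or $T$ is a subdivision of $K_{1,3}$ whose leaves are exactly $x_1, x_2, x_3$, with a single branch vertex $b$. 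In the first case $T$ is the skeleton of outcome (i), with $x_k$ the internal terminal; in the second it is the skeleton of (ii) or (iii), with $b$ in the role of $a$. One then shows that $V(H)$ equals $V(T) \setminus \{x_1, x_2, x_3\}$ together with at most the few extra ``near-branch'' vertices needed to make $H$ connected in the branched case: any further vertex of $H$, taken with a shortest path back to the skeleton, could either be discarded or used to reroute the skeleton, producing a connected subgraph meeting all three neighbourhoods and strictly smaller than $H$ --- contradicting the choice of $H$.

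The substance of the argument, and the step I expect to be the main obstacle, is promoting this crude skeleton to the exact statement: that the $P_i$ are induced paths, that in (i) the attachment of $x_k$ to $H$ has precisely the restricted form described (at least two nonadjacent neighbours, or exactly two adjacent ones), and above all that in (ii)/(iii) the three legs are pairwise anticomplete except for the possible edges $x_ix_j$, with the branch being either a single vertex (giving (ii)) or exactly a triangle $a_1a_2a_3$ with no further structure (giving (iii)). Each of these is proved by contradiction with minimality: an unexpected chord between two legs, a chord inside a leg, or a leg whose terminal has a ``deep'' neighbour always yields a shortcut, that is, a connected induced subgraph of $H$, properly contained in $H$, still containing a neighbour of each of $x_1, x_2, x_3$. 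The two genuinely delicate points are (a) verifying that each such shortcut really does retain a neighbour of all three terminals --- this is where the disjointness of the $A_i$ and the uniqueness of the legs' attachment points are used --- and (b) organising the case split so that the path outcome (i) and the two branched outcomes (ii) and (iii) are cleanly separated, in particular isolating exactly when the branch point can be contracted to a single vertex and when a triangle must remain. Once all chords are eliminated and all attachments controlled, deciding which of (i), (ii), (iii) holds is immediate from the shape of the skeleton.
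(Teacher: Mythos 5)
The paper does not actually prove this lemma: it is imported verbatim from \cite{wallpaper}, so there is no in-paper argument to compare yours against. Judged on its own terms, your proposal is a sensible skeleton (minimal Steiner tree on $\{x_1,x_2,x_3\}$ through $H$, hence a path or a subdivision of $K_{1,3}$) but it is not yet a proof: you explicitly defer what you yourself call ``the substance of the argument'' --- eliminating chords, pinning down the attachment of $x_k$ in outcome (i), and separating (ii) from (iii) --- and those deferred steps are where essentially all of the content of the lemma lives.

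The concrete gap is the blanket claim that ``an unexpected chord between two legs, a chord inside a leg, or a leg whose terminal has a deep neighbour always yields a shortcut.'' First, this is false as stated: the permitted edges $x_ix_j$, the triangle $a_1a_2a_3$ at the branch in (iii), and the ``exactly two adjacent neighbours'' option for $x_k$ in (i) are precisely chords and attachments that do \emph{not} yield shortcuts, and the lemma is exactly the classification of which configurations survive minimality. Second, even for the chords that are fatal, the shortcut is not the obvious one: if $p\in P_1\setminus\{a\}$ is adjacent to $q\in P_2\setminus\{a\}$, the set formed by the two half-legs joined by that chord loses the neighbour of $x_3$, so it does not contradict minimality; the correct move is to delete the segment of $P_1$ strictly between $a$ and $p$ and verify that what remains is still connected (through $q$, $P_2$, and $a$) and still meets all three neighbourhoods --- and this fails to produce a smaller set exactly when $p$ and $q$ are the neighbours of $a$ on their legs, which is how the triangle of outcome (iii) arises. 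So your worry (a) identifies the right difficulty, but naming it is not resolving it. Likewise, the assertion that $V(H)$ equals $V(T)\setminus\{x_1,x_2,x_3\}$ plus ``a few extra near-branch vertices'' is unproved, and the minimality argument you sketch does not close when one of the terminals is an internal vertex of $T$ (then $T\setminus\{x_1,x_2,x_3\}$ is disconnected). A complete write-up would carry out the chord and attachment analysis explicitly, e.g.\ by taking a shortest path $P$ in $H$ between $N(x_1)\cap H$ and $N(x_2)\cap H$, a shortest path from $N(x_3)\cap H$ to $P$, deducing from minimality that $H$ is their union, and then classifying the possible extra edges one by one.
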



Using Lemma \ref{lemma:three_vtx_attachments}, we prove the following theorem, which can also be easily deduced from Lemma 1.1 of \cite{isk4}. 

\begin{theorem}
\label{thm:theta-triangle-wheel-free}
Let $G$ be a (theta, triangle, wheel)-free graph. Then, $\tw(G) \leq 2$. 
\end{theorem}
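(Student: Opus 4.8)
The plan is to show that $G$ has no $K_4$ minor; since a graph has treewidth at most two if and only if it has no $K_4$ minor, this gives $\tw(G)\le 2$. By Lemma~\ref{lemma:clique-cutsets-tw} it suffices to bound the treewidth of the induced subgraphs of $G$ with no clique cutset; such a subgraph, being triangle-free, is either $2$-connected or has at most two vertices (in which case its treewidth is at most one), so we may assume $G$ itself is $2$-connected. Suppose for a contradiction that $G$ has a $K_4$ minor. As $K_4$ is subcubic, $G$ then contains a subdivision of $K_4$ as a subgraph; among all such subdivisions choose $J$ with $|V(J)|$ minimum, let $x_1,x_2,x_3,x_4$ be its branch vertices, and for distinct $i,j$ let $P_{ij}$ be the path of $J$ joining $x_i$ and $x_j$. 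Call an edge $x_ix_j$ of $K_4$ \emph{long} if $P_{ij}$ has length at least two.

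The clean part of the argument is the endgame, assuming $J$ is an induced subgraph of $G$. Since $G$, hence $J$, is triangle-free, every triangle of $K_4$ contains a long edge (otherwise its three branch vertices would form a triangle of $J$), so the set $S$ of long edges is a transversal of the four triangles of $K_4$; a short finite check on $K_4$ then shows that either (a) $S$ contains a perfect matching of $K_4$, or (b) $S$ is exactly the edge set of a triangle of $K_4$ (equivalently, the non-long edges form a star $K_{1,3}$). In case (a), after renaming write the matching as $\{x_1x_2,x_3x_4\}$; then $P_{12}$, $P_{13}\cup P_{32}$ and $P_{14}\cup P_{42}$ are internally disjoint paths from $x_1$ to $x_2$, each of length at least two, and since $J$ is induced and $x_3x_4$ is long (so $x_3\not\sim x_4$) their interiors are pairwise anticomplete, so they form an induced theta between $x_1$ and $x_2$ --- contradicting theta-freeness. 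In case (b), say the non-long edges are $x_4x_1,x_4x_2,x_4x_3$, so $x_4$ is complete to $\{x_1,x_2,x_3\}$ while $P_{12},P_{13},P_{23}$ are long; then $H:=P_{12}\cup P_{13}\cup P_{23}$ is a hole (induced since $J$ is) on which $x_1,x_2,x_3$ are pairwise nonadjacent, so $(H,x_4)$ is a wheel --- contradicting wheel-freeness.

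The real work, and the step I expect to be the main obstacle, is reducing to the case that $J$ is induced, i.e.\ ruling out chords of $G$ with both ends in $V(J)$. A chord with both ends on a single $P_{ij}$, or with both ends branch vertices, lets one shorten or replace a path and so contradicts the minimality of $|V(J)|$. A chord with ends on two paths sharing a branch vertex can be inserted as one path of a $K_4$-subdivision on a shifted set of branch vertices; minimality of $|V(J)|$ then forces adjacencies among the remaining branch vertices which, together with triangle-freeness, either yield a triangle outright or collapse the configuration into case (a) or (b) above. This is precisely the kind of local analysis Lemma~\ref{lemma:three_vtx_attachments} is built for: applied to three branch vertices of $J$ together with a minimal connected subgraph attached to all three (such a subgraph exists, coming from the three paths meeting the fourth branch vertex), its outcome (iii) is excluded by triangle-freeness, its ``two adjacent neighbours'' alternative in (i) is excluded by triangle-freeness, and the surviving outcomes describe exactly the skeleton of a $K_4$-subdivision. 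Alternatively one may pass to a vertex-minimal counterexample $G$, in which case any proper induced subgraph carrying a $K_4$ minor would be a smaller counterexample; this forces $V(J)=V(G)$ and makes the chord bookkeeping finite. Either way one reaches a contradiction, so $G$ has no $K_4$ minor and $\tw(G)\le 2$. (As noted after the statement, the conclusion also follows from Lemma~1.1 of \cite{isk4}.)
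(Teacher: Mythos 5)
Your overall strategy (no $K_4$ minor, hence $\tw(G)\le 2$) is sound, and your endgame is correct: if an \emph{induced} subdivision $J$ of $K_4$ exists, then triangle-freeness forces the long edges to either contain a perfect matching of $K_4$ (giving a theta) or to form a triangle of $K_4$ (giving a wheel), and your finite check of these two cases is right. But the step you yourself flag as ``the real work'' --- that a vertex-minimal $K_4$-subdivision $J$ may be assumed induced --- is a genuine gap, and it is exactly where the content of the theorem lives. Your sketch handles only chords inside a single path and chords between two paths sharing a branch vertex. It does not address chords between interiors of \emph{opposite} paths (say $u\in P_{12}^*$, $v\in P_{34}^*$), nor chords from a branch vertex to the interior of a non-incident path. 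For an opposite-path chord, the natural rerouting (branch vertices $u,v,x_a,x_b$ with $a\in\{1,2\}$, $b\in\{3,4\}$) only discards the interior of one of $P_{13},P_{14},P_{23},P_{24}$; if all four of those are single edges, minimality gives no contradiction, the four $x_i$ induce a $C_4$ rather than a triangle, and you are left with a configuration that does not ``collapse into case (a) or (b)'' without further argument (and which may carry yet more chords of the other types, so the cases cannot be treated one at a time). The concluding appeal to Lemma~\ref{lemma:three_vtx_attachments} is not a proof either: as written it is a one-sentence gesture at a different argument, and to make that argument work one needs control of $N(B)$ for $B=G\setminus N[v]$, which you have not established. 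Note also that induced-ness of a minimal subdivision is genuinely not automatic outside the class (e.g.\ the $5$-wheel contains a $K_4$-subdivision as a subgraph but no induced one), so the class hypotheses must enter this step in an essential way.

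For comparison, the paper avoids subdivisions entirely: after discarding clique cutsets it shows $G$ has no star cutset centered at any vertex $v$ (a star cutset would yield a hole through two components plus $v$, hence a theta or a wheel), deduces that $B=G\setminus N[v]$ is connected with $N(B)=N(v)$, and then applies Lemma~\ref{lemma:three_vtx_attachments} to any three neighbours of $v$ and a minimal connected attachment in $B$ to conclude $\deg(v)\le 2$, so $G$ is a path or a cycle. That route is shorter and sidesteps the chord bookkeeping entirely; if you want to salvage your approach, the missing induced-subdivision reduction would need a complete case analysis of all chord types, at which point you are doing at least as much work as the paper's degree argument.
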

\begin{proof}
By Lemma \ref{lemma:clique-cutsets-tw}, we may assume $G$ does not have a clique cutset (so in particular, $G$ is connected).

\sta{\label{no-star-cutset} $G$ does not have a star cutset $C$ with $v \in C \subseteq N[v]$ for some $v \in V(G)$.}

Suppose that $v \in V(G)$ and $G$ has a star cutset $C \subseteq N[v]$ with $v \in C$. Since $G$ does not have a clique cutset, there exist $x, y \in C \setminus \{v\}$ and two connected components $D_1, D_2$ of $G \setminus C$ such that $\{x, y\} \subseteq N(D_1) \cap N(D_2)$ (and since $G$ is triangle-free, $x$ and $y$ are non-adjacent). Let $P_1$ be a path from $x$ to $y$ with $P_1^* \subseteq D_1$, let $P_2$ be a path from $x$ to $y$ with $P_2^* \subseteq D_2$, and let $H$ be the hole given by $P_1 \cup P_2$. Since $x$ and $y$ are non-adjacent, $v$ has at least two non-adjacent neighbors in $H$. If $N(v) \cap H = \{x, y\}$, then $G$ contains a theta between $x$ and $y$ through $x \dd v \dd y$, $x \dd P_1 \dd y$, and $x \dd P_2 \dd y$, a contradiction, so $v$ has at least three neighbors in $H$. Since $G$ is triangle-free, the neighbors of $v$ are pairwise non-adjacent. But now $(H, v)$ is a wheel of $G$, a contradiction. This proves \eqref{no-star-cutset}.

\sta{\label{max-degree-2} $\deg(v) \leq 2$ for all $v \in V(G)$.} 

Let $v \in V(G)$ and let $B = G \setminus N[v]$. By \eqref{no-star-cutset}, $B$ is connected, and since $G$ does not have a clique cutset, it follows that $N(B) = N(v)$. Suppose that $x, y, z \in N(v)$, and let $H \subseteq B$ be inclusion-wise minimal such that $H$ contains a neighbor of $x, y,$ and $z$. We apply Lemma \ref{lemma:three_vtx_attachments}. If case (ii) holds, then $H \cup \{v, x, y, z\}$ is a theta, a contradiction. Case (iii) does not hold because $G$ is triangle-free. Therefore, case (i) holds. Then, up to symmetry between $x, y, z$, $H \cup \{x, z\}$ is a path from $x$ to $z$ such that $y$ has two non-adjacent neighbors in $H$. But now $H' = \{v, x, z\} \cup H$ is a hole and $y$ has three pairwise non-adjacent neighbors in $H'$, so $(H', v)$ is a wheel of $G$, a contradiction. This proves \eqref{max-degree-2}. \\ 

By \eqref{max-degree-2}, it follows that $G$ is either a path or a cycle, and thus $\tw(G) \leq 2$.  
\end{proof}

Next, we give a simple characterization of the neighborhood of vertices in diamond-free graphs. 
\begin{lemma}
\label{lemma:clique-nbrs}
Let $G$ be diamond-free and let $v \in V(G)$. Then, $N(v)$ is the union of disjoint, pairwise anticomplete cliques. 
\end{lemma}
\begin{proof}
If $N(v)$ contains $P_3$, then $v \cup N(v)$ contains a diamond, a contradiction. Therefore, $N(v)$ is $P_3$-free, and thus the union of disjoint, pairwise anticomplete cliques. 
\end{proof}

For $X \subseteq V(G)$, let $\Hub(X)$ denote the set of all vertices $x \in X$ for which there exists a wheel $(H, x)$ with $H \subseteq X$. 

\begin{lemma}
\label{lemma:bounding-nbrhood-helper}
Let $G$ be a ($C_4$, theta, prism, even wheel, diamond)-free graph with no clique cutset and let $w:V(G) \to [0, 1]$ be a weight function on $G$ with $w(G) = 1$. Let $\S$ be a smooth collection of separations of $G$, let $\beta_\S$ be the central bag for $\S$, and let $w_\S$ be the inherited weight function on $\beta_\S$. Let $v \in \beta_\S$ and (by Lemma \ref{lemma:clique-nbrs}) let $N_{\beta_\S}(v) \setminus \Hub(\beta_\S) =K_1 \cup \cdots \cup K_t$, where $K_1, \hdots, K_t$ are disjoint, pairwise anticomplete cliques. Assume that $v$ is not a pyramid apex in $\beta_\S$. Let $D$ be a component of $\beta_\S \setminus N[v]$. Then, at most two of $K_1, \hdots, K_t$ have a neighbor in $D$. 
\end{lemma}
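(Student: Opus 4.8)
The plan is to suppose for a contradiction that three of the cliques, say $K_1,K_2,K_3$ after relabelling, each have a neighbour in $D$, and to fix $x_i\in K_i$ with a neighbour in $D$ for $i\in\{1,2,3\}$. Since the $K_i$ are pairwise anticomplete, $x_1,x_2,x_3$ are distinct and pairwise non-adjacent, each is adjacent to $v$, and since $D$ is a component of $\beta_\S\setminus N[v]$ none of them lies in $D$ while $v$ is anticomplete to $D$. I would then take $H\subseteq D$ inclusion-wise minimal subject to being connected and containing a neighbour of each of $x_1,x_2,x_3$ (this exists because $D$ itself has these properties), and apply Lemma~\ref{lemma:three_vtx_attachments} to $x_1,x_2,x_3$ and $H$ — applied inside the induced subgraph on $D\cup\{x_1,x_2,x_3\}$, which is harmless because the structure it returns involves only $H\cup\{x_1,x_2,x_3\}$ and the relevant adjacencies there are the same as in $G$. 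For each of its three outcomes I would reattach $v$ and produce a configuration forbidden in $\beta_\S$ (which, being an induced subgraph of $G$, is also ($C_4$, theta, prism, even wheel, diamond)-free), using throughout that $v$ is complete to $\{x_1,x_2,x_3\}$ and anticomplete to $H$.

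Outcomes (ii) and (iii) are the easy cases. In (ii) there are a vertex $a\in H$ and chordless paths $P_1,P_2,P_3$ from $a$ to $x_1,x_2,x_3$ with pairwise internally disjoint interiors, which are in fact pairwise anticomplete since $x_1,x_2,x_3$ are pairwise non-adjacent; prepending $v$ to each, the paths $v\dd x_i\dd P_i$ form a theta between $v$ and $a$, because each has length at least two (as $a\neq x_i$) and is chordless ($v$'s only neighbour on $P_i$ is $x_i$, the rest lying in $H$), a contradiction. In (iii) the triangle $a_1a_2a_3\subseteq H$ together with the paths $v\dd x_i\dd P_i$ from $v$ to $a_i$ forms a pyramid with apex $v$: these paths are chordless, each of length at least two, pairwise internally disjoint, and their only cross-edges are the triangle edges $a_1a_2,a_1a_3,a_2a_3$; this contradicts the hypothesis that $v$ is not a pyramid apex in $\beta_\S$.

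The main work is in outcome (i). There, for some relabelling, there is a chordless path $P$ from $x_i$ to $x_j$ (it cannot be a hole through $x_ix_j$, as $x_i\not\sim x_j$) with $H=P^*$, and $x_k$ either has two non-adjacent neighbours in $H$, or has exactly two neighbours in $H$ and they are adjacent. Since $v$ is adjacent to both ends of $P$ and anticomplete to $P^*=H$, the set $H'=\{v\}\cup P$ is a hole; if $|H|=1$ this is a $C_4$, so $|H|\geq 2$. If $x_k$ has two non-adjacent neighbours $p,q$ in $H$, then $v,p,q$ are three pairwise non-adjacent neighbours of $x_k$ in $H'$, so $(H',x_k)$ is a wheel with $H'\subseteq\beta_\S$; hence $x_k\in\Hub(\beta_\S)$, contradicting $x_k\in K_k\subseteq N_{\beta_\S}(v)\setminus\Hub(\beta_\S)$. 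Otherwise $x_k$ has exactly two neighbours $p,q$ in $H$ with $p\sim q$; as $P$ is chordless, $p$ and $q$ are consecutive on $P$, so up to swapping them $P=x_i\dd\cdots\dd p\dd q\dd\cdots\dd x_j$. Then $\{x_k,p,q\}$ is a triangle, and I claim the three paths $v\dd x_k$, $\;v\dd x_i\dd\cdots\dd p$, $\;v\dd x_j\dd\cdots\dd q$ exhibit a pyramid with apex $v$: they are pairwise internally disjoint, their only cross-edges are $x_kp$, $x_kq$, $pq$ (using that $v$ is anticomplete to $H$, that $N(x_k)\cap H=\{p,q\}$, and that $P$ is chordless), and the last two have length at least two; once more this contradicts that $v$ is not a pyramid apex in $\beta_\S$. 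Having exhausted the outcomes, the assumption fails, so at most two of $K_1,\ldots,K_t$ have a neighbour in $D$.

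The hard part is this last subcase of outcome (i): unlike the others, simply reattaching $v$ does not directly yield a theta or a wheel, and one must assemble the pyramid explicitly from $P$, $x_k$, and $v$, then verify carefully — via chordlessness of $P$, the exact set $N(x_k)\cap H=\{p,q\}$, and anticompleteness of $v$ to $H$ — that the induced subgraph on $\{v,x_k\}\cup P$ really is a pyramid with apex $v$ and base $\{x_k,p,q\}$ (and that at least two of its paths are long). The only other point requiring care is applying Lemma~\ref{lemma:three_vtx_attachments} in the correct induced subgraph so that the structure it produces, and hence the resulting contradiction, lies inside $\beta_\S$.
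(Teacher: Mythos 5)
Your proposal is correct and follows essentially the same route as the paper's proof: the same minimal connected $H\subseteq D$, the same application of Lemma~\ref{lemma:three_vtx_attachments}, and the same case analysis yielding a theta, a pyramid with apex $v$, a wheel centered at $x_k$, or (in the two-adjacent-neighbours subcase) again a pyramid with apex $v$. Your write-up is merely more explicit than the paper's in verifying the pyramid in the final subcase and in ruling out the hole alternative of outcome (i).
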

\begin{proof}
Suppose that $\{a, b, c\} \subseteq \{1, \hdots, t\}$ such that $K_a, K_b, K_c$ each have a neighbor in $D$. Let $x_1 \in K_a$, $x_2 \in K_b$, and $x_3 \in K_c$ be such that $x_1, x_2, x_3$ have neighbors in $D$. Note that $\{x_1, x_2, x_3\}$ is independent. Let $H \subseteq D$ be inclusion-wise minimal such that $x_1, x_2, x_3$ each have a neighbor in $H$. We apply Lemma \ref{lemma:three_vtx_attachments}. If case (ii) holds, then $\{v, x_1, x_2, x_3\} \cup H$ is a theta, a contradiction. If case (iii) holds, then $\{v, x_2, x_2, x_3\} \cup H$ is a pyramid of $\beta_\S$ with apex $v$, a contradiction. Therefore, up to symmetry between $x_1, x_2, x_3$, it holds that that $H \cup \{x_1, x_3\}$ is a path from $x_1$ to $x_3$. If $x_2$ has two non-adjacent neighbors in $H$, then $x_2$ is a wheel center for the hole given by $H \cup \{v\}$, a contradiction (since $x_2 \in N_{\beta_\S}(v) \setminus \Hub(\beta_\S)$). Therefore, $x_2$ has exactly two adjacent neighbors in $H$. But now $\{v, x_2\} \cup H$ is a pyramid of $\beta_\S$ with apex $v$, a contradiction.
\end{proof}

We next prove a result about balanced separators in central bags with balanced vertices. By $\omega(G)$ we denote the size of the largest clique of $G$.
\begin{lemma}
\label{lemma:balanced_vtx_bs}
Let $G$ be a ($C_4$, theta, pyramid, prism, diamond, even wheel)-free graph with no clique cutset, let $w:V(G) \to [0, 1]$ be a weight function on $G$ with $w(G) = 1$, and let $c \in [\frac{1}{2}, 1)$.
Let $\S$ be a smooth collection of separations of $G$, let $\beta_\S$ be the central bag for $\S$, and let $w_\S$ be the inherited weight function on $\beta_\S$. Suppose that there exists $v \in \beta_\S \setminus v(\S)$ such that $v$ is balanced in $G$, and assume that $|N(v) \cap \Hub(\beta_\S)| <C$. Then $\beta_\S$ has a $(w_\S, c)$-balanced
separator of size at most $6\omega(\beta_\S) + C$.
\end{lemma}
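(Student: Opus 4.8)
\textit{Proof proposal.}

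The plan is to build the desired separator around $v$ and then prune it to bounded size using the structure given by Lemma~\ref{lemma:bounding-nbrhood-helper}. First note that $\beta_\S$, being an induced subgraph of $G$, is itself ($C_4$, theta, pyramid, prism, diamond, even wheel)-free; in particular $\beta_\S$ is pyramid-free, so $v$ is not a pyramid apex in $\beta_\S$ and Lemma~\ref{lemma:bounding-nbrhood-helper} applies. Write $N_{\beta_\S}(v)\setminus\Hub(\beta_\S)=K_1\cup\cdots\cup K_t$ as in that lemma, where the $K_i$ are pairwise anticomplete cliques, each of size at most $\omega(\beta_\S)$. Set $Z:=\{v\}\cup(N(v)\cap\Hub(\beta_\S))$, so $|Z|\le C$, and let $D_1,\ldots,D_m$ be the components of $\beta_\S\setminus N[v]$. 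By Lemma~\ref{lemma:bounding-nbrhood-helper}, each $D_i$ has a neighbor in at most two of $K_1,\ldots,K_t$; the components of $\beta_\S\setminus Z$ are therefore obtained by gluing the $D_i$'s to the $K_j$'s along these (at most two per $D_i$) adjacencies.

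Encode this by an auxiliary multigraph $M$ with vertex set $\{K_1,\ldots,K_t\}$: a component $D_i$ with neighbors in exactly two cliques $K_a,K_b$ contributes the edge $K_aK_b$, a component with a neighbor in exactly one clique contributes a pendant half-edge, and a component with no clique neighbor is already a component of $\beta_\S\setminus Z$ on its own. The key claim is that $M$ is a forest (with pendant half-edges). To see this, suppose $M$ has a cycle, take a shortest one, route a shortest path through each $D_i$ on the cycle, glue consecutive paths along edges inside the clique they share, and add $v$, which is adjacent to the clique representatives used and anticomplete to the interiors; choosing the path endpoints inside each clique carefully (using that $\beta_\S$ is $C_4$- and diamond-free to control chords) this yields a hole $H$ with $v$ adjacent to at least three pairwise nonadjacent vertices of $H$. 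A short parity/case analysis of $|N(v)\cap H|$ now produces either a theta or a pyramid with apex $v$ — both forbidden — or a proper non-universal wheel $(H,v)$ whose sectors are exactly the routed paths, which is excluded via Lemma~\ref{lemma:forcer_lemma} / Lemma~\ref{lemma:no_wheels}. I expect establishing this forest claim to be the main obstacle, especially making the wheel case (odd $|N(v)\cap H|$) go through, since there the relevant cutset separates pieces lying in distinct components of $\beta_\S\setminus N[v]$.

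Next I would control the inherited weight. For each component $D_i$ of $\beta_\S\setminus N[v]$ I claim $w_\S(D_i)\le\tfrac12$. Indeed, for $v_j\in v(\S)\cap D_i$ the set $A^*(f(v_j))$ is anticomplete to $v$: otherwise some component $D'$ of $A^*(f(v_j))$ has $v\in N_G(D')\subseteq C(f(v_j))\subseteq N[v_j]$, forcing $v\sim v_j$ and contradicting $v_j\notin N[v]$. More precisely $N_G(D')\subseteq N[v_j]$ for each such $D'$, so $D_i$ together with all the $A^*(f(v_j))$ with $v_j\in D_i$ is contained in a single component of $G\setminus N[v]$, while the ``stray'' pieces of these $A^*(f(v_j))$ (those with $N_G(D')\subseteq N[v]$) attach only to $Z$ and to the at most two cliques incident to $D_i$ — hence they behave like weight carried by the $M$-edge corresponding to $D_i$ and do not escape it. Since $v$ is balanced in $G$, every component of $G\setminus N[v]$ has $w$-weight at most $\tfrac12$, giving $w_\S(D_i)\le\tfrac12$; likewise each isolated $D_i$ is already a component of $\beta_\S\setminus Z$ of $w_\S$-weight at most $\tfrac12\le c$. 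Finally, because the $A^*(f(v_i))$ partition $G\setminus\beta_\S$ we have $w_\S(\beta_\S)=w(G)=1$, so at most one component of $\beta_\S\setminus Z$ has $w_\S$-weight exceeding $c$.

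If no component of $\beta_\S\setminus Z$ has $w_\S$-weight exceeding $c$, then $Z$ already is a $(w_\S,c)$-balanced separator of size at most $C\le 6\omega(\beta_\S)+C$. Otherwise there is a unique ``heavy'' component $R$, and $R$ corresponds to a tree (with pendant half-edges) inside the forest $M$, whose edges and half-edges carry $w_\S$-weight at most $\tfrac12$ and whose total $w_\S$-weight is at most $1$. A routine balanced-separator argument on this weighted tree — repeatedly deleting a centroid clique and cleaning up the boundedly many subtrees that remain too heavy because of an inherited heavy pendant — yields cliques $K_{j_1},\ldots,K_{j_6}$ (with $K_{j_l}\subseteq R$) such that every component of $\beta_\S\setminus(Z\cup K_{j_1}\cup\cdots\cup K_{j_6})$ has $w_\S$-weight at most $c$. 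Then $X:=Z\cup K_{j_1}\cup\cdots\cup K_{j_6}$ is a $(w_\S,c)$-balanced separator of $\beta_\S$, and since $Z$ and the $K_{j_l}$ are pairwise disjoint, $|X|\le 6\omega(\beta_\S)+C$, as required.
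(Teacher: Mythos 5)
Your overall architecture (an auxiliary incidence structure on the cliques $K_i$ and the components of $\beta_\S \setminus N[v]$, weight control via the balancedness of $v$ in $G$, then a small separator in the auxiliary object lifted back at a cost of at most $2\omega(\beta_\S)$ per auxiliary vertex) matches the paper's. But the step you yourself flag as the main obstacle is a genuine gap, and it cannot be repaired the way you suggest: the multigraph $M$ need not be a forest. The class only excludes \emph{even} wheels; a cycle of odd length $r \geq 3$ in $M$, realized with a single attachment vertex in each clique, yields a hole $H$ with $N(v) \cap H$ consisting of $r$ pairwise nonadjacent vertices, i.e.\ an odd proper wheel centered at $v$, which is not forbidden. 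Nothing in the hypotheses of the lemma prevents $v$ from being a wheel center of $\beta_\S$ (in the application, $v = v_m$ is precisely a hub of $G$ for which Lemma \ref{lemma:no-wheels-in-beta} gives no protection). Lemma \ref{lemma:forcer_lemma} and Lemma \ref{lemma:no_wheels} cannot close this case: they assert that a star cutset around the wheel center separates pieces of the hole, not that the wheel is absent, and Lemma \ref{lemma:no_wheels} needs a separation with the hole contained in $B \cup C$, which is not available here since the hole deliberately straddles several components of $\beta_\S \setminus N[v]$.

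The paper proves exactly the weaker statement that survives: the bipartite incidence graph on $\{k_1,\dots,k_t\}\cup\{d_1,\dots,d_m\}$ is (theta, triangle, wheel)-free --- wheel-freeness from the degree bound of Lemma \ref{lemma:bounding-nbrhood-helper}, and theta-freeness from a case analysis applying Lemma \ref{lemma:three_vtx_attachments} inside the two cliques at the ends of the three paths (a nontrivial argument your proposal does not supply, and which you would still need even for your forest claim, to rule out three components joining the same pair of cliques). Such a graph can be a long cycle, but by Theorem \ref{thm:theta-triangle-wheel-free} it has treewidth at most $2$, hence by Lemma \ref{lemma:tw-to-weighted-separator} a balanced separator of size $3$; each separator vertex costs at most $2\omega(\beta_\S)$ vertices of $\beta_\S$ (a clique, or the at most two cliques attached to a component), giving the $6\omega(\beta_\S)$ term. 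So your centroid-of-a-tree step must be replaced by this size-$3$ separator in a treewidth-$2$ graph; with that replacement and the theta-freeness argument added, the remainder of your plan (the bound $w_\S(D_i) \leq \frac{1}{2}$ and the final count) is essentially the paper's.
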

\begin{proof} Let $D_1, \hdots, D_m$ be the components of $\beta_\S \setminus N[v]$. 

\sta{\label{D-small-wt} $w_\S(D_i) \leq \frac{1}{2}$ for all $1 \leq i \leq m$.}

Let $D'$ be a component of $G \setminus N[v]$, and let $s \in v(\S) \cap D'$. Suppose that $u \in A(f(s)) \cap N(v)$. Then, since $v \in N(A(f(s)))$, it follows that $v \in A(f(s)) \cup C(s)$. But $v \in \beta_{\S}$, so $v \not \in A(f(s))$ because $\S$ is smooth, and $v$ is not adjacent to $s$, a contradiction. This proves that $A(f(s)) \cap N(v) = \emptyset$. Now, let $D'' \neq D'$ be a component of $G \setminus N[v]$, and suppose that $D'' \cap A(f(s)) \neq \emptyset$. Then, since $N(s) \cap D'' = \emptyset$, it follows that $D'' \subseteq A(f(s))$ and $N(D'') \subseteq N(v) \cap C(f(s))$.  Since $G$ has no clique cutset, it follows that there exist $x, y \in N(v) \cap C(f(s))$ with $x$ and $y$ non-adjacent. But now $\{s, v, x, y\}$ is a $C_4$, a contradiction. This proves that $D'' \subseteq B(f(s))$.

Let $D$ be a component of $\beta_\S \setminus N[v]$ and let $D'$ be the component of $G \setminus N[v]$ containing $D$. Now, for every $s \in v(\S) \cap D$, it holds that $A(f(s)) \subseteq D'$. 
It follows that $w_\S(D) \leq w(D')$. Since $v$ is balanced in $G$, it holds that  $w(D') \leq \frac{1}{2}$, and thus \eqref{D-small-wt} follows. \\

By Lemma \ref{lemma:clique-nbrs}, let $N_\beta(v) \setminus \Hub(\beta_\S) = K_1 \cup \cdots \cup K_t$, where $K_i$ is a clique for $1 \leq i \leq t$ and $K_1, \hdots, K_t$ are disjoint and pairwise anticomplete to each other. Let $H$ be a graph with vertex set $\{k_1, \hdots, k_t, d_1, \hdots, d_m\}$, where $H$ contains an edge between $k_i$ and $d_j$ if and only if $K_i$ has a neighbor in $D_j$.



\sta{\label{H-theta-wheel-bipartite} $H$ is (theta, wheel)-free and bipartite.} 

By definition of $H$, $(\{k_1, \hdots, k_t\}, \{d_1, \hdots, d_m\})$ is a bipartition of $H$, so $H$ is bipartite. By Lemma \ref{lemma:bounding-nbrhood-helper}, $\deg(d_i) \leq 2$ for $1 \leq i \leq m$, so $H$ is wheel-free. Suppose that $H$ contains a theta between $x$ and $y$ (with $x, y \in V(H)$). Since $x$ and $y$ have degree at least three, it follows that $x, y \in \{k_1, \hdots, k_t\}$. Let $P_1, P_2, P_3$ be three disjoint, pairwise anticomplete paths from $x$ to $y$ in $H$. Let $Q_1, Q_2, Q_3$ be paths in $\beta_\S$, where $Q_i$ has ends $x_i$ and $y_i$ and is formed from $P_i$ by replacing every path $k_i \dd d_j \dd k_\ell \subseteq P_i$ with a path from $K_i$ to $K_\ell$ through $D_j$ in $\beta_\S$. Note that since $x, y \in \{k_1, \hdots, k_t\}$, it follows that $\{x_1, x_2, x_3\} \subseteq K_i$ and $\{y_1, y_2, y_3\} \subseteq K_j$ for some $1 \leq i, j \leq t$.  

Let $x_i'$ be the neighbor of $x_i$ in $Q_i$ for $i = 1, 2, 3$. Let $H_1$ be an inclusion-wise minimal connected subset of $K_i$ such that $H_1$ contains neighbors of $x_1', x_2', x_3'$. We apply Lemma \ref{lemma:three_vtx_attachments} to $x_1', x_2', x_3'$ and $H_1$. (Note that while $x_1, x_2, x_3$ are not necessarily distinct, $x_1', x_2', x_3'$ are distinct). Similarly, let $y_i'$ be the neighbor of $y_i$ in $Q_i$ for $i = 1, 2, 3$. Let $H_2$ be an inclusion-wise minimal connected subset of $K_j$ such that $H_2$ contains neighbors of $y_1', y_2', y_3'$. We apply Lemma \ref{lemma:three_vtx_attachments} to $y_1', y_2', y_3'$, and $H_2$. Note that $\{x_1', x_2', x_3'\} \subseteq \bigcup_{1 \leq i \leq m} D_i$ and that $x_1', x_2', x_3'$ are each in distinct components $D_i$. Similarly, $\{y_1', y_2', y_3'\} \subseteq \bigcup_{1 \leq i \leq m} D_i$ and $y_1', y_2', y_3'$ are each in distinct components $D_i$. Therefore, $\{x_1', x_2', x_3'\}$ are pairwise non-adjacent and $\{y_1', y_2', y_3'\}$ are pairwise non-adjacent.  

Suppose that $\{x_1', x_2', x_3'\} \cup H_1$ satisfies condition (i) of Lemma \ref{lemma:three_vtx_attachments}. If $\{y_1', y_2', y_3'\} \cup H_2$ satisfies condition (i), then $H_1 \cup H_2 \cup \{x_1', x_2', x_3', y_1', y_2', y_3'\}$ is a prism or a line wheel, a contradiction. If $\{y_1', y_2', y_3'\} \cup H_2$ satisfies condition (ii), then $H_1 \cup H_2 \cup \{x_1', x_2', x_3', y_1', y_2', y_3'\}$ is a pyramid, a contradiction. If $\{y_1', y_2', y_3'\} \cup H_2$ satisfies condition (iii), then $H_1 \cup H_2 \cup \{x_1', x_2', x_3', y_1', y_2', y_3'\}$ is a prism or a line wheel, a contradiction. Therefore, up to symmetry, neither $\{x_1', x_2', x_3'\} \cup H_1$ nor $\{y_1', y_2', y_3'\} \cup H_2$ satisfy condition (i). 

Suppose that $\{x_1', x_2', x_3'\} \cup H_1$ satisfies condition (ii) of Lemma \ref{lemma:three_vtx_attachments}. If $\{y_1', y_2', y_3'\} \cup H_2$ satisfies condition (ii), then $H_1 \cup H_2 \cup \{x_1', x_2', x_3', y_1', y_2', y_3'\}$ is a theta in $\beta$, a contradiction. If $\{y_1', y_2', y_3'\} \cup H_2$ satisfies condition (iii), then $H_1 \cup H_2 \cup \{x_1', x_2', x_3', y_1', y_2', y_3'\}$ is a pyramid, a contradiction. Therefore, up to symmetry, neither $\{x_1', x_2', x_3'\} \cup H_1$ nor $\{y_1', y_2', y_3'\} \cup H_2$ satisfy condition (ii). 

It follows that $\{x_1', x_2', x_3'\} \cup H_1$ and $\{y_1', y_2', y_3'\} \cup H_2$ satisfy condition (iii). But now $H_1 \cup H_2 \cup \{x_1', x_2', x_3', y_1', y_2', y_3'\}$ is a prism or a line wheel, a contradiction. This proves \eqref{H-theta-wheel-bipartite}. \\

Let $w_H: V(H) \to [0, 1]$ be a weight function on $H$ defined as follows: $w_H(k_i) = w_\S(K_i)$ for $1 \leq i \leq t$, and $w_H(d_i) = w_\S(D_i)$ for $1 \leq i \leq m$. Let $\overline{w_H}$ be a weight function on $H$ such that $\overline{w_H}(v) = \frac{w_H(v)}{w_H(H)}$ for every $v \in H$. Note that $w_H(H) \leq 1$, so for all $X \subseteq V(H)$, it holds that $\overline{w_H}(X) \geq w_H(X)$. 

\sta{\label{betaS-has-bs} $\beta_\S$ has a $(w_\S, c)$-balanced separator of size at most $6\omega(\beta_\S) + C$.}

Since $H$ is (theta, triangle, wheel)-free, it follows from Theorem \ref{thm:theta-triangle-wheel-free} and Lemma \ref{lemma:tw-to-weighted-separator} that $H$ has a $(\overline{w_H}, \frac{1}{2})$-balanced separator $X$ of size at most $3$. Let $Y = \{v\} \cup (N(v) \cap \Hub(\beta_\S)) \cup \{K_i \ \text{s.t.} \ X \cap N[k_i] \neq \emptyset\}.$ By assumption of the lemma, $N(v) \cap \Hub(\beta_\S) < C$, and by Lemma \ref{lemma:bounding-nbrhood-helper}, it holds that $| X \cup \{K_i \ \text{s.t.} \ X \cap N[k_i] \neq \emptyset\}| \leq 2\omega(\beta_S)|X| \leq  6\omega(\beta_\S)$. Therefore, $|Y| \leq 6\omega(\beta_\S) + C$. 

We claim that $Y$ is a $(w_\S, c)$-balanced separator of $\beta_\S$. Let $F$ be a component of $\beta_\S \setminus Y$. Note that by construction of $Y$, it holds that $F \subseteq \bigcup_{1 \leq i \leq t} K_t \cup \bigcup_{1 \leq i \leq m} D_i$, that if $K_i \cap F \neq \emptyset$, then $K_i \subseteq F$, and if $D_i \cap F \neq \emptyset$, then $D_i \subseteq F$. Let $K_F = \{i \text{ s.t. } K_i \subseteq F\}$ and $D_F = \{i \text{ s.t. } D_i \subseteq F\}$. If $F = D_i$ for some $1 \leq i \leq m$, then it follows from \eqref{D-small-wt} that $w_\S(F) \leq \frac{1}{2}$, so we may assume that $K_F \neq \emptyset$. Let $i \in D_F$ and suppose that $d_i \in X$. Then, $N(D_i) \subseteq Y$, so $K_F = \emptyset$, a contradiction. Therefore, $d_i \not \in X$. Similarly, for $i \in K_F$, it holds that $k_i \not \in X$, since $K_i \not \subseteq Y$. Let $Q = \{k_i \text{ s.t. } i \in K_F\} \cup \{d_i \text{ s.t. } i \in D_F\}$. Then, $Q$ is contained in a connected component of $H \setminus X$, so $\overline{w_H}(Q) \leq \frac{1}{2}$. Finally, 
$$w_\S(F) = \sum_{i \in K_F} w_\S(K_i) + \sum_{i \in D_F} w_\S(D_i) = w_H(Q) \leq \overline{w_H}(Q) \leq \frac{1}{2}.$$
This proves \eqref{betaS-has-bs}. \\

Now, Lemma \ref{lemma:balanced_vtx_bs} follows from \eqref{betaS-has-bs}. 
\end{proof}

Recall that $\mathcal{C}_t^*$ is the class of ($C_4$, diamond, theta, prism, even wheel, $K_t$)-free graphs with no clique cutset. Let $G \in \mathcal{C}_t^*$, let $w:V(G) \to [0, 1]$ be a weight function on $G$ with $w(G) = 1$, and let $U$ be the set of unbalanced vertices of $G$. Let $X \subseteq U$. The {\em $X$-revised collection of separations}, denoted $\tilde{\S}_X$, is defined as follows. Let $u \in X$, and let $\tilde{S}_u = (\tilde{A}_u, \tilde{C}_u, \tilde{B}_u)$ be such that $\tilde{B}_u$ is the largest weight connected component of $G \setminus N[u]$, $\tilde{C}_u = (N(u) \cap N(\tilde{B}_u)) \cup \bigcup_{v \in N(u) \cap X} (N(u) \cap N(v))$, and $\tilde{A}_u = V(G) \setminus (\tilde{C}_u \cup \tilde{B}_u)$. Then, $\tilde{\S}_X = \{\tilde{S}_u : u \in X\}$. Note that the separations in $\tilde{\S}_X$ are closely related to canonical star separations. Specifically, for all $u \in X$, the following hold: 
\begin{enumerate}[(i)]
    \item $\tilde{B}_u = B_u$, 
    \item $C_u \subseteq \tilde{C}_u \subseteq N[u]$, 
    \item $\tilde{A}_u \subseteq A_u$, 
    \item $A_u \setminus N(u) \subseteq \tilde{A}_u$.
\end{enumerate}

 \begin{lemma}
 Let $G \in \mathcal{C}_t^*$, let $w: V(G) \to [0, 1]$ be a weight function on $G$ with $w(G) = 1$, let $U$ be the set of unbalanced vertices of $G$ such that every vertex of $U$ is minimal under the relation $\leq_A$. Let $\tilde{\S} = \tilde{\S}_U$ be the $U$-revised collection of separations. Then, $\tilde{S}_u$ and $\tilde{S}_v$ are nearly non-crossing for all $\tilde{S}_u, \tilde{S}_v \in \tilde{\S}$. 
 \label{lemma:new-sepns-noncrossing}
 \end{lemma}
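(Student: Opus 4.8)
The plan is to rephrase ``nearly non-crossing'' as the non-existence of a certain ``crossing path'', and then split according to whether the two centres are adjacent. Fix distinct $\tilde S_u,\tilde S_v\in\tilde{\S}$. First I would record two things. (1) $(\tilde A_u,\tilde C_u,\tilde B_u)$ really is a star separation: $\tilde C_u\subseteq N[u]$ by construction, and since $\tilde A_u\subseteq A_u$, $A_u$ is anticomplete to $B_u=\tilde B_u$, and $A_u\setminus\tilde A_u\subseteq\tilde C_u$, every vertex of $N(\tilde A_u)\setminus\tilde A_u$ lies in $(A_u\setminus\tilde A_u)\cup(N(A_u)\setminus A_u)\subseteq\tilde C_u$ (using $C_u\subseteq\tilde C_u$), so $\tilde A_u$ is anticomplete to $\tilde B_u$. (2) Since $u,v$ are distinct minimal elements of $\leq_A$ (Lemma \ref{lemma:leqA-partial-order}), $v\notin A_u\supseteq\tilde A_u$, $u\notin A_v\supseteq\tilde A_v$, and also $u\notin\tilde A_u$, $v\notin\tilde A_v$; moreover if $uv\notin E(G)$ then $v\in B_u$ and $u\in B_v$ (as $v\notin N[u]$, $v\notin A_u$), while if $uv\in E(G)$ then $v\in C_u\setminus\{u\}$ and $u\in C_v\setminus\{v\}$, so $v$ has a neighbour in $B_u$ and $u$ has a neighbour in $B_v$.

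Now suppose for a contradiction that some component $D$ of $\tilde A_u\cup\tilde A_v$ is a component of neither $\tilde A_u$ nor $\tilde A_v$. Then $D$ meets both $\tilde A_u\setminus\tilde A_v$ and $\tilde A_v\setminus\tilde A_u$, so there is a shortest (hence induced) path $P=p_0\dd\cdots\dd p_k$ in $D$ with $p_0\in\tilde A_u\setminus\tilde A_v$, $p_k\in\tilde A_v\setminus\tilde A_u$, and $p_1\ll p_{k-1}\in\tilde A_u\cap\tilde A_v$. By (1), from $p_0\ll p_{k-1}\in\tilde A_u$ and $p_k\notin\tilde A_u$ we get $p_k\in N(\tilde A_u)\setminus\tilde A_u\subseteq\tilde C_u$, and symmetrically $p_0\in\tilde C_v$; since $p_0\neq v$ and $p_k\neq u$ this gives $p_0\in N(v)$ and $p_k\in N(u)$. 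If $uv\notin E(G)$ we are done: then $v\in B_u$, so $N(v)\subseteq B_u\cup N(B_u)$, which is disjoint from $A_u\supseteq\tilde A_u\ni p_0$ — a contradiction.

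So assume $uv\in E(G)$. Since $v\in N(u)\cap U$ and $\tilde C_u\supseteq N(u)\cap N(v)$, the membership $p_0\in\tilde A_u$ forces $p_0\notin N(u)$, so $p_0\notin N[u]$; symmetrically $p_k\notin N[v]$. Hence $u,v,p_0\ll p_k$ are pairwise distinct and $W=v\dd p_0\dd\cdots\dd p_k\dd u\dd v$ is a cycle whose only possible chords join $u$ or $v$ to some $p_i$ with $1\le i\le k-1$. Shortcutting along such chords (a shorter cycle obtained this way is a $C_4$ or, with a further chord, a diamond) we may assume $W$ is a hole $H$. Now $v$ has a neighbour $x\in B_u$ with $x\notin V(H)$; as $A_u\supseteq\{p_0\ll p_{k-1}\}$ is anticomplete to $B_u$ and $u\notin N(B_u)$, the neighbours of $x$ in $H$ lie in $\{v,p_k\}$. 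If $p_k$ has a neighbour in $B_u$, then taking an induced $v$--$p_k$ path $R$ with $R^*\subseteq B_u$, the three paths $v\dd u\dd p_k$, $R$ and $v\dd p_0\dd\cdots\dd p_k$ have pairwise disjoint, pairwise anticomplete interiors and so form a theta between $v$ and $p_k$, a contradiction; symmetrically if $p_0$ has a neighbour in $B_v$. Otherwise $p_k\in\tilde C_u\setminus C_u$ and $p_0\in\tilde C_v\setminus C_v$, so there are minimal unbalanced vertices $v',v''\notin\{u,v\}$ with $p_k\in N(u)\cap N(v')$ and $p_0\in N(v)\cap N(v'')$; by minimality $v'\in C_u\setminus\{u\}$ (so $v'$ has a neighbour in $B_u$) and $v''\in C_v\setminus\{v\}$, and one rebuilds the missing long $v$--$p_k$ (resp. $u$--$p_0$) path through $B_u$ via $v'$ (resp. through $B_v$ via $v''$), again producing a theta — unless $v'$ (or $v''$) has a neighbour among $p_1\ll p_{k-1}$ or is adjacent to $v$ or $u$, in which case one extracts a $C_4$, a diamond, a prism, or an even wheel. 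Every possibility contradicts $G\in\mathcal{C}_t^*$, so no such $D$ exists and $\tilde S_u,\tilde S_v$ are nearly non-crossing.

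The main obstacle is precisely this last case of the $uv\in E(G)$ analysis — a crossing path whose endpoints reach the big components $B_u,B_v$ only through auxiliary minimal unbalanced vertices, which is exactly the configuration that enlarging $C_u$ to $\tilde C_u$ is designed to control; disposing of it cleanly requires a careful secondary case analysis of how those auxiliary vertices meet $P$, the sets $B_u,B_v$ and each other, leaning on diamond-, theta-, prism- and even-wheel-freeness together with the absence of clique cutsets.
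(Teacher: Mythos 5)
Your reduction to a crossing induced path $P=p_0\dd\cdots\dd p_k$ with $p_0\in\tilde C_v\cap\tilde A_u$, $p_k\in\tilde C_u\cap\tilde A_v$ and $P^*\subseteq\tilde A_u\cap\tilde A_v$ is correct, and your treatment of the non-adjacent case is essentially the paper's. But the adjacent case has two genuine gaps. First, the step ``we may assume $W$ is a hole'' does not go through: nothing prevents $u$ (or $v$) from having neighbours in $P^*\subseteq\tilde A_u\cap\tilde A_v$ (a neighbour of $u$ lies in $\tilde A_u$ exactly when it is outside $N(B_u)$ and outside $N(x)$ for every $x\in N(u)\cap U$, which is entirely possible); such a chord does not yield a $C_4$ or a diamond, and shortcutting destroys exactly what your theta needs, namely that the $u$-end of the path lies in $\tilde C_u$ and hence reaches $B_u$. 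Worse, the configuration a chord creates --- for instance $u$ adjacent to $p_{k-1}$ together with your $v$--$p_k$ path through $B_u$ --- can be a pyramid with apex $v$, and pyramids are \emph{not} excluded from $\mathcal{C}_t^*$. Second, your final case ($p_k\in\tilde C_u\setminus C_u$ and $p_0\in\tilde C_v\setminus C_v$) is not proved at all: ``one rebuilds the missing path \ldots\ unless \ldots\ in which case one extracts'' is a list of hopes rather than an argument, as your closing paragraph concedes.

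The paper avoids both problems. It first shows by a short diamond argument that your hard case is vacuous: if $p_k\in\tilde C_u\setminus C_u$, then $p_k\in N(u)\cap N(x)$ for some $x\in U\cap N(u)$ with $x\neq v$ (since $p_k\notin N(v)$); as $x$ is $\leq_A$-minimal and has the neighbour $p_k\in A_v$, it lies in neither $A_v$ nor $B_v$, so $x\in C_v$ is adjacent to $v$, and then $\{u,v,x,p_k\}$ is a diamond. Hence $p_k\in C_u$ and $p_0\in C_v$ always hold. It then closes a hole through $B_u\cup B_v$ (using $w(B_u),w(B_v)\geq\tfrac12$, so $B_u\cap B_v\neq\emptyset$) \emph{without} placing $u$ and $v$ on it, and applies Lemma~\ref{lemma:common_nbrs} to the adjacent pair $u,v$: each has two non-adjacent neighbours on the hole, yet $N(u)\cap N(v)\subseteq\tilde C_u\cap\tilde C_v$ is disjoint from the hole, a contradiction. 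That argument is insensitive to extra edges from $u$ or $v$ into $P^*$, which is precisely where your theta construction breaks; you should adopt both moves --- pin $p_k\in C_u$ and $p_0\in C_v$ first, and keep $u,v$ off the hole.
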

 \begin{proof}
 Since $\tilde{A}_x \subseteq A_x$ for all $x \in U$, it follows that either $u$ and $v$ are adjacent, $u \in \tilde{C}_v$, and $v \in \tilde{C}_u$, or $u$ and $v$ are non-adjacent, $u \in \tilde{B}_v$, and $v \in \tilde{B}_u$. Suppose first that $u$ and $v$ are non-adjacent. Then,  $u \in \tilde{B}_v$, and $v \in \tilde{B}_u$. Since $u$ is complete to $\tilde{C}_u \setminus \{u\}$, it follows that $\tilde{C}_u \subseteq \tilde{B}_v \cup \tilde{C}_v$, so $\tilde{A}_v \cap \tilde{C}_u = \emptyset$. By symmetry, $\tilde{A}_u \cap \tilde{C}_v = \emptyset$. It follows that every component of $\tilde{A}_u \cup \tilde{A}_v$ is a component of $\tilde{A}_u$ or a component of $\tilde{A}_v$, so $\tilde{S}_u$ and $\tilde{S}_v$ are nearly non-crossing. 

 Now suppose that $u$ and $v$ are adjacent. Then, $u \in \tilde{C}_v$, and $v \in \tilde{C}_u$. We may assume that there is a component of $\tilde{A}_u \cup \tilde{A}_v$ that is not a component of $\tilde{A}_u$ or a component of $\tilde{A}_v$. Therefore, there exists $u' \in \tilde{C}_u \cap \tilde{A}_v$, $v' \in \tilde{C}_v \cap \tilde{A}_u$, and a path $P$ from $u'$ to $v'$ with $P^* \subseteq \tilde{A}_u \cap \tilde{A}_v$. We claim that $u' \in C_u$ and $v' \in C_v$. Suppose that $u' \in \tilde{C}_u \setminus C_u$. Then, there exists $x \in X \cap N(u)$ such that $u'$ is complete to $\{u, x\}$. If $v$ is adjacent to $u'$, then $u' \in \tilde{C}_v$ (since $u'$ is a common neighbor of $u$ and $v$), a contradiction, so $v$ is not adjacent to $u'$ and consequently $v \neq x$. Then, since $x \in X$ and $x$ has a neighbor in $\tilde{A}_v \subseteq A_v$, it follows that $x \in C_v$, so $x$ is adjacent to $v$. But now $uu'xv$ is a diamond, a contradiction. This proves that $u' \in C_u$ and similarly $v' \in C_v$.

 Since $u, v \in U$ and $\tilde{B}_u = B_u$ and $\tilde{B}_v = B_v$, it follows that $w(\tilde{B}_u) \geq \frac{1}{2}$ and $w(\tilde{B}_v) \geq \frac{1}{2}$, and so $\tilde{B}_u \cap \tilde{B}_v \neq \emptyset$. Let $b \in \tilde{B}_u \cap \tilde{B}_v$. Since $u' \in C_u$, there exists a path $Q_1$ from $u'$ to $b$ with $Q_1 \setminus \{u'\} \subseteq \tilde{B}_u$. Similarly, there exists a path $Q_2$ from $v'$ to $b$ with $Q_2 \setminus \{v'\} \subseteq \tilde{B}_v$.  
 Now, there exists a path $Q$ from $u'$ to $v'$ with $Q \subseteq Q_1 \cup Q_2$, so in particular, $Q^* \subseteq \tilde{B}_u \cup \tilde{B}_v$. Note that $Q^* \cap (C_v \cap B_u) \neq \emptyset$ and $Q^* \cap (C_u \cap B_v) \neq \emptyset$, so in particular, $Q^*$ contains a neighbor of $u$ and a neighbor of $v$.  

 Let $H$ be the hole given by $P \cup Q$. Now, $u$ and $v$ are adjacent and each have two non-adjacent neighbors in $H$, so by Lemma \ref{lemma:common_nbrs}, $u$ and $v$ have a common neighbor in $H$. However, since $u \in \tilde{C}_v$ and $v \in \tilde{C}_u$, it follows that $N(u) \cap N(v) \subseteq \tilde{C}_u \cap \tilde{C}_v$. Since $H \cap (\tilde{C}_u \cap \tilde{C}_v) = \emptyset$, we get a contradiction. 
 \end{proof}

Next, we show how to construct a useful collection of separations of $G$. We need the following lemma: 

\begin{lemma}[\cite{logpaper}]
\label{lemma:degeneracy}
Let $G$ be (theta, $K_t$)-free with $|V(G)| = n$. Then, there exists a constant $\delta_t$ and a partition $T_1, \hdots, T_m$ of $V(G)$ into independent sets, such that $m \leq \log(n)$ and for every $i \in \{1,\hdots, m\}$ and $v \in T_i$, it holds that $|N(v) \setminus (\bigcup_{j < i} T_j)| \leq 4\delta_t$. 
\end{lemma}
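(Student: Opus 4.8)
The plan is to reduce the statement to the fact that \emph{(theta, $K_t$)-free graphs have bounded degeneracy}, and then to produce the layered partition by repeatedly peeling off an independent set of low-degree vertices. The single substantive ingredient is a constant $\delta_t$, depending only on $t$, such that every (theta, $K_t$)-free graph on $N$ vertices has at most $\delta_t N$ edges; equivalently, every induced subgraph of such a graph contains a vertex of degree at most $\delta_t$. This is the degeneracy bound of \cite{logpaper}; one natural way to establish it is by contradiction, assuming $G$ is (theta, $K_t$)-free with enormous minimum degree $d$: fixing a vertex $v$, the neighbourhood $N(v)$ is $K_{t-1}$-free and has at least $d$ vertices, so by Ramsey's theorem it contains a large independent set $I$; since $v$ is complete to $I$ and every vertex of $I$ still has degree at least $d$, one can route three internally disjoint induced paths between two vertices of $I$ --- one through $v$, two through $G \setminus v$, pruned so their interiors are pairwise anticomplete --- yielding a theta, a contradiction.

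Granting the degeneracy bound, set $G_1 = G$ and $n_1 = n$. Having chosen $T_1, \ldots, T_{i-1}$, let $G_i = G \setminus (T_1 \cup \cdots \cup T_{i-1})$ and $n_i = |V(G_i)|$. Since $G_i$ is again (theta, $K_t$)-free it has at most $\delta_t n_i$ edges, so the set $L_i$ of vertices of degree at most $4\delta_t$ in $G_i$ satisfies $|L_i| \ge n_i/2$. Let $T_i$ be a maximal (under inclusion) independent subset of $L_i$, and continue until $G_{i+1}$ is empty, say after $m$ steps. Then $T_1, \ldots, T_m$ is a partition of $V(G)$ into independent sets, and for $v \in T_i$ we have $N_G(v) \setminus (T_1 \cup \cdots \cup T_{i-1}) = N_{G_i}(v)$, which has at most $4\delta_t$ elements because $v \in L_i$; this is the required degree condition.

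To bound $m$, note that by maximality $T_i$ dominates $L_i$ in $G_i$, so $L_i \subseteq \bigcup_{v \in T_i} N_{G_i}[v]$; since each of these closed neighbourhoods has at most $4\delta_t + 1$ vertices, $n_i/2 \le |L_i| \le (4\delta_t+1)|T_i|$, hence $|T_i| \ge n_i/(2(4\delta_t+1))$ and $n_{i+1} = n_i - |T_i| \le (1 - \frac{1}{2(4\delta_t+1)})\,n_i$. Thus the $n_i$ decrease geometrically at a rate depending only on $t$, and $m = O_t(\log n)$, which matches the claimed bound $m \le \log n$ up to the $t$-dependent constant (and small $n$ are trivial).

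The work is entirely in the degeneracy bound of the first paragraph; everything afterwards is bookkeeping. Within that step, the hard part is not the Ramsey extraction of the independent set $I \subseteq N(v)$ but the construction of the three mutually anticomplete induced paths forming the theta: ensuring that the three routes can be chosen internally disjoint, induced, and with pairwise anticomplete interiors requires an additional Ramsey-type cleanup inside the large neighbourhoods of the vertices of $I$, and this is where the bulk of the argument of \cite{logpaper} lies.
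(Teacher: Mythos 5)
This lemma is imported verbatim from \cite{logpaper}; the paper gives no proof of it, so there is no internal argument to compare yours against, and I judge the proposal on its own. Your second and third paragraphs --- peeling off a maximal independent subset $T_i$ of the set $L_i$ of vertices of degree at most $4\delta_t$ in $G_i$ --- are correct granted the edge bound, and they do deliver the degree condition, since for $v \in T_i$ one has $N(v)\setminus(T_1\cup\cdots\cup T_{i-1}) = N_{G_i}(v)$ and $|N_{G_i}(v)|\le 4\delta_t$. One caveat: your recursion gives $n_{i+1}\le\bigl(1-\tfrac{1}{2(4\delta_t+1)}\bigr)n_i$ and hence $m = O(\delta_t\log n)$, not $m\le\log n$, so strictly the stated bound is not obtained (a cleaner route is to take $T_i=L_i$, which halves the vertex count each round and gives $m\le\log_2 n$ rounds, and then split each $L_i$ into $4\delta_t+1$ independent sets using that $G_i[L_i]$ has maximum degree at most $4\delta_t$). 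This discrepancy happens to be harmless here, because the present paper never uses the bound on the number of parts --- only the ordering by parts and the $4\delta_t$ degree condition enter the later arguments.

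The more serious issue is that the entire content of the lemma is the degeneracy (linear edge count) bound of your first paragraph, and what you offer for it is not a proof. Large minimum degree together with a large independent set $I\subseteq N(v)$ does not straightforwardly produce a theta: the difficulty is not finding three internally disjoint paths between two nonadjacent vertices of $I$, but arranging that their interiors are pairwise anticomplete, and ``pruning'' does not accomplish this --- shortcutting a walk to an induced path controls edges within one path, not edges between the interiors of two different paths. This is precisely where connectifier-type tools such as Lemma \ref{lemma:three_vtx_attachments} and the bulk of the case analysis of \cite{logpaper} are required, as you yourself concede in your closing paragraph. As written, then, the proposal either defers its essential step to the very reference from which the lemma is quoted (legitimate, but then it amounts to the same black-box citation the paper makes, plus bookkeeping) or it leaves a genuine gap at the one place where work is needed.
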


We call $\delta_t$ the {\em hub constant for $t$}. Let $G \in \mathcal{C}_t$. We apply Lemma \ref{lemma:degeneracy} to $\Hub(G)$. Let $T_1, \hdots, T_\ell$ be the partition of $\Hub(G)$ given by Lemma \ref{lemma:degeneracy}. For $v \in \Hub(G)$, let $t(v)$ denote the index of the part of the partition $T_1, \hdots, T_\ell$ containing $v$. Let $v_1, \hdots, v_k$ be an ordering of $\Hub(G)$ such that for all $1 \leq i < j \leq k$, it holds that $t(v_i) \leq t(v_j)$. Let $U$ be the set of unbalanced vertices of $G$. Let $m$ be defined as follows. If $\Hub(G) \subseteq U$, then $m=k+1$. Otherwise, let $m$ ne such that $v_m$ is the minimum element of $\Hub(G) \setminus U$. Now, $\{v_1, \hdots, v_{m-1}\} \subseteq U$. Let $M$ be the set of minimal vertices of $\{v_1, \hdots, v_{m-1}\}$ under the relation $\leq_A$, and let $\tilde{S}_M$ be the $M$-revised collection of separations. We call $(\{v_1, \hdots, v_k\}, m, M, \tilde{S}_M)$ the {\em hub division} of $G$. The next two lemmas describe properties of the hub division. 

\begin{lemma}
\label{lemma:S_M-smooth}
Let $G \in \mathcal{C}_t^*$, let $w:V(G) \to [0, 1]$ be a weight function on $G$ with $w(G) = 1$, and let $(\{v_1, \hdots, v_k\}, m, M, \tilde{S}_M)$ be the hub division of $G$. Then, $\tilde{S}_M$ is a smooth collection of separations of $G$. 
\end{lemma}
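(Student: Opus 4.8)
The plan is to verify the three conditions in the definition of a smooth collection directly for $\tilde{\S}_M$, leaning on two structural facts about $M$ that I would establish first: every element of $M$ is an unbalanced vertex of $G$ (by the construction of the hub division we have $\{v_1,\dots,v_{m-1}\}\subseteq U$), and $M$ is an antichain of $(U,\leq_A)$ — indeed $M$ is the set of $\leq_A$-minimal elements of $\{v_1,\dots,v_{m-1}\}$, and $\leq_A$ is a partial order by Lemma~\ref{lemma:leqA-partial-order}, so distinct $u,v\in M$ satisfy $u\notin A_v$ and $v\notin A_u$.

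Condition (i), that the separations in $\tilde{\S}_M$ are pairwise nearly non-crossing, is exactly the content of Lemma~\ref{lemma:new-sepns-noncrossing} applied with index set $M$. The point I would make is that the proof of that lemma uses only that the two centers under consideration are unbalanced and $\leq_A$-incomparable, together with the relations $\tilde{B}_u=B_u$, $C_u\subseteq\tilde{C}_u\subseteq N[u]$, and $\tilde{A}_u\subseteq A_u$ recorded for any revised collection, plus $C_4$-freeness, diamond-freeness, and the absence of clique cutsets in $G\in\mathcal{C}_t^*$ — all of which hold for every pair of elements of $M$. Hence $\tilde{S}_u$ and $\tilde{S}_v$ are nearly non-crossing for all $\tilde{S}_u,\tilde{S}_v\in\tilde{\S}_M$.

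Condition (ii): I would take $v(\tilde{\S}_M)=M$ and $f(u)=\tilde{S}_u$. The containment $u\in C(\tilde{S}_u)\subseteq N[u]$ is immediate from $u\in C_u\subseteq\tilde{C}_u\subseteq N[u]$ (the definition of the canonical star separation gives $u\in C_u$, and the rest is the relation $C_u\subseteq\tilde{C}_u\subseteq N[u]$ recorded for revised collections). The remaining point is injectivity of $f$. If $\tilde{S}_u=\tilde{S}_v$ with $u\neq v$ in $M$, then $\tilde{C}_u=\tilde{C}_v$ forces $u\in\tilde{C}_v\subseteq N[v]$, hence $uv\in E(G)$, and $\tilde{B}_u=\tilde{B}_v$ gives $B:=B_u=B_v$; since $B$ is a component of $G\setminus N[u]$ we get $N(B)\subseteq N(u)$, and likewise $N(B)\subseteq N(v)$, so by diamond-freeness and $uv\in E(G)$ (via Lemma~\ref{lemma:clique-nbrs}) $N(B)$ lies in a single clique of $N(u)$, and is therefore a clique separating $B$ from $u$, contradicting that $G$ has no clique cutset. (Alternatively, one could pass to a system of representatives of $M$, one per distinct separation, which already suffices since $\tilde{\S}_M$ is a set.)

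Condition (iii): I would check $M\cap\tilde{A}_u=\emptyset$ for every $u\in M$. For the element $u$ itself this holds since $u\in\tilde{C}_u$; for $w\in M\setminus\{u\}$, if $w\in\tilde{A}_u$ then $w\in A_u$ because $\tilde{A}_u\subseteq A_u$, i.e. $u\leq_A w$, contradicting that $M$ is an antichain. The bulk of the work is condition (i), and that is essentially handed to us by Lemma~\ref{lemma:new-sepns-noncrossing}; the main thing to be careful about is confirming that the proof of that lemma really only uses pairwise $\leq_A$-incomparability, not that $M$ is the full set of unbalanced vertices, and the only genuinely new small argument is the injectivity in condition (ii).
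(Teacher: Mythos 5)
Your proof is correct and follows essentially the same route as the paper's: condition (i) is delegated to Lemma~\ref{lemma:new-sepns-noncrossing}, condition (ii) holds by construction with $v(\tilde{\S}_M)=M$ and $f(u)=\tilde{S}_u$, and condition (iii) follows from $\tilde{A}_u\subseteq A_u$ together with the $\leq_A$-minimality of $M$. The two points you flag as needing care --- that Lemma~\ref{lemma:new-sepns-noncrossing} really applies to the antichain $M$ rather than to the full set of minimal unbalanced vertices, and the injectivity of $f$ via the clique-cutset argument --- are both glossed over in the paper's one-paragraph proof, so your version is, if anything, slightly more complete.
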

\begin{proof}
By Lemma \ref{lemma:new-sepns-noncrossing}, it follows that $S_1$ and $S_2$ are nearly non-crossing for every distinct $S_1, S_2 \in \S$. By construction of $\tilde{S}_M$, there exists a set of vertices $v(\tilde{S}_M) = M$ such that there is a bijection $f$ from $v(\tilde{S}_M)$ to $\tilde{S}_M$ with $v \in C(f(v)) \subseteq N[v]$, given by $f(x) = \tilde{S}_x$ for every $x \in M$. Finally, since $M$ is minimal under the relation $\leq_A$ and $\tilde{A}_x \subseteq A_x$ for every $x \in M$, it holds that $M \cap A(\tilde{S}_x) = \emptyset$ for all $x \in M$. Therefore, $\tilde{S}_M$ is a smooth collection of separations of $G$. 
\end{proof}

By Lemma \ref{lemma:S_M-smooth}, there is a central bag $\beta_M$ for $\tilde{S}_M$ and an inherited weight function $w_M$ on $\beta_M$. 

\begin{lemma}
Let $G \in \mathcal{C}_t^*$, let $w:V(G) \to [0, 1]$ be a weight function on $G$ with $w(G) = 1$, and let $(\{v_1, \hdots, v_k\}, m, M, \tilde{S}_M)$ be the hub division of $G$. Let $\beta_M$ be the central bag for $\tilde{S}_M$ and let $w_M$ be the inherited weight function on $\beta_M$. Then, for all $1 \leq i \leq m - 1$, $v_i$ is not a wheel center of $\beta_M$. 
\label{lemma:no-wheels-in-beta}
\end{lemma}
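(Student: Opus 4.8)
The plan is to assume, for a contradiction, that $v_i$ is a wheel center of $\beta_M$ for some $i\le m-1$, fix a wheel $(H,v_i)$ with $H\subseteq\beta_M$ (so $v_i\in\beta_M$), and derive the contradiction $H\subseteq B_{v_i}\cup C_{v_i}$ against Lemma \ref{lemma:no_wheels}. Since $i\le m-1$ we have $v_i\in U$, so the canonical star separation $S_{v_i}=(A_{v_i},C_{v_i},B_{v_i})$ is defined, $w(B_{v_i})>\tfrac12$, and $v_i\in C_{v_i}\subseteq N[v_i]$, $B_{v_i}$ is connected, $N(B_{v_i})=C_{v_i}\setminus\{v_i\}$. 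As $\beta_M$ is an induced subgraph of the ($C_4$, diamond)-free graph $G$, Lemma \ref{lemma:clique-nbrs} applied in $\beta_M$ shows $N(v_i)\cap H$ is a union of pairwise anticomplete cliques of size at most $2$ (a $P_3$ in $N(v_i)$ yields a diamond, and $H$ has length at least $5$); since $v_i$ has three pairwise non-adjacent neighbours on $H$, at least three such cliques occur, from which one checks that $(H,v_i)$ is a proper wheel that is not universal (a twin or universal wheel contains a diamond on $v_i$ and two $H$-vertices with a common $H$-neighbour, and a short pyramid fails to have three pairwise non-adjacent neighbours of $v_i$ on $H$). So by Lemma \ref{lemma:no_wheels} applied to $S_{v_i}$, $H\not\subseteq B_{v_i}\cup C_{v_i}$; it thus suffices to prove $H\subseteq B_{v_i}\cup C_{v_i}$.

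I first reduce this to showing $H\setminus N[v_i]\subseteq B_{v_i}$. Granting that: every vertex of $H$ not adjacent to $v_i$ lies in $H\setminus N[v_i]\subseteq B_{v_i}$; and if $h\in N(v_i)\cap H$, its two neighbours on $H$ are non-adjacent and not both in $N(v_i)$ (else $v_i,h$ and these two induce a diamond), so one of them lies in $H\setminus N[v_i]\subseteq B_{v_i}$, whence $h\in N(B_{v_i})=C_{v_i}\setminus\{v_i\}$; hence $H\subseteq B_{v_i}\cup C_{v_i}$. Now if $v_i\in M$, then $\tilde S_{v_i}\in\tilde S_M$, so $H\subseteq\beta_M\subseteq\tilde B_{v_i}\cup\tilde C_{v_i}=B_{v_i}\cup\tilde C_{v_i}\subseteq B_{v_i}\cup N[v_i]$, and $H\setminus N[v_i]\subseteq B_{v_i}$ follows. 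In particular this shows that \emph{no vertex of $M$ is a wheel center of $\beta_M$}, which I use below.

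Next I treat $v_i\notin M$. Since $v_i\in\{v_1,\dots,v_{m-1}\}$ and $M$ consists of the $\leq_A$-minimal such vertices, pick $u\in M$ with $u\leq_A v_i$ and $u\ne v_i$, so $v_i\in A_u$. Because $v_i\in\beta_M$ is disjoint from $\tilde A_u$ while $A_u\setminus N(u)\subseteq\tilde A_u$, we get $u\sim v_i$. Apply Lemma \ref{lemma:shields} to the canonical star separations $S_u,S_{v_i}$ (its hypotheses hold: $G$ is ($C_4$, diamond)-free with no clique cutset, $v_i\in A(S_u)$, and $B_{v_i}\cap B_u\ne\emptyset$ since both have weight $>\tfrac12$) to conclude $B_u\cup C_u\subseteq B_{v_i}\cup C_{v_i}$; in particular $A_{v_i}\subseteq A_u$, and $u\in C_u\subseteq B_{v_i}\cup C_{v_i}$ with $u\sim v_i$ gives $u\in C_{v_i}\setminus\{v_i\}=N(B_{v_i})$, so $u$ has a neighbour in $B_{v_i}$. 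Also $u\in M$, so $(H,u)$ is not a wheel by the previous paragraph. Assume now $H\setminus N[v_i]\not\subseteq B_{v_i}$. From $H\subseteq\beta_M\subseteq B_u\cup\tilde C_u$, $B_u\subseteq B_{v_i}\cup C_{v_i}$, $\tilde C_u\subseteq N[u]$ and $u\in C_{v_i}$, every vertex of $H$ outside $B_{v_i}\cup C_{v_i}$ lies in $N(u)$; so $X:=H\cap A_{v_i}\subseteq N(u)$ is nonempty. For $x\in X$, a short analysis of its two neighbours on $H$ ($C_4$-freeness forbids both being neighbours of $v_i$; diamond-freeness applied to $u$ and three consecutive $H$-vertices bounds how far the escaped part of $H$ around $x$ can run) shows $x$ lies in the interior of a sector $P$ of $(H,v_i)$ with $1\le|P^*|\le 2$, $P^*\subseteq A_{v_i}\cap N(u)$, and both ends of $P$ neighbours of $v_i$ but not of $u$. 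Since $(H,u)$ is not a wheel and $G$ is diamond-free, $N(u)\cap H$ is a union of at most two anticomplete cliques of size at most $2$; combining this with the structure of $P$, with $u$ having a neighbour in $B_{v_i}$, and with a short case analysis using $C_4$- and diamond-freeness (and Lemma \ref{lemma:common_nbrs} where it applies), we reach a contradiction. Hence $H\setminus N[v_i]\subseteq B_{v_i}$, as required.

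In all cases $H\subseteq B_{v_i}\cup C_{v_i}$, contradicting Lemma \ref{lemma:no_wheels}, so no $v_i$ with $i\le m-1$ is a wheel center of $\beta_M$. The main obstacle is the case $v_i\notin M$: the shield lemma together with the properties $\tilde A_u\subseteq A_u$ and $A_u\setminus N(u)\subseteq\tilde A_u$ of the revised separations confines every escaped vertex of $H$ to the neighbourhood of one $u\in M$ with $u\sim v_i$, but turning ``$H\cap A_{v_i}\subseteq N(u)$, $(H,u)$ not a wheel, $u$ adjacent to both $v_i$ and $B_{v_i}$'' into a forbidden induced subgraph is the delicate part and is where essentially all of the casework lies.
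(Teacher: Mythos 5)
Your overall skeleton matches the paper's: assume $(H,v_i)$ is a wheel of $\beta_M$, use Lemma \ref{lemma:no_wheels} to get $H\cap A_{v_i}\neq\emptyset$, and reduce via $\leq_A$-minimality and Lemma \ref{lemma:shields} to a vertex $u\in M$ with $A_{v_i}\subseteq A_u$, so that $H$ meets $A_u$ while $H\subseteq\beta_M\subseteq \tilde B_u\cup\tilde C_u$. Your handling of the case $v_i\in M$ is correct (and your verification that a wheel in a diamond-free graph is automatically proper and non-universal is a point the paper leaves implicit). But in the main case $v_i\notin M$ there is a genuine gap: the final contradiction is never derived. You arrive at ``$X=H\cap A_{v_i}\subseteq N(u)$ is nonempty'' and then defer to ``a short case analysis using $C_4$- and diamond-freeness (and Lemma \ref{lemma:common_nbrs} where it applies)'' which you yourself flag as ``where essentially all of the casework lies.'' That casework is not done, and the structure you set up for it is off: you assert that the ends of the sector $P$ containing $x$ are neighbours of $v_i$ \emph{but not of $u$}, whereas in fact they must lie in $N[u]$ --- they are vertices of $H\subseteq\beta_M\subseteq B_u\cup\tilde C_u$ adjacent to $x\in A_{v_i}\subseteq A_u$, and $A_u$ is anticomplete to $B_u$, so they lie in $\tilde C_u\subseteq N[u]$. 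Pursuing your sketch as written would therefore stall.

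The same observation closes the argument in one line, and is exactly the paper's finish: for $x\in H\cap A_u$, both $H$-neighbours $x',x''$ of $x$ lie in $\tilde C_u\subseteq N[u]$ (as does $x$ itself, since $x\in\beta_M\setminus \tilde B_u$), and $x',x''$ are non-adjacent because $G$ is $C_4$-free, so $\{u,x,x',x''\}$ is a diamond. No sectors, no appeal to Lemma \ref{lemma:common_nbrs}, and no use of the fact that $u$ has a neighbour in $B_{v_i}$ are needed. You had already established every ingredient of this ($A_{v_i}\subseteq A_u$, $\beta_M\subseteq B_u\cup\tilde C_u$, $\tilde C_u\subseteq N[u]$); the missing step is simply to apply the containment $\beta_M\subseteq B_u\cup\tilde C_u$ to the neighbours of $x$ on $H$ rather than only to $x$ itself.
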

\begin{proof}
Let $1 \leq i \leq m-1$ and suppose that $(H, v_i)$ is a wheel of $\beta_M$. 

\sta{\label{helper} $H \cap A_{v_j} \neq \emptyset$ for some $v_j \in M$.}

By Lemma \ref{lemma:no_wheels}, $H \not \subseteq B_{v_i} \cup C_{v_i}$, so $H \cap A_{v_i} \neq \emptyset$. If $v_i \in M$, then $j = i$ satisfies the statement, so we may assume $v_i \not \in M$. Then, there exists $v_j \in M$ such that $v_j \leq_A v_i$, so $v_i \in A_{v_j}$. Now, by Lemma \ref{lemma:shields}, $A_{v_i} \subseteq A_{v_j}$, and so $H \cap A_{v_j} \neq \emptyset$. This proves \eqref{helper}.  \\

By \eqref{helper}, there exists $v_j \in M$ such that $A_{v_j} \cap H \neq \emptyset$. Let $x \in A_{v_j} \cap H$. Since $v_j \in M$, it follows that $\beta_M \subseteq \tilde{B}_{v_j} \cup \tilde{C}_{v_j}$, and so $x \in \tilde{C}_{v_j}$. Let $x'$ and $x''$ be the neighbors of $x$ in $H$. Since $A_{v_j}$ is anticomplete to $B_{v_j}$ and thus $\tilde{B}_{v_j}$, it holds that $x', x'' \in \tilde{C}_{v_j}$. But $v_j$ is complete to $\tilde{C}_{v_j}$, and so $\{v_j, x, x', x''\}$ is a diamond, a contradiction. 
\end{proof}

Let $R(t, s)$ denote the minimum integer such that every graph on at least $R(t, s)$ vertices contains either
a clique of size $t$ or a stable set of size $s$. 
\begin{theorem}
\label{thm:wheel-free}
Let $G$ be a (theta, pyramid, prism, wheel, $K_t$)-free graph and let  $w:V(G) \to [0, 1]$ be a weight function on $G$. Then, $G$ has a $(w, \frac{1}{2})$-balanced separator of size at most $R(t, 4) + 1$. 
\end{theorem}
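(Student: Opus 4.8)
The plan is to reduce the statement to a bound on $\tw(G)$ and then apply Lemma~\ref{lemma:tw-to-weighted-separator}. After rescaling (the case $w\equiv 0$ being trivial) we may assume $w(G)=1$, and by Lemma~\ref{lemma:clique-cutsets-tw} it then suffices to bound the treewidth of an induced subgraph $G'$ of $G$ that has no clique cutset.

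The crux is the structural fact that \emph{a $($theta, pyramid, prism, wheel$)$-free graph with no clique cutset is a complete graph or a hole}. This is a known structure theorem --- it is the characterization of the graphs with no Truemper configuration, the ``universally signable'' graphs of Conforti, Cornu\'ejols, Kapoor, and Vu\v{s}kovi\'{c} --- so one option is simply to quote it. The other option is to reprove it in the spirit of Theorem~\ref{thm:theta-triangle-wheel-free}, using Lemma~\ref{lemma:three_vtx_attachments}: assuming $G'$ has no clique cutset one first rules out star cutsets (a non-adjacent pair of cutset vertices together with paths through two distinct components of $G'$ minus the cutset gives a theta, or, once triangles are allowed, a pyramid, a prism, or a wheel), and then applies Lemma~\ref{lemma:three_vtx_attachments} to triples of pairwise non-adjacent neighbours of a vertex $v$ inside the connected graph $G'\setminus N[v]$ (cases (ii) and (iii) give a theta and a pyramid; case (i) gives a wheel or a ``short pyramid'', which contains an induced pyramid), concluding that no $N_{G'}(v)$ contains a stable set of size $3$, and from there one argues that $G'$ is complete or a hole.

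Granting the structural fact, a clique-cutset-free induced subgraph of $G$ is a hole (treewidth $2$) or, as $G$ is $K_t$-free, a complete graph on at most $t-1$ vertices (treewidth at most $t-2$); since $R(t,4)\ge t\ge\max\{2,t-2\}$, Lemma~\ref{lemma:clique-cutsets-tw} gives $\tw(G)\le R(t,4)$, and Lemma~\ref{lemma:tw-to-weighted-separator} with $k=R(t,4)$ and $c=\tfrac12$ produces a $(w,\tfrac12)$-balanced separator of $G$ of size at most $R(t,4)+1$.

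The main obstacle is the structural fact. If it is quoted, the only subtlety is matching the precise ``no clique cutset'' formulation; if it is reproved, the work lies in the star-cutset elimination and in case (i) of Lemma~\ref{lemma:three_vtx_attachments}, both of which break into more subcases than in the triangle-free setting of Theorem~\ref{thm:theta-triangle-wheel-free} because neighbourhoods may now contain cliques and the list of forbidden three-path configurations is longer. It is worth observing that this argument actually gives $\tw(G)\le\max\{2,t-2\}$, so the Ramsey bound $R(t,4)+1$ is comfortably generous and is presumably stated this way for uniformity with later parts of the paper, where the relevant separator is exhibited as a closed neighbourhood $N[v]$ whose neighbourhood $N(v)$ has no stable set of size $4$.
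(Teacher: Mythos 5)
There is a genuine gap, and it lies exactly where you located the crux: the structural fact you invoke is about the wrong class. The theorem of Conforti, Cornu\'ejols, Kapoor, and Vu\v{s}kovi\'{c} on universally signable graphs excludes \emph{all} Truemper wheels, i.e.\ every pair $(H,v)$ where $v$ has at least three neighbours in the hole $H$. But in this paper a \emph{wheel} requires three pairwise \emph{nonadjacent} neighbours in $H$. Since diamonds are not excluded in Theorem~\ref{thm:wheel-free}, the class there still admits twin wheels, line wheels, and short pyramids in the sense of a vertex with three or four neighbours on a hole none three of which are pairwise nonadjacent (a short pyramid does contain an induced pyramid, as you note, but the others do not). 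Concretely, take a six-hole $H = h_1\dd h_2\cc h_6\dd h_1$ and add a vertex $v$ adjacent to $h_1, h_2, h_3$: this graph is (theta, pyramid, prism, wheel, $K_4$)-free under the paper's definitions, has no clique cutset, and is neither complete nor a hole. So the structure theorem cannot be quoted, and the concluding step of your sketched reproof (``$G'$ is complete or a hole'') fails even though every neighbourhood in this example has stability number $2$. The derived bound $\tw(G)\le\max\{2,t-2\}$ is therefore not established.

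For comparison, the paper's proof does not go through a ``complete or hole'' decomposition at all: it cites Theorems 4.4 and 2.3 of \cite{logpaper}, which for (theta, pyramid, prism, $K_t$)-free graphs give $\tw(G) \le R(t,4) + |\Hub(G)|$, where $\Hub(G)$ is defined via the paper's notion of wheel; wheel-freeness in that sense gives $|\Hub(G)| = 0$, hence $\tw(G) \le R(t,4)$, and Lemma~\ref{lemma:tw-to-weighted-separator} finishes as in your last paragraph. That machinery is precisely what absorbs the twin-wheel/line-wheel configurations your argument leaves unhandled, and it is also why the Ramsey quantity $R(t,4)$ appears. Your proposal could be repaired either by citing that result, or by strengthening the hypotheses to exclude all Truemper wheels (which the intended application does not permit, since $\beta_M$ may contain twin wheels).
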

\begin{proof}
By Theorems 4.4 and 2.3 of \cite{logpaper}, it follows that $\tw(G) \leq R(t, 4) + |\Hub(G)|$.  Since $G$ is wheel-free, it follows that $|\Hub(G)| = 0$. Therefore, $\tw(G) \leq R(t, 4)$. By Lemma \ref{lemma:tw-to-weighted-separator}, $G$ has a $(w, \frac{1}{2})$-balanced separator of size at most $R(t, 4) + 1$. 
\end{proof}
Finally, we prove the main result of this section: that if $\beta_M$ is pyramid-free, then $\beta_M$ has a balanced separator of bounded size. 
\begin{theorem}
\label{thm:mainthm-betabs}
Let $G \in \mathcal{C}_t^*$, let $w:V(G) \to [0, 1]$ be a weight function on $G$, and let $(\{v_1, \hdots, v_k\},$ $m, M, \tilde{S}_M)$ be the hub division of $G$. Let $\beta_M$ be the central bag for $\tilde{S}_M$ and let $w_M$ be the inherited weight function on $\beta_M$. Assume that $\beta_M$ is pyramid-free. Then, $\beta_M$ has a $(w_M, \frac{1}{2})$-balanced separator of size at most $\max(R(t, 4) + 1, 6t + 4\delta_t)$.
\end{theorem}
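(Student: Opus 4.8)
The plan is to split on whether the central bag $\beta_M$ contains a wheel center at all. First suppose $\beta_M$ is wheel-free. Then, since $\beta_M$ is an induced subgraph of $G \in \mathcal{C}_t^*$, it is (theta, prism, $K_t$)-free, and by hypothesis it is pyramid-free; hence $\beta_M$ is (theta, pyramid, prism, wheel, $K_t$)-free and Theorem \ref{thm:wheel-free} applies directly, giving a $(w_M, \frac{1}{2})$-balanced separator of $\beta_M$ of size at most $R(t,4)+1$, which is within the claimed bound.

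Now suppose $\beta_M$ contains a wheel, say with center $v$. The key point is to produce a \emph{balanced} vertex (in $G$) that lies in $\beta_M \setminus v(\tilde{S}_M)$ and has few neighbors in $\Hub(\beta_M)$, so that Lemma \ref{lemma:balanced_vtx_bs} can be invoked. I would argue as follows: by Lemma \ref{lemma:no-wheels-in-beta}, none of $v_1, \dots, v_{m-1}$ is a wheel center of $\beta_M$; and for $i \geq m$, $v_i \notin U$, i.e.\ $v_i$ is balanced in $G$. Since $v$ is a wheel center in $\beta_M$, $v \in \Hub(\beta_M) \subseteq \Hub(G) = \{v_1, \dots, v_k\}$, and by the above $v = v_i$ for some $i \geq m$, so $v$ is balanced in $G$. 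Moreover $v$ is a wheel center in $\beta_M$, hence in particular $v \notin M = v(\tilde{S}_M)$ — here I'd use that every element of $M$ is complete to $\tilde C$ of its own separation, so being a wheel center would force a diamond, exactly as in the proof of Lemma \ref{lemma:no-wheels-in-beta} (indeed the case $v_i \in M$ in \eqref{helper} is dismissed there); alternatively $M \subseteq \{v_1,\dots,v_{m-1}\}$ by construction of the hub division, and we've already placed $v$ among $v_m,\dots,v_k$. Either way, $v \in \beta_M \setminus v(\tilde{S}_M)$ is balanced in $G$.

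Next I need the bound $|N_{\beta_M}(v) \cap \Hub(\beta_M)| < 4\delta_t + 1$, i.e.\ $\leq 4\delta_t$. This is where the degeneracy ordering from Lemma \ref{lemma:degeneracy} and the construction of the hub division do the work: the ordering $v_1, \dots, v_k$ of $\Hub(G)$ respects the partition $T_1, \dots, T_\ell$, and $\Hub(\beta_M) \subseteq \Hub(G)$. Since $v = v_i$ with $i \geq m$, and a neighbor $u$ of $v$ in $\Hub(\beta_M)$ is some $v_j \in \Hub(G)$; I would show that all such neighbors can be taken to come earlier in the degeneracy order — the subtlety is that $j$ could exceed $i$, so I'd instead apply the degeneracy bound at $v$ itself: $v$ lies in some $T_r$, and $|N_G(v) \setminus \bigcup_{s<r} T_s| \leq 4\delta_t$, while the neighbors of $v$ in $\bigcup_{s<r}T_s$ that are hubs of $\beta_M$ must be handled separately. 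The cleanest route is to observe that in $\beta_M$ the vertices $v_1, \dots, v_{m-1}$ are \emph{not} hubs (Lemma \ref{lemma:no-wheels-in-beta}), so $\Hub(\beta_M) \subseteq \{v_m, \dots, v_k\}$, and all of these lie in parts $T_r$ with $r \geq t(v_m)$; combining with the degeneracy property of Lemma \ref{lemma:degeneracy} applied to $v = v_i$ (noting $v_i$'s neighbors in $\Hub(\beta_M)$ that precede it in the order lie in later-or-equal parts, hence are counted by the "$\leq 4\delta_t$" bound), we get $|N_{\beta_M}(v) \cap \Hub(\beta_M)| \leq 4\delta_t$. Then Lemma \ref{lemma:balanced_vtx_bs}, applied with $C = 4\delta_t + 1$, yields a $(w_M, \frac{1}{2})$-balanced separator of $\beta_M$ of size at most $6\omega(\beta_M) + 4\delta_t \leq 6(t-1) + 4\delta_t \leq 6t + 4\delta_t$, using that $\beta_M$ is $K_t$-free so $\omega(\beta_M) \leq t-1$.

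The main obstacle I anticipate is precisely the bookkeeping in the previous paragraph: making rigorous that the degeneracy ordering of $\Hub(G)$, after passing to the central bag $\beta_M$ (where the early hubs $v_1, \dots, v_{m-1}$ have been "deactivated" as hub centers), still bounds the number of $\Hub(\beta_M)$-neighbors of the chosen balanced vertex $v$ by $4\delta_t$. One must be careful that $\Hub(\beta_M) \subseteq \Hub(G)$ (an induced-subgraph hole is a hole of $G$, and a wheel center for it in $\beta_M$ is one in $G$), that the relevant vertex $v$ is indeed one of $v_m, \dots, v_k$, and that the "later parts" in the degeneracy order align with the direction in which Lemma \ref{lemma:degeneracy} gives its guarantee. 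Once this is set up, the rest is assembling Theorem \ref{thm:wheel-free} and Lemma \ref{lemma:balanced_vtx_bs} and taking the maximum of the two bounds.
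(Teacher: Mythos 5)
Your overall architecture (wheel-free case via Theorem \ref{thm:wheel-free}, otherwise a balanced vertex fed into Lemma \ref{lemma:balanced_vtx_bs}) matches the paper, but your second case has a genuine gap: you claim that every $v_i$ with $i \geq m$ is balanced in $G$, and hence that an arbitrary wheel center $v$ of $\beta_M$ is balanced. This is false. The hub division only guarantees that $v_m$ is the \emph{minimum} element of $\Hub(G) \setminus U$ in the chosen ordering, so $\{v_1, \dots, v_{m-1}\} \subseteq U$; the vertices $v_{m+1}, \dots, v_k$ may perfectly well be unbalanced. So the wheel center you pick need not be balanced, and Lemma \ref{lemma:balanced_vtx_bs} cannot be applied to it. A second, related problem is the degeneracy bookkeeping: for $v = v_i \in T_{t(v_i)}$, Lemma \ref{lemma:degeneracy} controls only the neighbors of $v_i$ in parts $T_r$ with $r \geq t(v_i)$, whereas $\Hub(\beta_M) \subseteq \{v_m, \dots, v_k\}$ only guarantees its elements lie in parts $T_r$ with $r \geq t(v_m)$; if $t(v_m) \leq r < t(v_i)$, those hub neighbors of $v_i$ are not counted by the $4\delta_t$ bound (your parenthetical ``neighbors that precede it in the order lie in later-or-equal parts'' has the direction reversed).

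Both issues are resolved by using $v_m$ itself as the balanced vertex, which is what the paper does: it splits on $m = k+1$ versus $m < k+1$ rather than on wheel-freeness. Since $v_m$ is balanced by definition and $\Hub(\beta_M) \subseteq \{v_m, \dots, v_k\}$ lies entirely in parts of index at least $t(v_m)$, the degeneracy bound at $v_m$ gives $|N(v_m) \cap \Hub(\beta_M)| \leq 4\delta_t$ cleanly. The price is that one must separately verify $v_m \in \beta_M$ (it is no longer automatic, as it is for a wheel center of $\beta_M$); the paper does this by observing that if $v_m \in A_{v_i}$ for some $v_i \in M$, then $B_{v_i}$ would be contained in a component of $G \setminus N[v_m]$ of weight at least $\frac{1}{2}$, contradicting that $v_m$ is balanced. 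You would need to add this step, replace your chosen $v$ by $v_m$ (noting that your case ``$\beta_M$ has a wheel'' forces $m < k+1$, so $v_m$ exists), and the rest of your assembly of Theorem \ref{thm:wheel-free}, Lemma \ref{lemma:balanced_vtx_bs}, and $\omega(\beta_M) \leq t$ goes through.
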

\begin{proof}
First, suppose that $m = k+1$. Then, by Lemma \ref{lemma:no-wheels-in-beta}, $v$ is not a wheel center of $\beta_M$ for all $v \in \Hub(G)$. Since $\Hub(\beta_M) \subseteq \Hub(G)$, it follows that $\beta_M$ is wheel-free. By Theorem \ref{thm:wheel-free}, $\beta_M$ has a $(w_M, \frac{1}{2})$-balanced separator of size at most $R(t, 4) + 1$. 

Now, assume $m < k + 1$. We claim that $v_m \in \beta_M$. Suppose that $v_m \in A_{v_i}$ for some $v_i \in M$. Then, $N[v_m] \subseteq A_{v_i} \cup C_{v_i}$, so $B_{v_i}$ is contained in a connected component $D$ of $G \setminus N[v_m]$. Since $v_i \in M$, it follows that $v_i$ is unbalanced, so $w(B_{v_i}) \geq \frac{1}{2}$. But now $w(D) \geq \frac{1}{2}$, so $v_m$ is unbalanced, a contradiction. Therefore, $v_m \not \in A_{v_i}$ for all $v_i \in M$. Since for all $v_i \in M$ it holds that $\tilde{A}_{v_i} \subseteq A_{v_i}$, it follows that $v_m \in \tilde{B}_{v_i} \cup \tilde{C}_{v_i}$, and so $v_m \in \beta_M$.

Next, consider $N(v_m) \cap \Hub(\beta_M)$. By Lemma \ref{lemma:no-wheels-in-beta},  $\Hub(\beta_M) \subseteq \{v_{m}, v_{m+1}, \hdots, v_k\}$. Therefore, $|N(v_m) \cap \Hub(\beta_M)| \leq 4\delta_t$. Finally, by Lemma \ref{lemma:S_M-smooth}, $\tilde{S}_M$ is a smooth collection of separations of $G$. Now, by Lemma \ref{lemma:balanced_vtx_bs}, $\beta_M$ has a $(w_M, \frac{1}{2})$-balanced separator of size $6\omega(\beta_M) + 4\delta_t$.  
\end{proof}

\section{Extending balanced separators}
\label{sec:extending-bs}
In this section, we prove that we can construct a bounded balanced separator of $G$ given a bounded balanced separator of $\beta_M$. Together with the main result of the previous section, this is sufficient to prove Theorem \ref{thm:main-nonspecific}. First, we need the following lemma. 
\begin{lemma}
Let $G \in \mathcal{C}_t^*$, let $w:V(G) \to [0, 1]$ be a weight function on $G$ with $w(G) = 1$, and let $(\{v_1, \hdots, v_k\}, m, M, \tilde{S}_M)$ be the hub division of $G$. Let $\beta_M$ be the central bag for $\tilde{S}_M$. Let $v \in M$ be such that $v$ is not a pyramid apex in $\beta$. Then, $|N_{\beta_M}(v) \setminus \Hub(\beta_M)| \leq 2t$. 
\label{lemma:small-nbrs}
\end{lemma}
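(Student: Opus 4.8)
The plan is to bound $|N_{\beta_M}(v) \setminus \Hub(\beta_M)|$ by controlling the number of cliques $K_i$ in the decomposition $N_{\beta_M}(v) \setminus \Hub(\beta_M) = K_1 \cup \cdots \cup K_s$ (from Lemma~\ref{lemma:clique-nbrs}) and the size of each $K_i$. Since $G \in \mathcal{C}_t^*$ is $K_t$-free, $\beta_M$ is $K_t$-free, so $\omega(\beta_M) \leq t-1$ and each $K_i$ has at most $t-1$ vertices; together with $\{v\}$ each $K_i \cup \{v\}$ is a clique, so in fact $|K_i| \leq t-1$. Hence it suffices to show that there are at most two such cliques, i.e. $s \leq 2$, which would give $|N_{\beta_M}(v) \setminus \Hub(\beta_M)| \leq 2(t-1) \leq 2t$.

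To bound $s$, first I would observe that since $v \in M \subseteq \Hub(G)$ but $v$ is (by hypothesis) not a pyramid apex in $\beta_M$, and $\beta_M$ is $(C_4, \text{theta}, \text{prism}, \text{even wheel}, \text{diamond})$-free with no clique cutset (being an induced subgraph of such a graph, though one should double-check whether $\beta_M$ has a clique cutset — if it does, Lemma~\ref{lemma:clique-cutsets-tw} lets us reduce, but here we need the cutset-free hypothesis for the attachment lemmas). The cleanest route is: suppose $s \geq 3$, so there are vertices $x_1 \in K_a$, $x_2 \in K_b$, $x_3 \in K_c$ for distinct $a,b,c$. These form an independent set (the $K_i$ are pairwise anticomplete). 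Since $\beta_M$ has no clique cutset, $\beta_M \setminus N[v]$ is connected — or more carefully, there is a connected induced subgraph of $\beta_M \setminus N[v]$ meeting all of $N(x_1), N(x_2), N(x_3)$ (here one needs that each $K_i$ actually has a neighbor outside $N[v]$, which follows since $\beta_M$ has no clique cutset: if $K_i \cup \{v\}$, a clique, separated $\beta_M$, contradiction; so $N_{\beta_M}[K_i \cup \{v\}]$ does not disconnect, meaning $K_i$ has a neighbor in $\beta_M \setminus N[v]$). Then pick $H \subseteq \beta_M \setminus N[v]$ inclusion-minimal connected containing a neighbor of each $x_j$, and apply Lemma~\ref{lemma:three_vtx_attachments} exactly as in the proof of Lemma~\ref{lemma:bounding-nbrhood-helper}: case (ii) gives a theta $\{v, x_1, x_2, x_3\} \cup H$, case (iii) gives a pyramid with apex $v$, and case (i) gives either a wheel centered at $x_2$ on the hole $H \cup \{v\}$ (contradicting $x_2 \notin \Hub(\beta_M)$) or a pyramid with apex $v$ — all contradictions. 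This forces $s \leq 2$.

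Actually, I realize that Lemma~\ref{lemma:bounding-nbrhood-helper} already does essentially this work: it says at most two of $K_1, \ldots, K_s$ have a neighbor in any single component $D$ of $\beta_M \setminus N[v]$. If $\beta_M \setminus N[v]$ is connected (which holds since $\beta_M$ has no clique cutset and $N[v]$ is... no, $N[v]$ need not be a clique). So the missing ingredient is handling the case where $\beta_M \setminus N[v]$ has several components: the right statement is that each $K_i$ (with $i$ such that $K_i$ has a neighbor outside $N[v]$) is ``associated'' to the components it sees, and one shows two components cannot both see two common cliques, etc. The hard part, and the main obstacle, will be exactly this bookkeeping across multiple components of $\beta_M \setminus N[v]$: I expect one argues that if $K_i$ and $K_j$ both have neighbors in two distinct components $D$ and $D'$, one can build a hole through $D$ and $D'$ on which $v$ has neighbors in $K_i$ and $K_j$, then get a pyramid or theta or wheel; and that more than two cliques total would, by a pigeonhole/connectivity argument combined with Lemma~\ref{lemma:bounding-nbrhood-helper} applied per component, again produce a forbidden configuration. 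I would carry out: (1) reduce each $K_i$ has a neighbor outside $N[v]$; (2) each $|K_i| \leq t-1$; (3) via Lemma~\ref{lemma:three_vtx_attachments} (or directly Lemma~\ref{lemma:bounding-nbrhood-helper} plus a cross-component argument) show $s \leq 2$; (4) conclude $|N_{\beta_M}(v)\setminus\Hub(\beta_M)| \leq 2(t-1) \leq 2t$.
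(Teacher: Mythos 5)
Your skeleton matches the paper's: decompose $N_{\beta_M}(v)\setminus\Hub(\beta_M)$ into anticomplete cliques via Lemma~\ref{lemma:clique-nbrs}, bound each clique by $t-1$ because $K_i\cup\{v\}$ is a clique and $G$ is $K_t$-free, and reduce to showing that only two cliques are relevant via Lemma~\ref{lemma:bounding-nbrhood-helper}. But the step you yourself flag as ``the hard part'' --- passing from the per-component bound of Lemma~\ref{lemma:bounding-nbrhood-helper} to a global bound when $\beta_M\setminus N[v]$ is disconnected --- is exactly where the content of this lemma lies, and your sketch of it does not close. The paper proves two dedicated claims. First, for any two cliques meeting $C_v\cap\beta_M$ there is a connecting path with interior in $\beta_M\setminus N[v]$; this is obtained by taking a shortest path through $B_v$ in $G$ and rerouting it into $\beta_M$, using minimality, the structure of the revised separations $\tilde{S}_{v_i}$, and Lemma~\ref{lemma:forcer_lemma}. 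Second, a maximality argument shows all such cliques attach to a single component $D$: otherwise three connecting paths lie in three distinct components and their union is a hole $H$ with $(H,v)$ a wheel of $\beta_M$. Crucially, that wheel is centered at $v$ itself, and wheels are \emph{not} forbidden in $\mathcal{C}_t^*$ (only even wheels are); indeed $v\in M\subseteq\Hub(G)$ is a wheel center of $G$. The contradiction comes from Lemma~\ref{lemma:no-wheels-in-beta}, which says the central-bag construction destroys all wheels of $\beta_M$ centered at $v_1,\dots,v_{m-1}$. Your proposal never invokes that lemma, and ``get a pyramid or theta or wheel'' is not a contradiction for a wheel centered at $v$ without it.

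Two further problems. Your reduction step (1) leans on $\beta_M$ having no clique cutset, which is not established anywhere (only $G$ is assumed cutset-free, and an induced subgraph of a cutset-free graph can have clique cutsets); the paper avoids this by arguing through $C_v$ and $B_v$, which are defined in $G$. And your final count $s\le 2$ overlooks the vertices of $N_{\beta_M}(v)$ lying in $\tilde{C}_v\setminus C_v$, i.e.\ common neighbours of $v$ and vertices of $N(v)\cap M$: the paper's argument only bounds by two the number of cliques meeting $C_v\cap\beta_M$, and then uses diamond-freeness together with $K_t$-freeness to absorb the remaining vertices of $\tilde{C}_v$ into the $2t$ bound.
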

\begin{proof}
By Lemma \ref{lemma:clique-nbrs}, let $N_{\beta_M}(v) \setminus \Hub(\beta_M) = K_1 \cup \cdots \cup K_\ell$, where $K_1, \hdots, K_\ell$ are cliques and $K_i$ and $K_j$ are anticomplete to each other for $1 \leq i < j \leq \ell$. Recall that $B_v$ is connected and $C_v \setminus \{v\} = N(B_v)$. 

\sta{\label{path-helper} Let $k_1 \in K_1 \cap C_v \cap \beta_M$ and $k_2 \in K_2 \cap C_v \cap \beta_M$ be chosen such that the path $P$ from $k_1$ to $k_2$ with
interior in $B_v$ is as short as possible among choices of $k_1 \in K_1$, $k_2 \in K_2$. Then, there exists a path $Q$ from $k_1$ to $k_2$ with $Q^* \subseteq \beta_M \setminus N[v]$.}

If $P^* \subseteq \beta_M$, then $Q = P$ satisfies the statement, so we may assume $P^* \not \subseteq \beta_M$. Assume that $P$ is chosen such that $P^* \setminus \beta_M$ is minimal. Then, there exists $v_i \in M$ such that $P^* \cap \tilde{A}_{v_i} \neq \emptyset$. Let $a, b$ be the vertices of $\tilde{C}_{v_i} \cap P^*$ closest to $k_1$, $k_2$, respectively. Since there is a vertex of $\tilde{A}_{v_i}$ on the path from $a$ to $b$ through $P^*$, it follows that $a$ is not adjacent to $b$. If $v_i$ is not adjacent to $v$, then $P' = k_1 \dd P \dd a \dd v_i \dd b \dd P \dd k_2$ is a path from $k_1$ to $k_2$ such that either $P'$ is shorter than $P$ or $|P' \setminus \beta_M| < |P \setminus \beta_M|$, a contradiction. Therefore, $v_i$ is adjacent to $v$. Let $H$ be the hole given by $\{v\} \cup P$. Now, $v_i$ has at least three neighbors in $H$. Suppose that $v_i$ is adjacent to $k_1$. Then, $v_i \in K_1$, so $v_i$ is not adjacent to $k_2$, and $v_i \dd b \dd P \dd k_2$ is a shorter path from a vertex of $K_1$ to a vertex of $K_2$, a contradiction. It follows that $v_i$ is anticomplete to $\{k_1, k_2\}$. By Lemma \ref{lemma:forcer_lemma}, $\{k_1, k_2\} \cap A_{v_i} \neq \emptyset$, and since $v_i$ is complete to $\tilde{C}_{v_i}$ and anticomplete to $\{k_1, k_2\}$, it follows that $\{k_1, k_2\} \cap \tilde{A}_{v_i} \neq \emptyset$. But $\{k_1, k_2\} \subseteq \beta_M$, a contradiction.   This proves \eqref{path-helper}. \\

\sta{\label{comp-attaches-everywhere} There is a  component $D$ of $\beta_M \setminus N[v]$ such that for every $i \in \{1,\hdots,t\}$ with $K_i \cap C_v \cap \beta_M \neq \emptyset$,
some vertex of $K_i \cap C_v \cap \beta_M$ has a neighbor in $D$.}

Let $D$ be a component of $\beta_M \setminus N[v]$ with neighbors in as many cliques $K_i$ as possible. Let $K_1, \hdots, K_j$ be the cliques with a neighbor in $D$. By \eqref{path-helper}, there is a path $Q$ from $K_1$ to $K_{j+1}$ with
interior in $\beta_M \setminus N[v]$. Let $D'$ be the component of $\beta_M \setminus N[v]$ containing
$Q^*$, so $D \neq D'$. By the maximality of D, we may assume $D'$ does not have a neighbor in $K_2$.
By \eqref{path-helper}, there is also a path $R$ from $K_{j+1}$ to $K_2$ with interior in $\beta_M \setminus N[v]$.
Let $D''$ be the component of $\beta_M \setminus N[v]$ with $R^* \subseteq D''$.
Then $D'' \neq D,D'$. Let $P$ be a path from $K_1$ to $K_2$ with interior in $D$. Since $P, Q$, and $R$ are in distinct components of $\beta_M \setminus N[v]$, it follows that $P \cup Q \cup R$ is a hole of $\beta_M$. 
Now, $(P+Q+R,v)$ is a wheel in $\beta_M$, contrary to Lemma \ref{lemma:no-wheels-in-beta}. This proves \eqref{comp-attaches-everywhere}. \\

Let $I = \{1 \leq i \leq \ell : K_i \cap C_v \cap \beta_M \neq \emptyset\}$.
By \eqref{comp-attaches-everywhere}, there exists a component $D$ of $\beta_M \setminus N[v]$ such that $K_i$ has a neighbor in $D$ for every $i \in I$. By Lemma \ref{lemma:bounding-nbrhood-helper}, at most two of $K_1, \hdots, K_\ell$ have a neighbor in $D$. Therefore, $|I| \leq 2$. Recall that $N(v) \cap \beta_M \subseteq \tilde{C}_{v}$ and that $\tilde{C}_v = C_v \cup \left(\bigcup_{u \in N(v) \cap M} N(u) \cap N(v)\right)$. Since $G$ is diamond-free and $K_t$-free, it follows that $|N(v) \cap \beta_M| \leq 2t$.
\end{proof}

Now we prove the main result of this section. 
\begin{theorem}
Let $G \in \mathcal{C}_t^*$, let $w:V(G) \to [0, 1]$ be a weight function on $G$, and let $(\{v_1, \hdots, v_k\},$ $m, M, \tilde{S}_M)$ be the hub division of $G$. Let $\delta_t$ be the hub constant for $t$. Let $\beta_M$ be the central bag for $\tilde{S}_M$ and let $w_M$ be the inherited weight function on $\beta_M$. Assume that $\beta_M$ has a $(w_M, \frac{1}{2})$-balanced separator of size $C$ and that no vertex of $M$ is a pyramid apex in $\beta_M$. Then, $G$ has a $(w, \frac{1}{2})$-balanced separator of size $(2t + 4\delta_t)C$. 
\label{thm:extending-bs}
\end{theorem}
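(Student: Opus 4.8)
The plan is to use Lemma~\ref{lemma:grow-a-separator} to convert the given $(w_M,\frac12)$-balanced separator $X$ of $\beta_M$ into a $(w,\frac12)$-balanced separator $Y$ of $G$, and then to bound $|Y|$. Recall that Lemma~\ref{lemma:grow-a-separator} (applied with $c=\frac12$, using Lemma~\ref{lemma:S_M-smooth} to know that $\tilde S_M$ is a smooth collection) tells us that $Y = X \cup (N[X\cap v(\tilde S_M)]\cap \beta_M)$ is a $(w,\frac12)$-balanced separator of $G$, where $v(\tilde S_M)=M$. So the only thing left is to estimate the size of $Y$ in terms of $C=|X|$.

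First I would bound, for each $v\in X\cap M$, the size of $N_{\beta_M}[v]$. We split $N_{\beta_M}(v)$ into $N_{\beta_M}(v)\cap \Hub(\beta_M)$ and $N_{\beta_M}(v)\setminus\Hub(\beta_M)$. For the first part: by Lemma~\ref{lemma:no-wheels-in-beta} we have $\Hub(\beta_M)\subseteq\{v_m,\dots,v_k\}$, so a neighbor of $v$ lying in $\Hub(\beta_M)$ is a hub vertex $v_j$ with $j\ge m$; by the degeneracy ordering from Lemma~\ref{lemma:degeneracy} (the ordering used to build the hub division) each such $v_j$ has at most $4\delta_t$ neighbors among hub vertices appearing no later than it, which in particular controls the number of hub-neighbors — giving $|N_{\beta_M}(v)\cap\Hub(\beta_M)|\le 4\delta_t$. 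For the second part: since no vertex of $M$ is a pyramid apex in $\beta_M$, Lemma~\ref{lemma:small-nbrs} gives $|N_{\beta_M}(v)\setminus\Hub(\beta_M)|\le 2t$. Hence $|N_{\beta_M}(v)|\le 2t+4\delta_t$ for every $v\in M$, and so $|N_{\beta_M}[v]|\le 2t+4\delta_t+1$; it is cleaner to absorb the $+1$ and simply write $|N_{\beta_M}[v]|\le 2t+4\delta_t$ if the counts already include $v$, or to note the product bound below swallows it.

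Then I would put the pieces together: $|Y|\le |X| + \sum_{v\in X\cap M}|N_{\beta_M}(v)| \le C + C\cdot(2t+4\delta_t-1) \le (2t+4\delta_t)C$, using $C\ge 1$. Combining with Lemma~\ref{lemma:grow-a-separator}, $Y$ is the desired $(w,\frac12)$-balanced separator of $G$ of size at most $(2t+4\delta_t)C$.

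The main obstacle I anticipate is purely bookkeeping rather than conceptual: making sure that the hypotheses of Lemma~\ref{lemma:grow-a-separator} and of Lemma~\ref{lemma:small-nbrs} are all genuinely available here — in particular that $\tilde S_M$ is smooth (Lemma~\ref{lemma:S_M-smooth}), that $w(G)=1$ may be assumed (scaling, or the statement implicitly normalizes), that $G$ has no clique cutset (built into $\mathcal C_t^*$), and that the "no vertex of $M$ is a pyramid apex in $\beta_M$" hypothesis is exactly what Lemma~\ref{lemma:small-nbrs} needs. The one genuine step requiring a small argument is the bound $|N_{\beta_M}(v)\cap\Hub(\beta_M)|\le 4\delta_t$: one must check that the relevant degeneracy inequality from Lemma~\ref{lemma:degeneracy}, which bounds $|N(v_j)\setminus\bigcup_{i<j}T_i|$, together with $\Hub(\beta_M)\subseteq\{v_m,\dots,v_k\}$ and the fact that $v\in M$ lies strictly before index $m$ in the ordering, indeed forces every hub-neighbor of $v$ in $\beta_M$ to be counted by that inequality — i.e. $v$'s hub-neighbors $v_j$ all satisfy $v\in N(v_j)\setminus\bigcup_{i<j}T_i$ because $v$ precedes $v_j$. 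Once that is spelled out, the theorem follows immediately.
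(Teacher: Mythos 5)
Your proposal follows essentially the same route as the paper's proof: apply Lemma~\ref{lemma:grow-a-separator} to $Y = X \cup (N[X\cap M]\cap\beta_M)$, then bound each $|N_{\beta_M}(v)|$ for $v\in X\cap M$ by splitting into the non-hub part (at most $2t$ by Lemma~\ref{lemma:small-nbrs}) and the hub part (at most $4\delta_t$ via Lemma~\ref{lemma:no-wheels-in-beta} and the degeneracy ordering of Lemma~\ref{lemma:degeneracy}). The only differences are cosmetic: you spell out the degeneracy argument (which the paper leaves implicit) and your final count has the same harmless off-by-one bookkeeping as the paper's.
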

\begin{proof}
Let $X$ be a $(w_M, \frac{1}{2})$-balanced separator of $\beta_M$ with $|X| \leq C$. By Lemma \ref{lemma:grow-a-separator}, it follows that $X \cup (N[X \cap M] \cap \beta_M)$ is a $(w, \frac{1}{2})$-balanced separator of $G$. Let $u \in X \cap M$. By Lemma \ref{lemma:small-nbrs}, it follows that $|N_{\beta_M}(u) \setminus \Hub(\beta_M)| \leq 2t$. By Lemma \ref{lemma:no-wheels-in-beta}, $\Hub(\beta_M) \subseteq \{v_m, v_{m+1}, \hdots, v_k\}$, so it follows that $|N_{\beta_M}(u) \cap \Hub(\beta_M)| \leq 4\delta_t$. Therefore, $|X \cup (N[X \cap M] \cap \beta_M)| \leq (2t + 4\delta_t)|X|$. 
\end{proof}

Finally, we restate and prove Theorem \ref{thm:main-nonspecific}. 
\begin{theorem}
\label{thm:C-bdd-tw}
Let $G \in \mathcal{C}_t$. Then, $\tw(G) \leq (4t + 8\delta_t) \cdot \max(R(t, 4) + 1, 6t + 4\delta_t)$. 
\end{theorem}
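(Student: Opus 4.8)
The plan is to chain together Lemma~\ref{lemma:clique-cutsets-tw}, Theorem~\ref{thm:mainthm-betabs}, Theorem~\ref{thm:extending-bs}, and Lemma~\ref{lemma:bs-to-tw}, with essentially no new work. First I would use Lemma~\ref{lemma:clique-cutsets-tw} to reduce to the case where $G$ has no clique cutset: since $\mathcal{C}_t$ is a hereditary class, every induced subgraph $G'$ of $G$ with no clique cutset is again ($C_4$, diamond, theta, pyramid, prism, even wheel, $K_t$)-free, so $G' \in \mathcal{C}_t^*$ and $G'$ is pyramid-free, and it suffices to bound $\tw(G')$ by the claimed quantity. So from now on assume $G$ itself is in $\mathcal{C}_t^*$ and is pyramid-free.

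Next I would fix a weight function $w : V(G) \to [0,1]$ with $w(G) = 1$; by Lemma~\ref{lemma:bs-to-tw} applied with $c = \tfrac12$, it is enough to produce, for each such $w$, a $(w, \tfrac12)$-balanced separator of $G$ of size at most $(2t + 4\delta_t)\cdot\max(R(t,4)+1,\, 6t+4\delta_t)$. I would then form the hub division $(\{v_1,\dots,v_k\}, m, M, \tilde{S}_M)$ of $G$; by Lemma~\ref{lemma:S_M-smooth} the collection $\tilde{S}_M$ is smooth, so the central bag $\beta_M$ and the inherited weight function $w_M$ on $\beta_M$ are well defined. The one point to check carefully is that $\beta_M$, being an induced subgraph of the pyramid-free graph $G$, is itself pyramid-free; in particular no vertex of $M$ is a pyramid apex in $\beta_M$. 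This verifies the hypotheses of both Theorem~\ref{thm:mainthm-betabs} and Theorem~\ref{thm:extending-bs}.

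Now Theorem~\ref{thm:mainthm-betabs} gives that $\beta_M$ has a $(w_M, \tfrac12)$-balanced separator of size at most $C := \max(R(t,4)+1,\, 6t+4\delta_t)$, and then Theorem~\ref{thm:extending-bs} (whose hypotheses---a $(w_M,\tfrac12)$-balanced separator of $\beta_M$ of size $C$ and no pyramid apex of $\beta_M$ in $M$---are exactly what we have just established) yields a $(w, \tfrac12)$-balanced separator of $G$ of size at most $(2t + 4\delta_t)C$. Since $w$ was an arbitrary weight function with $w(G)=1$, Lemma~\ref{lemma:bs-to-tw} gives
\[
\tw(G) \;\le\; \frac{1}{1-\tfrac12}\,(2t+4\delta_t)C \;=\; (4t+8\delta_t)\cdot\max(R(t,4)+1,\,6t+4\delta_t),
\]
as required, and combining this with the clique-cutset reduction of the first step finishes the proof.

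As for the main obstacle: there really is none at this stage---all of the genuine difficulty has been absorbed into Theorems~\ref{thm:mainthm-betabs} and \ref{thm:extending-bs} and the structural lemmas about the hub division. The only things that need attention are the hereditary-class argument used in the clique-cutset reduction, the (easy but essential) observation that pyramid-freeness passes from $G$ to $\beta_M$, and noting that $R(t,4)$, $\delta_t$, and the resulting bound depend only on $t$, so the theorem indeed produces a constant $c_t$ as promised in Theorem~\ref{thm:main-nonspecific}. Finally, Theorem~\ref{thm:main-nonspecific} follows since $\mathcal{C}_t \subseteq \mathcal{C}_t^*$-closure after removing clique cutsets, and Theorem~\ref{thm:ehf-nonspecific} follows as noted after its statement, because thetas, prisms, and even wheels all contain even holes.
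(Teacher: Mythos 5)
Your proposal is correct and follows essentially the same route as the paper: reduce to the clique-cutset-free case via Lemma~\ref{lemma:clique-cutsets-tw}, form the hub division, apply Theorem~\ref{thm:mainthm-betabs} and then Theorem~\ref{thm:extending-bs} to get a $(w,\tfrac12)$-balanced separator of $G$ of size $(2t+4\delta_t)\cdot\max(R(t,4)+1,6t+4\delta_t)$, and finish with Lemma~\ref{lemma:bs-to-tw}. Your explicit check that pyramid-freeness of $G\in\mathcal{C}_t$ passes to $\beta_M$ (so that the hypotheses of both theorems hold) is a detail the paper leaves implicit, but it is the same argument.
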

\begin{proof}
By Lemma \ref{lemma:clique-cutsets-tw}, we may assume that $G$ has no clique cutset, so $G \in \mathcal{C}_t^*$. Let $w:V(G) \to [0, 1]$ be a weight function on $G$ with $w(G) = 1$, and let $(\{v_1, \hdots, v_k\}, m, M, \tilde{S}_M)$ be the hub division of $G$. Let $\beta_M$ be the central bag for $\tilde{S}_M$ and let $w_M$ be the inherited weight function on $\beta_M$. By Theorem \ref{thm:mainthm-betabs}, $\beta_M$ has a $(w_M, \frac{1}{2})$-balanced separator of size at most $\max(R(t, 4) + 1, 6t + 4\delta_t)$. Now, by Theorem \ref{thm:extending-bs}, $G$ has a $(w, \frac{1}{2})$-balanced separator of size $(2t + 4\delta_t) \cdot \max(R(t, 4) + 1, 6t + 4\delta_t)$. Finally, by Lemma \ref{lemma:bs-to-tw}, $\tw(G) \leq (4t + 8\delta_t) \cdot \max(R(t, 4) + 1, 6t + 4\delta_t)$. 
\end{proof}


\begin{thebibliography}{10}
\bibitem{aboulker}
P. Aboulker, I. Adler, E. J. Kim, N. L. D. Sintiari, and N. Trotignon. ``On the tree-width of even-hole-free
graphs.'' {\em arXiv:2008.05504}, (2020).


\bibitem{wallpaper}
T. Abrishami, M. Chudnovsky, C. Dibek, S. Hajebi, P. Rz\k{a}\.{z}ewski, S. Spirkl, and K. Vu\v{s}kovi\'c. ``Induced subgraphs and tree decompositions II. Toward walls and their line graphs in graphs of bounded degree.'' {\em arXiv:2108.01162}, (2021). 

\bibitem{logpaper}
T. Abrishami, M. Chudnovsky, S. Hajebi, and S. Spirkl. ``Induced subgraphs and tree decompositions III. Three-path-configurations and logarithmic treewidth.'' {\em arXiv:2109.01310}, (2021). 

\bibitem{ehf}
T. Abrishami, M. Chudnovsky, and K. Vu\v{s}kovi\'c. ``Induced subgraphs and tree decompositions I. Even-hole-free graphs of bounded degree.'' {\em arXiv:2009.01297}, (2020). 

\bibitem{dynamic-programming}
 H.~L. Bodlaender.
\newblock {``Dynamic programming on graphs with bounded treewidth.''}
\newblock Springer, Berlin, Heidelberg, (1988), pp.~105--118.

\bibitem{clique-cutsets-tw}
 H. Bodlaender and A. Koster. ``Safe separators for treewidth.'' {\em Discrete Mathematics} 306, 3 (2006), 337–350.
 
 \bibitem{cameron-vuskovic}
K. Cameron, M.V. da Silva, S. Huang, and K. Vušković. ``Structure and algorithms for (cap, even hole)-free graphs,''
{\em Discrete Mathematics} 341, 2 (2018), 463-473. 
 
\bibitem{cygan}
M. Cygan, F.V. Fomin, L. Kowalik, D. Lokshtanov, D. Marx, M. Pilipczuk, M. Pilipczuk, and S. Saurabh. {\em Parameterized algorithms}. Springer, (2015).

\bibitem{daSilvaVuskovic}
M. V. da Silva and K. Vušković. ``Decomposition of even-hole-free graphs with star cutsets and 2-joins.''
{\em Journal of Combinatorial Theory. Series B} 103, 1 (2013), 144–183.

\bibitem{params-tied-to-tw}
D.J. Harvey and D.R. Wood. ``Parameters Tied to Treewidth.'' {\em J. Graph Theory} 84 (4) (2017), 364–385.



\bibitem{Korhonen}
  T. Korhonen, ``Grid Induced Minor Theorem for Graphs of Small Degree'',
 {\em arXiv:2203.13233}, (2022).


\bibitem{isk4}
B. Lévêque, F. Maffray and N. Trotignon. ``On graphs with no induced subdivision of $K_4$.'' \textit{J. Combin. Theory Ser. B}, 102 (4) (2012), 924-947. 

\bibitem{RS-GMV}
N. Robertson and P.D.~Seymour. ``Graph minors. V. Excluding a planar graph.'' \textit{J. Combin. Theory Ser. B}, 41 (1) (1996), 92–114.

\bibitem{layered-wheels}
N.L.D. Sintiari and N. Trotignon. ``(Theta, triangle)-free and (even-hole, K4)-free graphs. Part 1: Layered
wheels,'' J. Graph Theory 97 (4) (2021), 475-509.
\end{thebibliography}
\end{document}